\newcommand{\pdfgraphics}{\ifpdf\DeclareGraphicsExtensions{.pdf,.jpg}\else\fi}
\definecolor{hanblue}{rgb}{0.27, 0.42, 0.81}
\definecolor{red}{rgb}{1.0, 0.0, 0.0}
\numberwithin{equation}{section}
\theoremstyle{plain}
\newtheorem{teo}{Theorem}[section]
\newtheorem{lemma}[teo]{Lemma}
\newtheorem{prop}[teo]{Proposition}
\theoremstyle{definition}
\newtheorem{dfnz}[teo]{Definition}
\theoremstyle{remark}
\newtheorem{rem}[teo]{Remark}
\numberwithin{equation}{section}
\def\k{\boldsymbol{\kappa}}
\def\R{\mathbb R}
\newcommand{\e }{\varepsilon }
\newcommand{\ds }{\displaystyle}
\newcommand{\intbar}{\etaathop{\int\etaakebox(-13.5,0){\rule[4pt]{.7em}{0.3pt}}
\kern-6pt}\nolimits}
\newcommand{\be}{\begin{equation}}
\newcommand{\ee}{\end{equation}}
\newcommand{\bea}{\begin{equation*}}
\newcommand{\eea}{\end{equation*}}
\newcommand{\pas}{\partial_s}
\newcommand{\pat}{\partial_t}
\newcommand{\pax}{\partial_x}
\newcommand{\paz}{\partial_z}
\def\R{{{\mathbb R}}}
\def\pol{{\mathfrak{p}}}
\def\E{\mathcal{E}}
\newcommand{\N}{\mathbb{N}}
\def\be{\begin{equation}}
\def\ee{\end{equation}}
\def\bea{\begin{eqnarray*}}
\def\bean{\begin{eqnarray}}
\def\eean{\end{eqnarray}}
\def\eea{\end{eqnarray*}}
\DeclarePairedDelimiter\scal{\langle}{\rangle} 
\newcommand{\de}{{\,\mathrm{d} }} 
\newcommand{\pa}{\partial}
\begin{document}

\pdfgraphics 

\title{Elastic flow of curves with partial free boundary}

\author{Antonia Diana\footnote{Scuola Superiore Meridionale, Largo San Marcellino 10, 80138, Naples, Italy}}
\date{}
\maketitle

\begin{abstract}
\noindent
We consider a curve with boundary points free to move on a line in $\R^2$, which evolves by the $L^2$-gradient flow of the elastic energy, that is, a linear combination of the Willmore and the length functional.
For this planar evolution problem, we study the short and long-time existence.  
 Once we establish under which boundary conditions the PDE's system is well-posed (in our case the Navier boundary conditions), employing the Solonnikov theory for linear parabolic systems in H\"older space, we show that there exists a unique flow in a maximal time interval $[0,T)$. Then, using energy methods we prove that the maximal time is $T= + \infty$.
\end{abstract}

\noindent \textbf{Keywords}: Geometric evolution, elastic energy, parabolic H\"{o}lder spaces, long-–time existence.
\bigskip

\noindent\textbf{Mathematics Subject Classification (2020)}: Primary $53E40$; $35G31$, $35A01$.
\tableofcontents

\section{Introduction}

In this paper, we consider the geometric evolution of a curve
with a partially free boundary.
To be more precise, we consider the gradient flow of the elastic energy under the constraint that 
the boundary points of the curve 
have to remain attached to the $x$-axis.

This paper fits within the broad range of topics on the geometric evolution of curves and surfaces, where the evolution law is dictated by functions of curvature. These topics have recently gained increasing attention from the mathematical community due to their applications to various physical problems and the fascinating challenges they present in analysis and geometry. 

\medskip
The elastic energy of a curve is a linear combination of the $L^2$-norm of its curvature $\boldsymbol{\kappa}$ (also known as one-dimensional Willmore functional) and its weighted length, namely
\begin{equation} 
\mathcal{E}(\gamma)
=\int_{\gamma}\vert\boldsymbol{\kappa}\vert^2
+\mu\de  s 
\end{equation}
where $\mu>0$. 

Before passing to the evolutionary problem, 
we say a few words about the critical points of the energy $\E$, known as {\em elasticae} or {\em elastic curves}. As explained in~\cite{Truesdell83}, elasticae have been studied since the time of Bernoulli and Euler, who used elastic energy as a model for the bending energy of elastic rods. Still later, Born, in his Thesis of $1906$, plotted the first figures of elasticae, using numerical schemes. However, in the last decades, many authors contribute to their classification, for instance, we refer to Langer and Singer~\cite{langersinger1, langersinger2}, Linn\'er~\cite{linner}, Djondjorov et al.~\cite{DjHaMlVa} and Langer and Singer~\cite{langersinger3}, Bevilacqua, Lussardi and Marzocchi~\cite{BevLusMar}, the same authors with Ballarin~\cite{BalBevLusMa}, for the case of a functional which depends both on the curvature and the torsion of the curve.
More recently, Miura and Yoshizawa in a series of papers~\cite{MiYo22, MiYo22-2, MiYo23}, give a complete classification of both clamped and pinned $p$-elasticae.

\medskip
In this paper, we aim to study the $L^2$-gradient flow of $\E$.
To the best of our knowledge, the problem was introduced by Polden in his PhD Thesis~\cite{polden2}, where it is shown that given as initial datum a smooth immersion of the circle in the plane, then there exists a smooth solution to the gradient flow problem for all positive times which sub-converges to an elastica. Then, Dziuk, Kuwert and Sch\"atzle generalized the global existence and sub-convergence result to $\R^n$ and derived an algorithm to treat the flow and compute several numerical examples. 
Later, the evolution of elastic curves has been extended and studied in detail both for closed curves (see for instance~\cite{kuschatdz,ManPoz,polden2,Poz22}) as well as for open curves with Navier boundary conditions in~\cite{NoOk14,NoOk17} and clamped boundary conditions in~\cite{DaPoSp16, NoOk17,Ch12,Sp17}.
We also recall that a slightly different problem was tackled, among others, by Wen in~\cite{wen2} and by Rupp and Spener in~\cite{Rupp2020ExistenceAC}, where the authors analyzed the elastic flow of curves with a nonzero rotation index and clamped boundary conditions respectively, which are in both cases subject to fix length and in~\cite{Koiso96,Ok07,Ok08} where a variety of constraints are considered. For the sake of completeness, we also mention that the $L^2$-gradient flow of $\int_{\gamma}\vert\boldsymbol{\kappa}\vert^2 \de  s $ for curve subjected to fix length is studied in~\cite{dalipo1, dalipo2,kuschatdz}, indeed other fourth (or higher) order flows are analyzed, for instance, in~\cite{DaPo14,Wh15,AbBu19, AbBu20,McWhWu17,McWhWu19,WhWh}.
Finally, we mention the survey~\cite{ManPluPozSurvey} for a complete review of the literature and we recommend all the references therein.

\medskip
As already said, 
in this paper, we let evolve a curve supposing that 
it remains attached to the $x$-axis.
To derive the flow, we start by writing
the associated Euler-Lagrange equations and in particular we find suitable ``natural'' boundary conditions for this problem (these boundary conditions
are known in the literature as Navier conditions).
We thus get that the evolution
can be described by solutions of a system
of quasilinear fourth order with boundary conditions in~\eqref{navierbc}, namely the attachment condition, second and third order conditions.
We then introduce a class of 
admissible initial curves
of class $C^{4+\alpha}$ with $\alpha \in  (0,1)$
which needs to be non-degenerate, in the sense that the $y$-component of the unit tangent vector must be positive at boundary points, and satisfy (in addition to the conditions mentioned above) an extra fourth order condition (see Definition~\ref{admissinitialgeom}).

Then, we establish well-posedness of the flow. More precisely, starting with a (geometrically) admissible initial curve  we prove in Theorem~\ref{geomexistence} that 
there exists a unique (up to reparametrization) solution to the flow 
in a small time interval $[0,T]$ with $T>0$, 
that can be described by a parametrization of class $C^{{\frac{4+\alpha}{4}},4} ([0, T ] \times [0, 1])$.

To do so, we choose a specific tangential velocity turning the system~\eqref{evolutionlaw} into a non-degenerate parabolic boundary value problem without changing the geometric nature of the evolution (namely the {\em analytic problem}~\eqref{analyticprobl}). Then, we solve the {\em analytic problem} using a linearization procedure and a fixed point argument. The main difficulty is actually to solve the associated {\em linear system}~\eqref{linearprobl}, coupled with extra compatibility conditions (see Definition~\ref{lincompcond}), employing the Solonnikov theory for linear parabolic systems in H\"older space introduced in~\cite{solonnikov1}, as it is shown in Theorem~\ref{linearexistence}.

Once we have a solution for the {\em analytic problem}, the key point is to ensure that solving~\eqref{analyticprobl} is enough to obtain a unique solution to the original {\em geometric problem}. This is shown in Theorem~\ref{geomexistence}, following the approach presented in~\cite{garkcoh} and later in~\cite{GaMePl1}.

The second natural step is trying to understand the long-time behavior of the evolving curves. This leads to our main result.

\begin{teo}\label{mainthm}
 Let $\gamma_0$ be a geometrically admissible initial curve and $\gamma_t$ be a solution to the elastic flow with initial datum $\gamma_0$ in the maximal time interval $[0,T)$ with $T \in (0, \infty) \cup \{\infty\}$. Then, up to reparametrization and translation of $\gamma_t$, it follows
$$T= \infty$$
or at least one of the following holds
\begin{itemize}
\item the inferior limit of the length of $\gamma_t$ is zero as $ t \to T$;
\item the inferior limit of the $y$-component of the unit tangent vector at the boundary is zero as $ t \to T$.
\end{itemize}
\end{teo}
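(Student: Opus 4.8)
The plan is to argue by contradiction. Since the assertion is the disjunction ``$T=\infty$ or (one of the two degeneracies)'', I assume $T<\infty$ and that \emph{both} alternatives fail, that is
$$\liminf_{t\to T} L(\gamma_t) > 0 \qquad\text{and}\qquad \liminf_{t\to T}\big(\text{$y$-component of the unit tangent at the boundary}\big) > 0.$$
Because these are $\liminf$'s taken as $t\to T$, each gives a uniform lower bound by some $\delta>0$ on an interval $(t_0,T)$, and by continuity on the compact set $[0,t_0]$ the bound can be taken uniform on all of $[0,T)$ (after shrinking $\delta$). The goal is then to show that these two lower bounds, combined with the energy estimates below, force $\gamma_t$ to converge in $C^{4+\alpha}$ as $t\to T$ to a geometrically admissible curve $\gamma_T$, and to restart the flow from $\gamma_T$ via the short-time existence result, Theorem~\ref{geomexistence}, contradicting the maximality of $T$.

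The backbone of the argument is a priori control of the geometry by energy methods. Since the evolution is the $L^2$-gradient flow of $\E$ and the Navier conditions~\eqref{navierbc} are precisely the natural boundary conditions that annihilate the boundary terms in the first variation, the energy is non-increasing, $\frac{d}{dt}\E(\gamma_t)\le 0$, whence $\E(\gamma_t)\le \E(\gamma_0)$ for all $t$. As $\E(\gamma)=\int_\gamma|\k|^2\de s+\mu L(\gamma)$ and both summands are nonnegative, this yields simultaneously the upper length bound $\mu\,L(\gamma_t)\le\E(\gamma_0)$ and the uniform curvature bound $\int_{\gamma_t}|\k|^2\de s\le\E(\gamma_0)$. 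Together with the assumed \emph{lower} length bound, the curve is thereby confined to a fixed scale; after translating each $\gamma_t$ (permissible, as the boundary points slide along the $x$-axis) so as to fix one endpoint at the origin, the curves remain in a bounded region of $\R^2$, which is what makes the extraction of a limit possible.

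The heart of the matter is to bootstrap from the $L^2$-bound on $\k$ to uniform bounds on all arclength derivatives $\pas^m\k$. The standard device is to differentiate the integrals $\int_{\gamma_t}|\pas^m\k|^2\de s$ in time; using the parabolic structure of~\eqref{evolutionlaw} one arrives at differential inequalities of the schematic form
\begin{equation*}
\frac{d}{dt}\int_{\gamma_t}|\pas^m\k|^2\de s+\int_{\gamma_t}|\pas^{m+2}\k|^2\de s\le C_m+\big(\text{boundary terms at the two endpoints}\big),
\end{equation*}
where $C_m$ depends only on $\E(\gamma_0)$, the lower length bound, and lower-order curvature norms, and which one closes using the Gagliardo--Nirenberg interpolation inequalities on curves (valid because the length is bounded below), exactly as in the closed-curve theory (\cite{kuschatdz}) and the treatment in the survey~\cite{ManPluPozSurvey}. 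I expect the genuinely delicate step, and the place where the free-boundary nature of the problem must be faced head-on, to be the control of the \emph{boundary terms} generated by the repeated integration by parts: each integration by parts in arclength produces a contribution at the endpoints, and these must be shown either to be controlled by interior quantities or to vanish by virtue of the Navier conditions and their differentiated consequences along the flow. It is precisely here that the non-degeneracy hypothesis enters, since the relations coming from the attachment constraint that tame these endpoint terms degenerate as the tangent becomes horizontal; the uniform lower bound on the $y$-component of the unit tangent keeps them non-singular.

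Granting these uniform estimates, I would conclude as follows. The bounds on $\int_{\gamma_t}|\pas^m\k|^2\de s$ for all $m$, together with the two-sided length control, give via Sobolev embedding uniform $C^k$-bounds on the curvature, and hence on a suitable reparametrization of $\gamma_t$, uniformly on $[0,T)$; parabolic smoothing then upgrades these to $C^{4+\alpha}$ convergence of $\gamma_t$ to a limit curve $\gamma_T$ as $t\to T$. The uniform lower bound on the $y$-component of the tangent guarantees that $\gamma_T$ is still non-degenerate, and the boundary conditions --- including the extra fourth-order compatibility condition of Definition~\ref{admissinitialgeom}, which is preserved along the flow --- pass to the limit, so that $\gamma_T$ is a geometrically admissible initial curve. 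Applying Theorem~\ref{geomexistence} with initial datum $\gamma_T$ produces a solution on $[T,T+\tau]$ for some $\tau>0$, which glues with $\gamma_t$ into a solution on $[0,T+\tau)$ and contradicts the maximality of $T$. Hence, if $T<\infty$, at least one of the two degeneracies must occur, which is the claim.
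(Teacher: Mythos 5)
Your skeleton --- contradiction, energy monotonicity giving the $L^2$-bound on $\k$ and the upper length bound, translation/reparametrization and compactness, restart via short-time existence --- is exactly the paper's, but the core analytic step is where your plan has a genuine gap. You propose to bound $\int_{\gamma_t}|\pas^m\k|^2$ for \emph{all} $m$ by the standard device, asserting that the endpoint terms produced by the integrations by parts can be handled ``exactly as in the closed-curve theory'' via the Navier conditions and their differentiated consequences. In the closed-curve theory there are no boundary terms, so that reference supplies no mechanism; and in fact no such mechanism exists at general order. Differentiating $\int|\pas^m k|^2$ in time and integrating by parts produces the good term $-\int|\pas^{m+2}k|^2$ together with endpoint contributions of the schematic form $\pas^{m}k\,\pas^{m+3}k\big|_0^1$ and $\pas^{m+1}k\,\pas^{m+2}k\big|_0^1$. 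Endpoint factors of order up to $m+1$ can be absorbed by interpolation against $\|\pas^{m+2}k\|_{L^2}$ (this is the content of the trace-type estimates invoked in the paper), but factors of order $m+2$ and $m+3$ cannot; they must be rewritten using the boundary conditions. The only available relations are the three Navier conditions and their time derivatives, and since each time differentiation raises the order by four, they reach only derivatives of order $\equiv 0,1,2 \pmod 4$ at the endpoints (from $k=0$, the third-order condition, and the attachment condition, respectively); orders $\equiv 3\pmod 4$ are inaccessible. Consequently the scheme closes only for special values of $m$ (it fails already at $m=1$ and $m=4$), and this is precisely why the paper proves exactly two estimates: $m=2$ (Proposition~\ref{propNavierj=2}, where $\pas^4 k$ and $\pas^5 k$ at the endpoints are eliminated through the time-differentiated conditions~\eqref{j=2Navier1} and~\eqref{j=2Navier2}), and a sixth-order estimate obtained not directly but through $\int|\pat^\perp v|^2$ with $v=\pat\gamma$ (Propositions~\ref{propNavierJ=6} and~\ref{propfinaleNavierJ=6}), whose worst boundary term $\pas^9k$ is reachable --- order nine being $\equiv 1 \pmod 4$ --- by differentiating the third-order condition twice in time (Lemma~\ref{lemmabt2}). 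All intermediate orders are then recovered by interpolation, which is all the compactness argument needs.

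A second, related omission: to write the evolution of these integrals at all one must fix a tangential velocity, and the attachment constraint forces $\Lambda(y)=2\pas^2k(y)\,\nu_2(y)/\tau_2(y)$ at the endpoints --- this is where the uniform non-degeneracy enters (Lemma~\ref{tangent}), and you identified that role correctly --- so $\Lambda$ cannot be taken to vanish. The paper then \emph{chooses} $\Lambda$ in the interior as the linear interpolation~\eqref{Lambda} of its forced boundary values, which is what makes the interior bounds on $\Lambda$, $\pas\Lambda$, $\pat\Lambda$ (Lemma~\ref{stimelambda}) and the polynomial bookkeeping (Lemmas~\ref{Tpol} and~\ref{stimeintegraliebordo}) possible. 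Your proposal never specifies this gauge, so the differential inequalities you write down are not yet well defined. The concluding compactness/restart paragraph is fine in outline (the paper uses constant-speed reparametrization, Ascoli--Arzel\`a, and smoothness for positive times, rather than $C^{4+\alpha}$ smoothing up to $t=T$), but as it stands your proof rests on a family of estimates that, at general order, cannot be closed.
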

Even though the structure of the proof of this result is based on a contradiction argument
already present in the literature 
(see for instance~\cite{polden2, kuschatdz, ManPluPozSurvey, DaChPo19,GaMePl2}) this is the most technical 
part of the paper and it contains relevant novelties.
\\We find energy type inequalities, more precisely bounds on the $L^2$-norm of the second and sixth derivative of the curvature, which leads to contradicting the finiteness of $T$.
Those estimates, which involved the smallest number of derivatives, can be derived under the assumption that during the evolution the length is uniformly bounded away from zero and that the curve remains non-degenerate in a uniform sense (see Definition~\ref{defunifnondeg}). 
\\Moreover, we underline that only estimates for geometric quantities, namely the curvature, are needed. In particular, the proof itself is independent of the choice of tangential velocity which corresponds to the very definition of the flow, where only the normal velocity is prescribed. For this reason, following~\cite{DaChPo19}, we reparametrize the flow in such a way that the tangential velocity linearly interpolates its values at boundary points (see condition~\eqref{Lambda}) and such that suitable estimates both inside and at boundary points hold. 
With this choice and the uniform bounds for the curvature, we can extend the flow smoothly up to the time $T$ given by the short-time existence result and then restart the flow, contradicting the maximality of $T$.
\\In short, our approach combines the one presented in~\cite{DaChPo19} and the other in~\cite{GaMePl2}, in the sense that we choose a tangential velocity as explained above and we use the minimum number of derivatives (and hence of estimates) which are needed to conclude the proof of Theorem~\ref{mainthm}.

\medskip
This work is organized as follows: in the next section we formulate the geometric evolution problem for elastic curves and we show that those curves decrease the energy $\E$. In Section~$3$ we show short-time existence of a unique smooth solution using the Solonnikov theory and a contraction argument. 
We also show geometric uniqueness. In the final Section~$5$, we prove the long-time existence result using the curvature bounds provided in Section~$4$.

\section{The elastic flow}

\subsection{Preliminary definitions and notation}

A regular curve is a continuous map $\gamma : [a,b] \to \R^2$ which is differentiable on $(a,b)$ and such that $\vert \pax \gamma \vert$ never vanishes on $(a,b)$. Without loss of generality, from now on we consider $[a,b]=[0,1]$. 

We denote by $s$ the arclength parameter, 
then $\pas:=\frac{1}{\vert \pax \gamma \vert}\pax $ and $\de s:= \vert \pax \gamma \vert \de x$ are the derivative and the measure with respect to the arclength parameter of the curve $\gamma$, respectively. 
\medskip \\{\em From now on, we will pass to the arclength parametrization of the curves without further comments.} 
\medskip\\If we assume that $\gamma$ is a regular 
planar curve of class at least $C^1$, 
we can define the unit tangent vector $\tau=\vert \pax \gamma \vert^{-1} \pax \gamma$ and the unit normal vector $\nu$ as the anticlockwise rotation by $\pi/2$ of the unit tangent vector. 
\medskip\\We introduce the operator $\partial_s^\perp$ 
that acts on vector fields $\varphi$
defined as the normal component of $\partial_s\varphi$ along the curve $\gamma$, that is
$\partial_s^\perp\varphi=\partial_s\varphi
-\left\langle \partial_s\varphi,\partial_s\gamma\right\rangle\partial_s\gamma$. Moreover, for any vector $\psi(\cdot) \in \R^2$, we use the notation ($\psi(\cdot)_1$,$\psi(\cdot)_2)$ to denote the projection on the $x$-axis and $y$-axis, respectively.
\medskip\\ Let $\mu>0$. Assuming that $\gamma$ is of class $H^2$, we denote by $\boldsymbol{\kappa} = \pas \tau$ the curvature vector and we define the {\em elastic energy with a length penalization}
\begin{equation} 
\mathcal{E}(\gamma)
=\int_{\gamma}\vert\boldsymbol{\kappa}\vert^2
+\mu\de  s \, .
\end{equation}
Denoting by $k$ the oriented curvature, by means of relation $\boldsymbol{\kappa}= k \nu$ which holds in $\R^2$, the energy functional can be equivalently written as
\be\label{energy}
\E(\gamma)=\int_{\gamma}k^2+\mu\de  s \, .
\ee

\subsection{Formal derivation of the flow}
Let $\gamma:[0,1]\to\mathbb{R}^2$ be a regular curve
of class $H^2$.
 We consider a variation $\gamma_\varepsilon=\gamma+\varepsilon\psi$ with $\varepsilon\in\mathbb{R}$ and $\psi:[0,1]\to\mathbb{R}^2$ of class $H^2$, which is regular whenever $\vert \varepsilon\vert$ is small enough.
By direct computations (see~\cite{ManPoz}, for instance) we get the {\em first variation} of $\E$, that is
\begin{align}
 \frac{d}{d\varepsilon}\mathcal{E}(\gamma_\varepsilon)\Big\vert_{\varepsilon=0}
 & =\int_{\gamma} 2 \langle \boldsymbol{\kappa},
 \partial^2_{s} \psi \rangle \de  s   
 + \int_{\gamma} (-3 |\boldsymbol{\kappa}|^2+\mu)
\left\langle \tau,\partial_s\psi\right\rangle \de  s\,.\label{primostep}
\end{align}
We say that a regular curve $\gamma$ of class $H^2$ is a {\em critical} point of $\E$ if for any $\psi$ its first variation vanishes. 

\begin{lemma}[Euler-Lagrange equations]
Let $\gamma:[0,1]\to\mathbb{R}^2$ be a critical point of $\E$ parametrized proportional to arclength.
Then, $\gamma$ is smooth and satisfies 
\be 2(\partial^\perp_s)^2 \boldsymbol{\kappa}+ |\boldsymbol{\kappa}|^2 \boldsymbol{\kappa} 
 -\mu \boldsymbol{\kappa}\, =0 
 \ee
in $(0,1)$. Moreover, if the 
endpoints are
constrained to the $x$-axis, the following 
Navier boundary conditions are fulfilled
\begin{align}
\begin{cases}\label{navierbcincomplete}
k(y)=0 &\text{curvature or second order conditions}
\\
\left(-2\pas k(y) \nu(y)
+\mu\tau(y)\right)_1 =0 &\text{third order conditions}
\end{cases}
\end{align}
for $y\in\{0,1\}$.
\end{lemma}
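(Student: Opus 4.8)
The plan is to start from the first variation formula \eqref{primostep} and integrate by parts in arclength so as to transfer all derivatives off the test field $\psi$, splitting the first variation into an interior bulk integral and boundary contributions at $s\in\{0,1\}$. Concretely, I would integrate the first summand $\int_\gamma 2\langle\boldsymbol{\kappa},\partial_s^2\psi\rangle\de s$ by parts twice and the second summand $\int_\gamma(-3|\boldsymbol{\kappa}|^2+\mu)\langle\tau,\partial_s\psi\rangle\de s$ once; the proportional-to-arclength parametrization is precisely what makes $\partial_s\,\de s=\partial_x\,\de x$, so these integrations by parts (and the Frenet relations below) are legitimate. Collecting the interior integrand as $\langle 2\partial_s^2\boldsymbol{\kappa}-\partial_s((-3|\boldsymbol{\kappa}|^2+\mu)\tau),\psi\rangle$ and testing against $\psi$ compactly supported in $(0,1)$ forces the vector equation $2\partial_s^2\boldsymbol{\kappa}-\partial_s((-3|\boldsymbol{\kappa}|^2+\mu)\tau)=0$ on $(0,1)$.

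Next I would rewrite this vector identity in the Frenet frame. Using $\boldsymbol{\kappa}=k\nu$, $\pas\tau=k\nu$, $\pas\nu=-k\tau$, a direct computation gives $\partial_s^2\boldsymbol{\kappa}=(\partial_s^2 k-k^3)\nu-3k(\partial_s k)\tau$ and $\partial_s((-3k^2+\mu)\tau)=-6k(\partial_s k)\tau+(-3k^3+\mu k)\nu$; subtracting, the tangential parts cancel identically (reflecting the reparametrization invariance of $\E$) and the normal part is $(2\partial_s^2 k+k^3-\mu k)\nu$. Since the same relations yield $(\partial_s^\perp)^2\boldsymbol{\kappa}=(\partial_s^2 k)\nu$, this is exactly $2(\partial_s^\perp)^2\boldsymbol{\kappa}+|\boldsymbol{\kappa}|^2\boldsymbol{\kappa}-\mu\boldsymbol{\kappa}=0$, i.e. the scalar equation $2\partial_s^2 k+k^3-\mu k=0$. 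Smoothness then follows by a standard bootstrap: the weak form of this fourth-order equation together with the one-dimensional embedding $H^2\hookrightarrow C^{1,1/2}$ lets me iteratively upgrade the regularity of $k$, hence of $\gamma$, until $\gamma\in C^\infty$.

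For the boundary conditions I would return to the boundary terms produced in the first step, namely the requirement $[\,2\langle\boldsymbol{\kappa},\partial_s\psi\rangle-2\langle\partial_s\boldsymbol{\kappa},\psi\rangle+(-3k^2+\mu)\langle\tau,\psi\rangle\,]_0^1=0$, which must now hold for every \emph{admissible} variation. The crucial observation is that keeping the endpoints on the $x$-axis constrains $\psi$ only through $\psi(y)_2=0$ at $y\in\{0,1\}$ (the endpoints may slide horizontally), whereas the absence of clamping leaves $\partial_s\psi(y)$ entirely free. Choosing first $\psi$ that vanishes at both endpoints with $\partial_s\psi(y)$ arbitrary isolates $[\,2\langle\boldsymbol{\kappa},\partial_s\psi\rangle\,]_0^1=0$ and, by independence of the two endpoints, yields $\boldsymbol{\kappa}(y)=0$, i.e. the second-order condition $k(y)=0$. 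Substituting $k(y)=0$ annihilates the first bracket term and reduces $\partial_s\boldsymbol{\kappa}(y)$ to $(\partial_s k)(y)\nu(y)$, so the remaining requirement becomes $\langle -2(\partial_s k)(y)\nu(y)+\mu\tau(y),\psi(y)\rangle=0$ at each endpoint separately; since admissibility forces $\psi(y)$ to be parallel to $(1,0)$, this is equivalent to the vanishing of the first component, giving the third-order condition $\left(-2\pas k(y)\nu(y)+\mu\tau(y)\right)_1=0$.

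The main difficulty I anticipate is bookkeeping rather than deep analysis: correctly pinning down the admissible class of variations (only the vertical component of $\psi$ is fixed at the endpoints, its arclength derivative being free) and then selecting test fields that cleanly decouple the second-order and third-order families of boundary conditions. The one genuinely analytic point is the regularity bootstrap promoting an a priori $H^2$ critical point to a smooth curve, which is nonetheless routine for an elliptic fourth-order equation of this type.
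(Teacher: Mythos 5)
Your proposal is correct and follows essentially the same route as the paper: vanishing of the first variation \eqref{primostep}, integration by parts to separate bulk and boundary contributions, and the admissible class $\psi(y)_2=0$ with $\partial_s\psi(y)$ free, used first to force $\boldsymbol{\kappa}(y)=0$ and then, with $\psi(y)$ an arbitrary horizontal vector, to extract $\left(-2\partial_s k(y)\nu(y)+\mu\tau(y)\right)_1=0$. The only point to watch is logical order: since $\gamma$ is a priori only $H^2$, moving derivatives onto $\boldsymbol{\kappa}$ and evaluating $\partial_s\boldsymbol{\kappa}$ at the endpoints is legitimate only after the regularity bootstrap --- the paper settles this by proving smoothness first (Proposition~\ref{propregularitycriticalpoint}) --- and your ordering accommodates this provided the interior equation is first read distributionally (which the vanishing of the first variation against compactly supported $\psi$ gives directly, with no integration by parts on $\boldsymbol{\kappa}$ needed).
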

\begin{proof}
By a standard bootstrap argument, one can show that critical points of $\E$ are actually smooth 
(for the reader's convenience a proof of this fact
is given in Proposition~\ref{propregularitycriticalpoint} in the appendix).
Hence, integrating 
by parts the expression~\eqref{primostep}, we have
\begin{align}
 \frac{d}{d\varepsilon}\mathcal{E}(\gamma_\varepsilon) \Big\vert_{\e=0}
 =&
 \int_{\gamma}  \left\langle
 2 (\partial^\perp_s)^2 \boldsymbol{\kappa}+ |\boldsymbol{\kappa}|^2 \boldsymbol{\kappa} 
 -\mu \boldsymbol{\kappa},
\psi\right\rangle \de  s\nonumber\\
 +& 2 \left. \langle \boldsymbol{\kappa},
 \partial_s \psi\rangle \right|_0^1 
+  \left. \langle -2\partial^\perp_s \boldsymbol{\kappa}
- |\boldsymbol{\kappa}|^2\tau+\mu\tau, \psi\rangle \right|_0^1 \, . \label{firstvarBC} 
 \end{align} 
 Since $\gamma$ is critical, from formula~\eqref{firstvarBC} we immediately get  
 \be 2(\partial^\perp_s)^2 \boldsymbol{\kappa}+ |\boldsymbol{\kappa}|^2 \boldsymbol{\kappa} 
 -\mu \boldsymbol{\kappa}\, =0 
 \ee
and
 \be  2 \left. \langle \boldsymbol{\kappa},
 \partial_s \psi\rangle \right|_0^1 
+  \left. \langle -2\partial^\perp_s \boldsymbol{\kappa}
- |\boldsymbol{\kappa}|^2\tau+\mu\tau, \psi\rangle \right|_0^1=0 \, .\label{criticalpointbc}
\ee
We now recall that
\be\label{pasperpk}
\pas^\perp \boldsymbol{\kappa}=\pas \boldsymbol{\kappa}+ \vert \boldsymbol{\kappa} \vert^2 \tau \, .
\ee
Hence, from $\boldsymbol{\kappa}= k \nu$ and the Serret-Frenet equation in the plane, that is 
\be\label{SerretFrenet}
\pas \nu = - k \tau\,,
\ee 
the boundary terms in~\eqref{criticalpointbc} reduce to
 \be 2 \left. \langle k\nu,
 \partial_s \psi\rangle \right|_0^1 + \left \langle -2\partial_s k \nu- k^2 \tau+ \mu \tau, \psi \rangle \right |_0^1\,.\label{boundarypart}
\ee
The fact that 
 the endpoints must remain attached to the $x$-axis
 affects the class of test functions:
we can only consider variations $\gamma_\varepsilon=\gamma+\varepsilon\psi$
with 
\be\label{psiconstraint}
\psi(0)_2=\psi(1)_2=0\,.
\ee
Now,
letting first $\psi(0)_1 =\psi(1)_1= 0$, it remains the boundary term
$$2 \left. \langle k\nu,
 \partial_s \psi\rangle \right|_0^1=0\, ,$$
where the test functions $\psi$ appear differentiated. So, we can choose a test function $\psi$ such that 
\be
\pa_s \psi(0) = \nu (0) \qquad \text{and} \qquad \pa_s \psi(1)=0
\ee 
and we get $k(0)=0$. Then, interchanging the role of $\pa_s \psi(0)$ and $\pa_s \psi(1)$, we have $k(1)=0$.
\\It remains to consider the last term 
$$
\left \langle -2\partial_s k \nu- k^2 \tau+ \mu \tau, \psi \rangle \right |_0^1=0\, .$$
Taking into account the condition $k(0)=k(1)=0$, by arbitrariness of $\psi$
the term  is zero if
\be \left(-2\pas k(y) \nu(y)
+\mu\tau(y)\right)_1 =0
\ee
for $y \in \{0,1\}$.
\end{proof}

The previous lemma allows us to formally define the elastic flow of a curve with endpoints
constrained to the $x$-axis coupling the motion equation
  \be \label{motioneqkvett}
  \partial_t\gamma =-2(\partial^\perp_s)^2 \boldsymbol{\kappa}- |\boldsymbol{\kappa}|^2 \boldsymbol{\kappa} 
 +\mu \boldsymbol{\kappa} \,,
 \ee
 with the following 
Navier boundary conditions
 \begin{align}\label{navierbc}
 \begin{cases}
\gamma(y)_2=0 \quad&\text{attachment conditions}
\\k(y)=0 &\text{curvature or second order conditions}
\\\left(-2\pas k(y) \nu(y)
+\mu\tau(y)\right)_1 =0 &\text{third order conditions}
\end{cases}
\end{align}
for $y \in \{0,1\}$.

\subsection{Definition of the geometric problem}
In this section, we briefly introduce the parabolic H\"{o}lder spaces (see~\cite{solonnikov1} for more details).
\\Given a function $u:[0,T]\times [0,1]\to\mathbb{R}$, for $\rho\in (0,1)$ we define the semi-norms 
$$
[ u]_{\rho,0}:=\sup_{(t,x), (\tau,x)}\frac{\vert u(t,x)-u(\tau,x)\vert}{\vert t-\tau\vert^\rho}\,,
$$
and
$$
[ u]_{0,\rho}:=\sup_{(t,x), (t,y)}\frac{\vert u(t,x)-u(t,y)\vert}{\vert x-y\vert^\rho}\,.
$$
Then, for $l\in \{0,1,2,3,4\}$ and $\alpha\in (0,1)$, the parabolic H\"older space
$$
C^{\frac{l+\alpha}{4}, l+\alpha}([0,T]\times[0,1])
$$
is the space 
of all functions $u:[0,T]\times [0,1]\to\mathbb{R}$ that have continuous derivatives $\partial_t ^i\partial_x^ju$ where $i,j\in\mathbb{N}$ are such that $4i+j\leq l$ for which the norm
\begin{equation*}
\left\lVert u\right\rVert_{{\frac{l+\alpha}{4},l+\alpha}}:=\sum_{4i+j=0}^l\left\lVert\partial_t ^i\partial_x^ju\right\rVert_\infty
+\sum_{4i+j=l}\left[\partial_t ^i\partial_x^ju\right]_{0,\alpha}+\sum_{0<l+\alpha-4i-j<4}\left[\partial_t ^i\partial_x^ju\right]_{\frac{l+\alpha-4i-j}{4},0}
\end{equation*}
is finite. Moreover, the space $C^{\frac{\alpha}{4},\alpha}\left([0,T]\times[0,1]\right)$ coincides with the space 
$$
C^{\frac{\alpha}{4}}\left([0,T];C^0([0,1])\right)\cap C^0\left([0,T];C^\alpha([0,1])\right)\,,
$$
with equivalent norms.




\begin{dfnz}[Admissible initial curve]\label{admissinitialgeom}
    A regular curve $\gamma_0:[0,1] \to \R^2$ is an {\em admissible initial curve} for the elastic flow if 
    \begin{enumerate}
        \item it admits a parametrization which belongs to $C^{4+\alpha}([0,1], \R^2)$ for some $\alpha \in (0,1)$;
        \item it satisfies the Navier boundary conditions in~\eqref{navierbc}: attachment, curvature and third order conditions;
        \item  it satisfies the {\em non-degeneracy condition}, that is, there exists $\rho > 0$ such that 
        \be\label{nondeg} 
        (\tau_0(y))_2 \geq \rho \qquad \text{for $y \in \{0,1\} \, .$}
        \ee
        \item it satisfies the following {\em fourth order condition}
        \be ((-2\partial_s^2k_0(y)-k_0^3(y)+k_0(y))\nu_0(y))_2=0 \qquad \text{for $y \in \{0,1\} \, .$}
        \ee
    \end{enumerate}
    \end{dfnz}
\begin{dfnz}[Solution  of the geometric problem]\label{Def:elasticflow}
Let $\gamma_0$ be an admissible initial curve as in Definition~\ref{admissinitialgeom}
and $T>0$. 
A time-dependent family of curves
$\gamma_t$ for ${t\in [0,T]}$ is a solution to the \emph{elastic flow} with initial datum $\gamma_0$ in the maximal time interval $[0,T]$,
if there exists a parametrization 
$$
\gamma(t,x)\in 
C^{\frac{4+\alpha}{4}, 4+\alpha}\left([0,T]\times [0,1],\mathbb{R}^2\right)\,,
$$
with $\gamma$ regular
and such that for every $t\in [0,T], x\in [0,1]$ the system 
\begin{equation}
\begin{cases}
(\partial_t\gamma)^\perp=\left(-2\partial_s^2k-k^3+ \mu k\right)\nu\\
\gamma(0,x)=\gamma_0(x)\,,
\end{cases}\label{evolutionlaw}
\end{equation}
 coupled with boundary conditions~\eqref{navierbc}, is satisfied.
\end{dfnz}
\begin{rem}
The motion equation in~\eqref{evolutionlaw} follows from~\eqref{motioneqkvett}, using Serret-Frenet equation~\eqref{SerretFrenet} and recalling that 
\be
(\pas^\perp)^2 \boldsymbol{\kappa}= \pas^2 \boldsymbol{\kappa}+3 \left\langle \pas \boldsymbol{\kappa}, \boldsymbol{\kappa} \right \rangle \tau + \vert \boldsymbol{\kappa} \vert^2 \boldsymbol{\kappa} \, .
\ee
\end{rem}
\begin{rem}
    Observe that the formulation of the problem given so far involves purely geometric
quantities and hence it is invariant under reparametrizations.
Thus, given a solution $\gamma$ of~\eqref{evolutionlaw}, any reparametrization of $\gamma$ still satisfies system~\eqref{evolutionlaw}.
\end{rem}

\begin{rem}
As the authors pointed out in~\cite{ManPluPozSurvey}, in system~\eqref{evolutionlaw} only the normal component of the velocity is prescribed. This does not mean that the tangential velocity is necessarily zero. Indeed, we can equivalently write the motion equations as
    \begin{equation}\label{motionequationtang}
\partial_t\gamma=V\nu+\Lambda\tau\,,
\end{equation}
where $V=-2\partial_ s^2 k-k^3+\mu k$ and $\Lambda$ is some at least continuous function. 
\end{rem}

\subsection{Energy monotonicity}

In Proposition~\ref{energydecreases} we show that the energy of an evolving curve
decreases in time, adapting the proof of~\cite[Proposition~2.20]{ManPluPozSurvey}.

\begin{lemma}\label{evoluzionigeom}
If $\gamma$ satisfies~\eqref{motionequationtang}, the commutation rule
\begin{equation*}
\partial_{t}\partial_{s}=\partial_{s}\partial_{t}+\left(kV-\partial_s \Lambda\right)\partial_{s}\,
\end{equation*}
holds and the measure $\de s$ evolves as
\begin{equation}\label{metricaevol}
\partial_t(\de s)=\left(\partial_s \Lambda-kV\right)\de s\,.
\end{equation}
Moreover the unit tangent vector, unit normal vector, and the 
$j$-th
derivatives of scalar curvature of $\gamma$ satisfy
\begin{align}
\partial_{t}\tau&=\left(\partial_s V+\Lambda k\right)\nu\,,\label{pattau}\\
\partial_{t}\nu&=-\left(\partial_s V+\Lambda k\right)\tau\,,\label{patnu}\\
\partial_tk&=\left\langle \partial_{t}\boldsymbol{\kappa},\nu\right\rangle =\partial_s^2 V+\Lambda\partial_s k+k^{2}V\,\nonumber\\
&=-2\partial_{s}^{4}k-5k^{2}\partial_{s}^{2}k-6k\left(\partial_{s}k\right)^{2}
+\Lambda\partial_{s}k-k^{5}+\mu \left( \partial_{s}^{2}k+k^3\right)\,,\label{kt}
\end{align}
\end{lemma}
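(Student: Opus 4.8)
The plan is to reduce everything to two ingredients: the Serret–Frenet relations $\partial_s\tau=\boldsymbol{\kappa}=k\nu$ and $\partial_s\nu=-k\tau$, together with the time-derivative of the length element $|\partial_x\gamma|$. Since $\partial_s=|\partial_x\gamma|^{-1}\partial_x$, all the stated identities follow once we know how $|\partial_x\gamma|$ evolves, and hence how $\partial_t$ and $\partial_s$ fail to commute.

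First I would compute $\partial_t|\partial_x\gamma|$. Writing $|\partial_x\gamma|^2=\langle\partial_x\gamma,\partial_x\gamma\rangle$ and differentiating in $t$ gives $\partial_t|\partial_x\gamma|=\langle\tau,\partial_x\partial_t\gamma\rangle$. Converting $\partial_x=|\partial_x\gamma|\partial_s$ and inserting $\partial_t\gamma=V\nu+\Lambda\tau$, the Serret–Frenet relations yield
\[
\partial_s(V\nu+\Lambda\tau)=(\partial_s V+\Lambda k)\nu+(\partial_s\Lambda-kV)\tau,
\]
whose tangential part gives $\partial_t|\partial_x\gamma|=(\partial_s\Lambda-kV)|\partial_x\gamma|$. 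The metric evolution $\partial_t(\de s)=(\partial_s\Lambda-kV)\de s$ is then immediate from $\de s=|\partial_x\gamma|\de x$, and differentiating $\partial_s f=|\partial_x\gamma|^{-1}\partial_x f$ in $t$ produces the commutation rule $\partial_t\partial_s=\partial_s\partial_t+(kV-\partial_s\Lambda)\partial_s$.

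With the commutation rule in hand, the remaining formulas are bookkeeping. Differentiating $\tau=\partial_s\gamma$ gives $\partial_t\tau=\partial_s\partial_t\gamma+(kV-\partial_s\Lambda)\tau$; the normal/tangential splitting displayed above shows the tangential contributions cancel exactly, leaving $\partial_t\tau=(\partial_s V+\Lambda k)\nu$. The formula for $\partial_t\nu$ then follows by differentiating $\langle\tau,\nu\rangle=0$ and $|\nu|^2=1$. For the curvature I would start from $\boldsymbol{\kappa}=\partial_s\tau$, apply the commutation rule once more, and project onto $\nu$ — noting $\partial_t k=\langle\partial_t\boldsymbol{\kappa},\nu\rangle$ because $\partial_t\nu\perp\nu$ — to obtain $\partial_t k=\partial_s^2 V+\Lambda\partial_s k+k^2V$.

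The only genuinely laborious step, and the one most prone to sign errors, is the final substitution of $V=-2\partial_s^2 k-k^3+\mu k$ into $\partial_s^2 V+k^2 V$. Using $\partial_s^2(k^3)=3k^2\partial_s^2 k+6k(\partial_s k)^2$, the quantity $\partial_s^2 V+k^2 V$ collapses to $-2\partial_s^4 k-5k^2\partial_s^2 k-6k(\partial_s k)^2-k^5+\mu(\partial_s^2 k+k^3)$, and adding $\Lambda\partial_s k$ gives the last displayed identity. I expect no conceptual obstacle here: the care lies entirely in this expansion and in verifying the cancellation of the tangential terms in $\partial_t\tau$.
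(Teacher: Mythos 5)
Your proposal is correct and follows exactly the direct computation that the paper invokes: the paper's proof consists of citing \cite[Lemma~2.19]{ManPluPozSurvey}, which carries out the same steps you outline (evolution of $\vert\partial_x\gamma\vert$, hence of $\de s$ and the commutation rule, then Serret--Frenet bookkeeping for $\tau$, $\nu$, $k$, and the final substitution of $V$). All your intermediate identities, including the cancellation of tangential terms in $\partial_t\tau$ and the expansion $\partial_s^2V+k^2V=-2\partial_s^4k-5k^2\partial_s^2k-6k(\partial_sk)^2-k^5+\mu(\partial_s^2k+k^3)$, check out.
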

\begin{proof}
The proof of the lemma is obtained by direct computations, we refer for instance to~\cite[Lemma~2.19]{ManPluPozSurvey}.
\end{proof}

\begin{prop}\label{energydecreases}
Let $\gamma_t$ be a 
solution to the elastic flow in the sense of Definition~\ref{Def:elasticflow}. 
Then
\begin{align*}
\partial_{t}\mathcal{E}(\gamma_t)&=-\int_{\gamma} V^2\de  s\,.
\end{align*}
\end{prop}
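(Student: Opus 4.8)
The plan is to differentiate the energy $\E(\gamma_t)=\int_\gamma(k^2+\mu)\de s$ directly, using the two evolution formulas from Lemma~\ref{evoluzionigeom}: the evolution of the length measure $\pat(\de s)=(\pas\Lambda-kV)\de s$ and the evolution of the scalar curvature $\pat k=\pas^2 V+\Lambda\pas k+k^2 V$, where $V=-2\pas^2 k-k^3+\mu k$ is the prescribed normal velocity. Differentiating under the integral sign (justified by the $C^{\frac{4+\alpha}{4},4+\alpha}$ regularity of the parametrization) gives
\[
\pat\E(\gamma_t)=\int_\gamma\Big(2k\,\pat k+(k^2+\mu)(\pas\Lambda-kV)\Big)\de s.
\]

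First I would separate the tangential ($\Lambda$) contributions from the normal ones. Inserting $\pat k$ and using $\pas\mu=0$, the terms containing $\Lambda$ assemble into a total derivative, $2k\Lambda\pas k+(k^2+\mu)\pas\Lambda=\pas\big((k^2+\mu)\Lambda\big)$, whose integral reduces to the boundary value $\big[(k^2+\mu)\Lambda\big]_0^1=\mu\big(\Lambda(1)-\Lambda(0)\big)$ once the curvature condition $k=0$ at the endpoints is used. For the remaining (purely normal) terms $2k\pas^2 V+k^3 V-\mu kV$, I would integrate $\int_\gamma 2k\pas^2 V\de s$ by parts twice; since $k$ vanishes at the boundary the first boundary term drops, leaving
\[
\int_\gamma\big(2\pas^2 k+k^3-\mu k\big)V\de s-2\big[\pas k\,V\big]_0^1=-\int_\gamma V^2\de s-2\big[\pas k\,V\big]_0^1,
\]
using $2\pas^2 k+k^3-\mu k=-V$ in the bulk integrand.

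It remains to show that the accumulated boundary terms $\mu(\Lambda(1)-\Lambda(0))-2[\pas k\,V]_0^1$ vanish, and this is the only step where the full set of Navier conditions is needed — this is the main obstacle. The key is the pointwise identity $\mu\Lambda=2V\pas k$ at each endpoint $y\in\{0,1\}$. To obtain it I would use three facts at the boundary: the third order condition $(-2\pas k\,\nu+\mu\tau)_1=0$, which with $\nu_1=-\tau_2$ reads $\mu\tau_1=-2\pas k\,\tau_2$; the time derivative of the attachment condition $\gamma(y)_2=0$, which with $\pat\gamma=V\nu+\Lambda\tau$ and $\nu_2=\tau_1$ gives $V\tau_1+\Lambda\tau_2=0$; and the non-degeneracy $\tau_2\neq 0$, allowing division by $\tau_2$. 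Combining these, $\mu\Lambda=-\mu V\tau_1/\tau_2=-V\cdot(-2\pas k)=2V\pas k$ at $y\in\{0,1\}$. Substituting $\mu\Lambda(y)=2V(y)\pas k(y)$ into the boundary expression makes the two contributions cancel termwise, so $\pat\E(\gamma_t)=-\int_\gamma V^2\de s$, as claimed.
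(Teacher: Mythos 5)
Your proof is correct and follows essentially the same computation as the paper: differentiate $\E$ using Lemma~\ref{evoluzionigeom}, collect the tangential terms into the total derivative $\partial_s\bigl((k^2+\mu)\Lambda\bigr)$, integrate $\int_\gamma 2k\partial_s^2 V\de s$ by parts twice, and reduce everything to the pointwise boundary identity $\mu\Lambda=2\partial_s k\,V$ at $y\in\{0,1\}$. The only genuine divergence is how that identity is obtained. The paper observes that $\partial_t\gamma(y)$ has vanishing second component (differentiated attachment condition) while $-2\partial_s k\,\nu+\mu\tau$ has vanishing first component (third order condition), so their scalar product vanishes; expanding $\partial_t\gamma=V\nu+\Lambda\tau$ in the orthonormal frame then gives $-2\partial_s k\,V+\mu\Lambda=0$ with no further hypotheses. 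You instead work in components and divide by $\tau_2$, citing ``non-degeneracy''. Be careful here: in the paper the non-degeneracy condition~\eqref{nondeg} is imposed only on the \emph{initial} curve, and its persistence along the flow is not part of Definition~\ref{Def:elasticflow} (it is a separate assumption, Definition~\ref{defunifnondeg}, introduced only in Section~4), so as written your division leans on a hypothesis that is not available for $t>0$. The step is salvageable without it: since $\mu>0$, the third order condition $2\partial_s k(y)\,\tau_2(y)+\mu\tau_1(y)=0$ together with $|\tau(y)|=1$ forces $\tau_2(y)\neq 0$ at every time (otherwise $\tau_1(y)=\pm1$ would give $\mu=0$), so the division is in fact legitimate. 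Still, the paper's inner-product formulation is preferable precisely because it never needs this extra observation.
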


\begin{proof}
Using the evolution laws collected in Lemma~\ref{evoluzionigeom}, we get
\begin{align*}
\partial_{t}\int_{\gamma}k^{2}+\mu\de  s
&=\int_{\gamma}2k\partial_{t}k+\left(k^{2}+\mu\right)\left(\partial_s \Lambda-kV\right)\de  s\\
&=\int_{\gamma}2k\left(\partial_s^2V+ \partial_s k \Lambda+k^{2}V\right)+\left(k^{2}+\mu\right)\left(\partial_s \Lambda-kV\right)\de  s\\
&=\int_{\gamma}^{}2k\partial_s^2V+k^3V-\mu kV+\partial_s\left(\Lambda\left(k^2+\mu\right)\right)\de  s\,.
\end{align*}
Integrating twice by parts the term $\int_{\gamma} 2k\partial_s^2V \de  s $ we obtain 
\begin{equation}\label{eq:perparti}
\partial_{t}\int_{\gamma}k^{2}+\mu\de  s
=-\int_{\gamma}^{}V^2\de  s+
\left ( 2k\partial_s V-2\partial_s kV+\Lambda(k^2+\mu)
  \right) \Big|_{0}^1\,.
\end{equation}
It remains to show that the contribution of the boundary term in~\eqref{eq:perparti} is zero, once we assume that Navier boundary conditions hold.

Since $k(y)=0$ for $y\in\{0,1\}$, we only need to show that
\be
-2\partial_s kV+ \mu \Lambda
\Big|_{0}^1 = 0 \, .
\ee
From $\gamma(y)= (\gamma_1 (y),0)$, using relation~\eqref{navierbc} we obtain
\begin{align}
0 =&  \langle \pa_t \gamma(y) , -2 \pa_s k(y) \nu(y)+ \mu \tau (y) \rangle 
\\=& \langle V(y) \nu (y) + \Lambda(y) \tau(y) , -2 \pa_s k(y) \nu(y)+ \mu \tau (y) \rangle
\\ =& -2 \pa_s k(y) V(y) + \mu \Lambda(y) \, ,
\end{align}
where $y \in \{ 0,1 \}$.
\end{proof}
\section{Short-time existence}
In this section we show that, fixed an admissible initial curve, there exists a maximal existence time $T$. 
To do so, we find a unique solution to the associated analytic problem defined in~\eqref{analyticprobl} using a standard linearization procedure. More precisely, we use Solonnikov theory (see~\cite{solonnikov1}) to prove the well-posedness of the linearized system and then we conclude with a fixed point argument. 
Then, a key point is to ensure that solving the analytic problem is enough to obtain a solution to the geometric problem~\eqref{evolutionlaw} and that the solution of~\eqref{evolutionlaw} is unique up to reparametrization.

\subsection{Definition of the analytic problem}
Let $T>0$ and $\alpha\in(0,1)$. Let us consider a time-dependent family of curves parametrized by a map
$\gamma \in C^{\frac{4+\alpha}{4},4+\alpha}([0,T]\times[0,1])$.

We compute the normal velocity of such moving curves in terms of the parametrization (see~\cite{GaMePl1} for more details), that is
\begin{align}
(\pat \gamma)^\perp=&-2\frac{\pax^4 \gamma  }{\left|\pax\gamma  \right|^{4}}
+12\frac{\pax^3 \gamma\left\langle \pax^2 \gamma,\pax\gamma\right\rangle }{\left|\pax\gamma\right|^{6}}
+5\frac{\pax^2 \gamma\left|\pax^2 \gamma\right|^{2}}{\left|\pax\gamma\right|^{6}}
+8\frac{\pax^2 \gamma  \left\langle \pax^3 \gamma  ,\pax\gamma  \right\rangle }
{\left|\pax\gamma  \right|^{6}}
-35\frac{\pax^2 \gamma  \left\langle \pax^2 \gamma  ,\pax\gamma  \right\rangle ^{2}}
{\left|\pax\gamma  \right|^{8}}\nonumber\\
&+\left\langle 2\frac{\pax^4 \gamma  }{\left|\pax\gamma  \right|^{4}}-12\frac{\pax^3 \gamma  \left\langle 
\pax^2 \gamma  ,\pax\gamma  \right\rangle }{\left|\pax\gamma  \right|^{6}}-5\frac{\pax^2 \gamma  \left|
\pax^2 \gamma  \right|^{2}}{\left|\pax\gamma  \right|^{6}}-8\frac{\pax^2 \gamma  \left\langle 
\pax^3 \gamma  ,\pax\gamma  \right\rangle }{\left|\pax\gamma  \right|
^{6}}+35\frac{\pax^2 \gamma  \left\langle \pax^2 \gamma  ,\pax\gamma  \right\rangle ^{2}}{\left|\pax\gamma  \right|^{8}},\tau  \right\rangle 
\tau  \nonumber\\
&+\mu\frac{\pax^2 \gamma  }{\left|\pax\gamma  \right|^{2}}
-\left\langle \mu\frac{\pax^2 \gamma  }{\left|\pax\gamma  \right|^{2}},
\tau  \right\rangle \tau  \,. \label{Vparam}
\end{align}
We now aim to use a well-known technique, which was introduced for the first time by DeTurck in~\cite{deturck} for the Ricci flow and then has been employed in a large variety of situations (see for instance~\cite{DaChPo19,GaMePl2,ManPluPozSurvey}).
\medskip

More precisely, we choose as tangential velocity the function
\begin{align}
\widetilde{\Lambda}:= \left\langle \right. & \left.  -2\frac{\pax^4 \gamma  }{\left|\pax\gamma  \right|^{4}}+12\frac{\pax^3 \gamma  \left\langle 
\pax^2 \gamma  ,\pax\gamma  \right\rangle }{\left|\pax\gamma  \right|^{6}}+5\frac{\pax^2 \gamma  \left|
\pax^2 \gamma  \right|^{2}}{\left|\pax\gamma  \right|^{6}}+8\frac{\pax^2 \gamma  \left\langle 
\pax^3 \gamma  ,\pax\gamma  \right\rangle }{\left|\pax\gamma  \right| 
^{6}}  \right. \\ & \left. -35\frac{\pax^2 \gamma  \left\langle \pax^2 \gamma  ,\pax\gamma  \right\rangle ^{2}}{\left|\pax\gamma  \right|^{8}} +\mu\frac{\pax^2 \gamma  }{\left|\pax\gamma  \right|^{2}},\tau  \right\rangle \, ,
\label{Tparam}
\end{align}
turning~\eqref{motionequationtang} into a non–degenerate equation
\begin{align}
    \pat \gamma =& V\nu +\widetilde{\Lambda}\tau
    \\=& -2\frac{\pax^4 \gamma  }{\left|\pax\gamma  \right|^{4}}
+12\frac{\pax^3 \gamma\left\langle \pax^2 \gamma,\pax\gamma\right\rangle }{\left|\pax\gamma\right|^{6}}
+5\frac{\pax^2 \gamma\left|\pax^2 \gamma\right|^{2}}{\left|\pax\gamma\right|^{6}}
+8\frac{\pax^2 \gamma  \left\langle \pax^3 \gamma  ,\pax\gamma  \right\rangle }
{\left|\pax\gamma  \right|^{6}}
\nonumber\\
&-35\frac{\pax^2 \gamma  \left\langle \pax^2 \gamma  ,\pax\gamma  \right\rangle ^{2}}{\left|\pax\gamma  \right|^{8}}+\mu\frac{\pax^2 \gamma  }{\left|\pax\gamma  \right|^{2}}\, .
\label{motionequationparam}\end{align}
Moreover, we specify another tangential condition 
\be\label{cond2ordboundary}
\langle \pax^2 \gamma (y) , \tau(y) \rangle =0 \qquad \text{for $y \in \{ 0,1\}$}
\ee
and we notice that this together with the curvature condition, is equivalent to the {\em second order condition}
    \be\label{secordcond}
    \pax^2 \gamma (y) =0  \qquad \text{for $y \in \{ 0,1\}$.}
    \ee
{\em From now on, we identify the curve with its parametrization without further comments.}
\begin{dfnz}[Admissible initial parametrization]\label{admissinitialanalytic}
 A map $\gamma_0:[0,1] \to \R^2$ is an {\em admissible initial parametrization} if 
    \begin{enumerate}
        \item it belongs to $C^{4+\alpha}([0,1], \R^2)$ for some $\alpha \in (0,1)$;
        \item it satisfies the Navier boundary conditions in~\eqref{navierbc}: attachment,  curvature and third order conditions;
        \item it satisfies the {\em non-degeneracy condition}~\eqref{nondeg};
        \item it satisfies the following {\em fourth order condition}
    \be
       \left(V(0,y)\nu_0(y)+\widetilde{\Lambda} (0,y) \tau_0(y) \right)_2=0 \qquad \text{for $y \in \{0,1\}$,}
       \ee
         where $\nu_0$, $\tau_0$ are the normal and tangent unit vectors to $\gamma_0$.
    \end{enumerate}
\end{dfnz} 
In the following, we refer to conditions $(2)-(4)$ in Definition~\ref{admissinitialanalytic} as {\em compatibility conditions}. 
\begin{dfnz}[Solution  of the analytic problem]\label{Def:elasticflowanalytic}
Let $\gamma_0$ be an admissible initial parametrization as in Definition~\ref{admissinitialanalytic}. A time-dependent parametrization 
$\gamma_t$ for ${t\in [0,T]}$ is a solution to the \emph{analytic elastic flow} with initial datum $\gamma_0$ in the time interval $[0,T]$ with $T>0$,
if 
$$
\gamma(t,x)\in 
C^{\frac{4+\alpha}{4}, 4+\alpha}\left([0,T]\times [0,1],\mathbb{R}^2\right)\,,
$$
with $\gamma$ regular
and such that for every $t\in [0,T], x\in [0,1]$ and $y \in \{0,1\}$, satisfies the system
  \be
  \begin{cases}
  \pat \gamma =  V\nu+\widetilde{\Lambda}\tau=-2\frac{\pax^4 \gamma  }{\left|\pax\gamma  \right|^{4}} + l.o.t. 
      \\ \gamma(y)_2=0 \quad&\text{attachment conditions,}
\\\pax^2 \gamma (y)=0 &\text{second order conditions,}
\\\left(-2\pas k(y) \nu(y)
+\mu\tau(y)\right)_1 =0 &\text{third order conditions,}
\\ \gamma(0,\cdot)= \gamma_0(\cdot) &\text{initial condition.}
  \end{cases} \label{analyticprobl}
  \ee
  \end{dfnz}

\subsection{Linearization}
This section is devoted to proving the existence and 
uniqueness of solutions
to the linearized system associated to~\eqref{analyticprobl}.
To do so, we show that the linearized
system can be solved using the general theory introduced by Solonnikov in~\cite{solonnikov1}.
\medskip

We highlight that in this section we follow closely~\cite{GaMePl1}. More precisely, we adapt the arguments developed  for networks in~\cite[Section~3.3.2 and Section~3.3.3]{GaMePl1}, to the case of one curve with endpoints constrained to the $x$-axis.
\medskip

We linearize the highest order terms of the motion equation~\eqref{motionequationparam} around the initial
parametrization $\gamma_0$ and we obtain
\begin{align}
\pat \gamma
+\frac{2}{\vert\pax \gamma_0 \vert^4}\pax^4 \gamma
&=\left(\frac{2}{\vert\pax \gamma_0 \vert^4} -\frac{2}{\vert\pax \gamma \vert^4}\right)\pax^4 \gamma 
+\widetilde{f}(\pax^3 \gamma,\pax^2 \gamma,\pax \gamma)
\\&=:f(\pax^4 \gamma,\pax^3 \gamma,
\pax^2 \gamma,\pax \gamma)\,. \label{motioneqlin}
\end{align}
Then, after noticing that the attachment condition and the second order condition are already linear, we linearize the highest order terms of the third order condition, that is
\begin{align}
\left (-\frac{1}{\vert \pax \gamma_0 \vert^3} \langle \pax^3 \gamma , \nu_0 \rangle \nu_0 \right) _1&= \left (-\frac{1}{\vert \pax \gamma_0 \vert^3} \langle \pax^3 \gamma , \nu_0 \rangle \nu_0 + \frac{1}{\vert \pax \gamma \vert^3} \langle \pax^3 \gamma , \nu \rangle \nu + h (\pax \gamma)\right) _1 
\\&=: b(\pax^3 \gamma, \pax \gamma)\, .  \label{thirdlin}
\end{align}
Thus, the linearized system associated to~\eqref{analyticprobl} is given by
\begin{equation}\label{linearprobl}
\begin{cases}
\pat \gamma+\frac{2}{\vert\pax \gamma_0 \vert^4}\pax^4 \gamma =f
\\
\gamma_2=0 \qquad &\text{attachment conditions,}\\
\pax^2 \gamma =0 &\text{second order conditions,}\\
\left(-\frac{1}{\vert \pax \gamma_0 \vert^3}
\left\langle \pax^3 \gamma,\nu_0\right\rangle \nu_0\right)_1=b
&\text{third order conditions,}\\
\gamma(0)=\gamma_0 &\text{initial condition}
\end{cases}
\end{equation}
where $f, b$ are defined in~\eqref{motioneqlin},~\eqref{thirdlin} and we have omitted the dependence on $(t,x)\in[0,T]\times[0,1]$ in the  motion
equation, on $(t,y)\in[0,T]\times\{0,1\}$ in the boundary conditions 
and on $x\in[0,1]$ in the initial condition.
\begin{rem}
   Replacing the right-hand side of system~\eqref{linearprobl} with $(f,b,\psi)$, we get the general system
    \begin{equation}\label{linearproblgeneral}
\begin{cases}
\pat \gamma+\frac{2}{\vert\pax \gamma_0 \vert^4}\pax^4 \gamma =f
\\
\gamma_2=0 \qquad &\text{attachment conditions,}\\
\pax^2 \gamma =0 &\text{second order conditions,}\\
\left(-\frac{1}{\vert \pax \gamma_0 \vert^3}
\left\langle \pax^3 \gamma,\nu_0\right\rangle \nu_0 \right)_1=b
&\text{third order conditions,}\\
\gamma(0)=\psi&\text{initial condition}
\end{cases}
\end{equation}
    where $f\in C ^{\frac\alpha4,{^{\alpha}}}([0,T]\times[0,1],\mathbb{R}^2)$, $(b(\cdot, 0),b(\cdot,1))\in C  ^{\frac{1+\alpha}{4}}([0,T], \R^2)$ and $\psi\in C^{4+\alpha}\left([0,1],\mathbb{R}^2\right)$.
\end{rem}

\begin{dfnz}\label{lincompcond}[Linear compatibility conditions]
Let $(f,b)$ be a given right-hand side to the linear system~\eqref{linearproblgeneral}. A function $\psi\in C^{4+\alpha}\left([0,1],\mathbb{R}^2\right)$ satisfies the {\em linear compatibility conditions} with respect to $(f,b)$ if for $y\in\{0,1\}$ there hold
\begin{align}
&\psi(y)_2=0 \,,
\\&\pax^2 \psi(y)=0 \,,
\\&\left(-\frac{1}{\vert \pax \gamma_0 \vert^3}\left\langle \pax^3 \psi(y),\nu_0(y) \right\rangle 
\nu_0(y) \right)_1 =b(0,y),
\\&\left(\frac{2}{\vert \pax \gamma_0 \vert^4}\pax^4\psi(y)-f(0,y)\right)_2=0 \,.
\end{align}
\end{dfnz}
\medskip

\begin{teo}\label{linearexistence}
Let $\alpha\in(0,1)$ and let $T>0$.
Suppose that
\begin{itemize}
\item $f\in C ^{\frac\alpha4,{^{\alpha}}}([0,T]\times[0,1],\mathbb{R}^2)$;
\item $(b(\cdot, 0),b(\cdot,1))\in C  ^{\frac{1+\alpha}{4}}([0,T], \R^2)\,$;
\item $\psi\in C^{4+\alpha}\left([0,1],\mathbb{R}^2\right)$;
\item $\psi$ satisfies the linear compatibility conditions in Definition~\ref{lincompcond} with respect to $(f,b)$.
\end{itemize}
Then, the 
linearized problem~\eqref{linearproblgeneral} has a unique solution $\gamma\in C ^{\frac{4+\alpha}{4},{^{4+\alpha}}}([0,T]\times[0,1],\mathbb{R}^2)$.

Moreover, for all $T>0$ there exists a $C(T)>0$ such that the solution satisfies
\begin{equation}
 \Vert \gamma\Vert_{\frac{4+\alpha}{4},^{4+\alpha}} 
\leq C(T)\left( 
 \Vert f\Vert_{\frac\alpha4,^{\alpha}}+
\Vert b\Vert_{\frac{1+\alpha}{4}}+
\Vert \psi\Vert_{4+\alpha}
\right)\, .
\end{equation} 
\end{teo}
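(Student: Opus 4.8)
The plan is to recognise~\eqref{linearproblgeneral} as a linear, uniformly parabolic initial–boundary value problem for the vector unknown $\gamma=(\gamma_1,\gamma_2)$ and to verify the three structural hypotheses of Solonnikov's theory in~\cite{solonnikov1}: uniform parabolicity of the interior operator, the complementary (Lopatinskii–Shapiro) condition for the boundary operators at each endpoint, and the matching of the prescribed data with the required compatibility conditions. Once these are checked, the general theorem of Solonnikov yields at once the existence of a unique solution $\gamma\in C^{\frac{4+\alpha}{4},4+\alpha}$ together with the Schauder estimate with constant $C(T)$; uniqueness and the estimate are part of the same statement, so no separate argument is needed.

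First I would fix the framework. The system has $N=2$ scalar equations and the interior operator has order $2b=4$, so Solonnikov's theory requires $bN=4$ scalar boundary conditions at each of the two endpoints $y\in\{0,1\}$; these are provided by the attachment condition ($1$ scalar), the second order condition $\pax^2\gamma=0$ ($2$ scalars) and the third order condition ($1$ scalar). The principal part of the equation is diagonal, equal to $\pat+\tfrac{2}{|\pax\gamma_0|^4}\pax^4$ on each component; its symbol in the space variable is $\tfrac{2}{|\pax\gamma_0|^4}\xi^4$, which is real and strictly positive for $\xi\neq0$. Since $\gamma_0\in C^{4+\alpha}$ is regular, $|\pax\gamma_0|$ is bounded above and below away from zero on $[0,1]$, so the operator is uniformly parabolic and the parabolicity condition holds.

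The decisive step, and the one I expect to be the main obstacle, is the verification of the complementary condition at each endpoint; this is where the non-degeneracy condition~\eqref{nondeg} enters. Freezing the coefficient $g:=|\pax\gamma_0(y)|$ at a boundary point and passing to the model problem on the half-line, a Laplace transform in time of the homogeneous interior equation gives, for each component, the characteristic equation $\omega^4=-\tfrac{g^4\lambda}{2}$ with $\Re\lambda\ge0$ and $\lambda\neq0$; exactly two roots $\omega_1,\omega_2$ have negative real part, and they satisfy $\omega_1^2\neq\omega_2^2$ and $\omega_1\neq\omega_2$. Writing the bounded solution as $\hat\gamma_j=a_je^{\omega_1x}+b_je^{\omega_2x}$ for $j=1,2$, the four coefficients $(a_1,b_1,a_2,b_2)$ must be shown to vanish once the four homogeneous boundary conditions are imposed. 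The key observation is that, since $\nu_0$ is the rotation of $\tau_0$ by $\pi/2$, one has $(\nu_0(y))_1=-(\tau_0(y))_2\le-\rho<0$ by~\eqref{nondeg}, so that the third order condition $\bigl(-\tfrac{1}{g^3}\langle\pax^3\gamma,\nu_0\rangle\nu_0\bigr)_1=0$ is equivalent to the genuine third order condition $\langle\pax^3\hat\gamma(0),\nu_0\rangle=0$. The attachment and second order conditions give $a_2+b_2=0$ and $\omega_1^2a_j+\omega_2^2b_j=0$ for $j=1,2$, from which $a_2=b_2=0$ follows using $\omega_1^2\neq\omega_2^2$; the third order condition then reduces to $\omega_1^3a_1+\omega_2^3b_1=0$, which together with $\omega_1^2a_1+\omega_2^2b_1=0$ forces $a_1=b_1=0$ because the determinant $\omega_1^2\omega_2^2(\omega_2-\omega_1)$ is nonzero. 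Hence the only bounded solution of the model problem is trivial and the complementary condition holds; without~\eqref{nondeg} the third order condition would degenerate and this conclusion would fail.

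Finally I would check that the prescribed data meet Solonnikov's compatibility requirements at the two corners $(0,0)$ and $(0,1)$. The regularity assumptions on $f$, $b$ and $\psi$ are exactly those of the theorem, and the compatibility conditions of Definition~\ref{lincompcond} are precisely the ones dictated by the orders of the boundary operators: the attachment, second order and third order conditions evaluated at $t=0$ give the first three identities, while differentiating the attachment condition $\gamma_2=0$ in time once and substituting the equation $\pat\gamma=f-\tfrac{2}{|\pax\gamma_0|^4}\pax^4\gamma$ at $t=0$ yields the fourth, first-order-in-time compatibility condition $\bigl(\tfrac{2}{|\pax\gamma_0|^4}\pax^4\psi-f(0,\cdot)\bigr)_2=0$. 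With parabolicity, the complementary condition and these compatibility conditions verified, Solonnikov's theorem~\cite{solonnikov1} applies and gives the existence of a unique solution in $C^{\frac{4+\alpha}{4},4+\alpha}$ satisfying the asserted estimate.
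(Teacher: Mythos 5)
Your proposal is correct and follows the same overall strategy as the paper: both reduce the statement to checking the hypotheses of Solonnikov's general theorem (parabolicity of the interior operator, the complementary/Lopatinskii--Shapiro condition at the endpoints, and compatibility of the data at the corners), and both exploit the non-degeneracy condition~\eqref{nondeg} in the same way, namely that $(\nu_0(y))_1=-(\tau_0(y))_2\le -\rho<0$ upgrades the scalar third order condition to the full condition $\langle\pax^3\gamma(y),\nu_0(y)\rangle=0$. The genuine difference is in how the Lopatinskii--Shapiro condition is verified. The paper argues by an energy method: it tests the model ODE system~\eqref{LopatinskiiShapirosystem} on $[0,\infty)$ against $\vert\pax\gamma_0\vert\langle\overline{\gamma},\nu_0\rangle\nu_0$ and then against $\vert\pax\gamma_0\vert\langle\overline{\gamma},\tau_0\rangle\tau_0$, integrates by parts twice, uses the boundary conditions to annihilate the boundary terms, and takes real parts (using $\Re(\lambda)>0$) to conclude that the decaying solution is trivial. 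You instead solve the model problem explicitly: the decaying solutions are spanned by $e^{\omega_1 x}, e^{\omega_2 x}$ with $\omega_1,\omega_2$ the two roots of $\omega^4=-g^4\lambda/2$ of negative real part, and the four homogeneous boundary conditions become a linear system whose determinant is nonzero because $\omega_1\neq\omega_2$, $\omega_1^2\neq\omega_2^2$ and $\omega_1\omega_2\neq 0$; your root count and the determinant $\omega_1^2\omega_2^2(\omega_2-\omega_1)$ are correct. Your route is more elementary and makes transparent exactly which boundary condition eliminates which coefficient (and why four scalar conditions per endpoint are the right number); the paper's energy argument avoids any root computation and is the one that scales to settings where explicit roots are unwieldy (e.g.\ networks of curves, which is the context of~\cite{GaMePl1} that the paper adapts). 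Your treatment of the corner compatibility conditions (differentiating the attachment condition in time and substituting the equation to get the fourth identity of Definition~\ref{lincompcond}) also matches the role that definition plays in the paper, so nothing essential is missing.
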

\begin{proof}
To show the result we have to prove that
system~\eqref{linearproblgeneral} satisfies
all the hypothesis of the general~\cite[Theorem 4.9]{solonnikov1}.

Using the notation of~\cite{solonnikov1}, we write $\gamma=(u,v)$ and we denote by $b,r$, respectively, the number of boundary and initial conditions which in our case are $b=2$, $r=2$.
\\Moreover, we write the motion equation in the form
\be\label{solonn1}
\mathcal L \gamma = f
\ee
where the $2 \times 2$ matrix $\mathcal L$ is given by 
\begin{equation}
\mathcal L (x,t, \pax, \pat )=\begin{bmatrix}
\partial_t+\frac{2}{\vert \pax \gamma_0\vert^4}\partial^4_x & 0\\
0 & \partial_t+\frac{2}{\vert \pax \gamma_0\vert^4}\partial^4_x  
\end{bmatrix}
\end{equation}
and the vector $f= (f^1, f^2)$ is the right-hand side of motion equation in system~\eqref{linearproblgeneral}. 
\begin{itemize}
    \item We firstly show that system~\eqref{solonn1}  satisfies the parabolicity condition~\cite[page 8]{solonnikov1}. 
    As in~\cite{solonnikov1}, we call
$\mathcal{L}_0$ the principal part of the matrix $\mathcal L$ and we choose the integers $s_k,t_j$ in~\cite[page 8]{solonnikov1} as follows: $s_k=4$ for $k \in \{1,2\}$ and $t_j=0$ for $j \in \{1,2\}$.
Hence, we have $\mathcal{L}_0=\mathcal{L}$ and its determinant
$$
\mathrm{det}\mathcal{L}_0(x,t,i\xi,p)=
\left(  \frac{2}{\vert \pax \gamma_0 \vert}\xi^4+p\right) ^2
$$
is a polynomial of degree two in $p$ with one root 
$$p=-\frac{2}{\vert \pax \gamma_0 \vert^4}\xi^4 $$
of multiplicity two.
\\Then, choosing $\delta \leq \frac{2}{\vert \pax \gamma_0 \vert^4} $, the conditions of~\cite[page 8]{solonnikov1} are satisfied and the system is parabolic in the sense of Solonnikov. 
\item As it is shown in~\cite[pages 11-15]{eidelman2}, the compatibility condition at boundary points stated in~\cite[page 11]{solonnikov1}
is equivalent to the following Lopatinskii-Shapiro condition, which we check only
for $y=0$ (the case $y=1$ can be treated analogously).

Let $\lambda\in\mathbb{C}$ with $ \Re(\lambda)>0$ be arbitrary.
The Lopatinskii-Shapiro condition at $y$
is satisfied if every solution $\gamma\in C^4([0,\infty),\mathbb{C}^2)$ to
the system of ODEs
\begin{equation}\label{LopatinskiiShapirosystem}
\begin{cases}
\lambda \gamma(x)+\frac{1}{\vert\pax \gamma_0
\vert^4}\pax^4 \gamma(x)=0\\
\gamma(y)_2=0 \\
\pax^2 \gamma(y)=0 \\
\left(\frac{1}{\vert \pax \gamma_0 \vert ^3}
\left\langle \pax^3\gamma(y),\nu_0(y)\right\rangle \nu_0(y)\right)_1=0 \\
\end{cases}
\end{equation}where $x \in [0,\infty)$, which satisfies $\lim_{x\to\infty}\lvert \gamma(x)\rvert=0$, is the trivial solution.

To do so, we consider a solution $\gamma$ to~\eqref{LopatinskiiShapirosystem} 
such that $\lim_{x\to\infty}\lvert \gamma(x)\rvert=0$. We test the motion equation by 
$\vert\pax \gamma_0\vert\left\langle \overline{\gamma}(x),\nu_0\right\rangle \nu_0$
and we integrate twice by part to get
\begin{align}\label{afterint}
0&=\lambda\vert\pax \gamma_0\vert \int_0^\infty\vert \left\langle\gamma(x), \nu_0\right\rangle\vert^2
\,\mathrm{d}x
+\frac{1}{\vert\pax \gamma_0\vert^3} 
\int_0^\infty\vert \left\langle \pax^2 \gamma(x),\nu_0\right\rangle\vert^2\,\mathrm{d}x \nonumber\\
&+\frac{1}{\vert\pax \gamma_0\vert^3}\left\langle\overline{\gamma}(0),\nu_0\right\rangle \left\langle \pax^3 \gamma(0),\nu_0 \right\rangle 
-\frac{1}{\vert\pax \gamma_0\vert^3}
\left\langle\overline{\pax \gamma}(0),\nu_0\right\rangle \left\langle \pax^2 \gamma(0),\nu_0 \right\rangle \,,
\end{align}
where we have already used the fact that all derivatives decay to zero for $x$ tending to
infinity, due to the specific exponential form of the
solutions to~\eqref{LopatinskiiShapirosystem}.
We now observe that, since $\gamma_0$ is an admissible initial parametrization, 
the first component of $\nu_0$ is bounded from below. That is, from the third order condition in system~\eqref{LopatinskiiShapirosystem} it follows that $\left\langle \pax^3 \gamma(0),\nu_0 \right\rangle=0$. Thus, this condition together with the second order condition implies that the boundary terms in~\eqref{afterint} vanish. Then, taking the real part of~\eqref{afterint} and recalling that $ \Re(\lambda)>0$, we have $\left\langle \gamma(x),\nu_0 \right\rangle=0$ for all $x \in [0, \infty)$. In particular, from the attachment condition in~\eqref{LopatinskiiShapirosystem}, it follows that $\gamma(0)=0$.
\\As before, testing the motion equation by $\vert\pax \gamma_0\vert\left\langle \overline{\gamma}(x),\tau_0\right\rangle \tau_0$ and integrating by part, we get 
\begin{align}\label{afterint2}
0&=\lambda\vert\pax \gamma_0\vert \int_0^\infty\vert \left\langle\gamma(x), \tau_0\right\rangle\vert^2
\,\mathrm{d}x
+\frac{1}{\vert\pax \gamma_0\vert^3} 
\int_0^\infty\vert \left\langle \pax^2 \gamma(x),\tau_0\right\rangle\vert^2\,\mathrm{d}x\nonumber\\
&+\frac{1}{\vert\pax \gamma_0\vert^3}\left\langle\overline{\gamma}(0),\tau_0\right\rangle \left\langle \pax^3 \gamma(0),\tau_0 \right\rangle 
-\frac{1}{\vert\pax \gamma_0\vert^3}
\left\langle\overline{\pax \gamma}(0),\tau_0\right\rangle \left\langle \pax^2 \gamma(0),\tau_0 \right\rangle \,.
\end{align}
The boundary term in~\eqref{afterint2} vanishes since $\gamma(0)=0$ and the second order condition holds. Hence, considering again the real part of~\eqref{afterint2} we have that $\left\langle\gamma(x), \tau_0\right\rangle=0$ for all $x \in [0,\infty).$ So, we conclude that $\gamma(x)=0$ for all $x \in [0,\infty)$. 

\item Finally, to check the complementary condition for the initial datum stated in~\cite[page 12]{solonnikov1},
we observe that the $2\times 2$ matrix $[C_{\alpha j}]$ is the identity matrix.
Then, choosing $\gamma_{\alpha j}=0$ for $\alpha\in\{1,2\}$ and $j\in\{1,2\}$,
we obtain $\rho_\alpha=0$ and $C_0=Id$.
\\Moreover, the rows of the matrix $\mathcal{D}(x,p)=\hat{\mathcal{L}_0}(x,0,0,p)=p Id$
are linearly independent modulo the polynomial $p^2$.
\end{itemize}
\end{proof}

\subsection{Short-time existence of the analytic problem}
From now on, we fix $\alpha\in (0,1)$ and we consider an admissible initial parametrization $\gamma_0$ as in Definition~\ref{admissinitialanalytic}, 
with $\Vert \gamma_0 \Vert_{4+\alpha}=R$. Moreover, with a slight abuse of notation, we denote by $b(\cdot)$ the vector $(b( \cdot, 0), b(\cdot, 1))$ in the statement of Theorem~\eqref{linearexistence}.

\begin{dfnz}
For $T>0$
we define the linear spaces
\begin{align*}
\mathbb{E}_T:=\{&\gamma\in 
C  ^{\frac{4+\alpha}{4},{^{4+\alpha}}}([0,T]\times[0,1],\mathbb{R}^2)
\;\text{such that for}\;t\in[0,T]\,,
\\&\text{attachment and second order conditions hold}\}
\,,\\
\mathbb{F}_T:=\{&(f,b,\psi)\in 
C  ^{\frac{\alpha}{4},{^{\alpha}}}([0,T]\times[0,1],\mathbb{R}^2)
\times 
C  ^{\frac{1+\alpha}{4}}([0,T], \R^2)
\times  C^{4+\alpha}\left([0,1],\mathbb{R}^2\right)\\
&\text{such that the linear compatibility conditions hold}
\}
\,,
\end{align*}
endowed with the norms
\begin{align}
   \Vert \gamma \Vert_{\mathbb{E}_T}&=  \Vert \gamma \Vert_{\frac{4+\alpha}{4},4+\alpha} \, ,
   \\\Vert (f,b,\psi) \Vert_{\mathbb{F}_T}& = \Vert f \Vert_{\frac{\alpha}{4}, \alpha}+ \Vert b \Vert_{\frac{1+\alpha}{4}}+ \Vert \psi \Vert_{4+\alpha}.
\end{align}
Moreover, we consider the affine spaces
\begin{align*}
\mathbb{E}^0_T:=\{&\gamma\in 
\mathbb{E}_T\,\text{such that }\,\gamma_{\vert t=0}=\gamma_0\}
\,,\\
\mathbb{F}^0_T:=\{&(f,b)\,\text{such that }\, (f,b,\gamma_0)\in\mathbb{F}_{T}\}
\times\{\gamma_0\}
\,.
\end{align*}
\end{dfnz}
We remark that Lemma~\ref{3.17} and Lemma~\ref{3.23} below are respectively {\cite[Lemma~3.17]{GaMePl1}} and {\cite[Lemma~3.23]{GaMePl1}}.
\begin{lemma}\label{3.17}
For $T>0$, the map $L_{T}:\mathbb{E}_T\to \mathbb{F}_T$ defined by
$$
L_{T}(\gamma):=
\begin{pmatrix}
\pat\gamma+\frac{2}{\vert\pax \gamma_0\vert^4}\pax^4 \gamma\\
\left(-\frac{1}{\vert \pax \gamma_0 \vert^3}
\left\langle \pax^3\gamma,\nu_0\right\rangle \nu_0 \right)_1\\
\gamma_0 
\end{pmatrix} \, ,
$$
is a continuous isomorphism. 
\end{lemma}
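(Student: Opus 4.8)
The map $L_T$ is manifestly linear, so the plan is to show it is bounded, well defined (its image really lands in $\mathbb{F}_T$), and a bijection with bounded inverse; the last two properties will be read off almost directly from Theorem~\ref{linearexistence}, which is the analytic heart already in place. To begin, I would note that $\mathbb{E}_T$ and $\mathbb{F}_T$ are Banach spaces, being closed subspaces of the parabolic H\"older spaces cut out by the continuous trace conditions (attachment and second order conditions, respectively the linear compatibility conditions) that define them. Boundedness of $L_T$ is then immediate from the definitions of the norms: each of the three components of $L_T(\gamma)$ --- the parabolic operator $\pat\gamma+\frac{2}{\vert\pax\gamma_0\vert^4}\pax^4\gamma$, the third order boundary expression, and the initial trace $\gamma\vert_{t=0}$ --- is produced from $\gamma$ by differentiation and restriction, and the consequent loss of regularity is exactly what the target norms $\Vert\cdot\Vert_{\frac{\alpha}{4},\alpha}$, $\Vert\cdot\Vert_{\frac{1+\alpha}{4}}$ and $\Vert\cdot\Vert_{4+\alpha}$ are designed to absorb.

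The one point that genuinely requires work --- and the step I expect to be the main obstacle --- is \emph{well-definedness}, namely verifying that the triple $(f,b,\psi):=L_T(\gamma)$ satisfies the four linear compatibility conditions of Definition~\ref{lincompcond}. The conditions $\psi(y)_2=0$ and $\pax^2\psi(y)=0$ hold because $\gamma\in\mathbb{E}_T$ satisfies the attachment and second order conditions for every $t$, in particular at $t=0$. The matching of the third order expression at the corner $(0,y)$ with $b(0,y)$ holds by the very definition of $b$ together with $\psi=\gamma\vert_{t=0}$. The delicate one is the fourth condition: differentiating the attachment identity $\gamma(t,y)_2=0$ in $t$ yields $(\pat\gamma(t,y))_2=0$, so reading the second component of the motion equation at $(0,y)$ gives $\bigl(\frac{2}{\vert\pax\gamma_0\vert^4}\pax^4\psi(y)-f(0,y)\bigr)_2=0$. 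This is precisely where the parabolic corner compatibility is used, and it confirms $L_T(\gamma)\in\mathbb{F}_T$.

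For bijectivity I would observe that solving $L_T(\gamma)=(f,b,\psi)$ \emph{within} $\mathbb{E}_T$ is exactly the system~\eqref{linearproblgeneral}, since the attachment and second order conditions are encoded in the domain $\mathbb{E}_T$ rather than in the output of $L_T$. Given $(f,b,\psi)\in\mathbb{F}_T$ the linear compatibility conditions hold by definition, so Theorem~\ref{linearexistence} furnishes a unique $\gamma\in C^{\frac{4+\alpha}{4},4+\alpha}$ solving~\eqref{linearproblgeneral}; this $\gamma$ satisfies the attachment and second order conditions and hence lies in $\mathbb{E}_T$ with $L_T(\gamma)=(f,b,\psi)$, giving surjectivity, while the uniqueness part (applied to the right-hand side $0$) gives injectivity. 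Finally, the a priori estimate of Theorem~\ref{linearexistence} reads verbatim as $\Vert L_T^{-1}(f,b,\psi)\Vert_{\mathbb{E}_T}\le C(T)\,\Vert(f,b,\psi)\Vert_{\mathbb{F}_T}$, so the inverse is bounded and $L_T$ is a continuous isomorphism; alternatively, once bijectivity between Banach spaces is known, boundedness of $L_T^{-1}$ is automatic from the open mapping theorem.
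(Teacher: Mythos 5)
Your proposal is correct. Note that the paper itself gives no proof of Lemma~\ref{3.17} --- it is imported verbatim from \cite{GaMePl1} (Lemma~3.17 there) --- and your argument is precisely the standard one used in that reference: linearity and boundedness are read off from the definitions of the parabolic H\"older norms, well-definedness reduces to checking the four linear compatibility conditions (with the fourth obtained, exactly as you do, by differentiating the attachment identity in time), and bijectivity with bounded inverse follows from Theorem~\ref{linearexistence} together with the a priori estimate (or the open mapping theorem), once one interprets the third component of $L_T$ as the initial trace $\gamma\vert_{t=0}$, which is the reading needed for $L_T$ to be linear.
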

\noindent In the following 
we denote by $L^{-1}_T$ the inverse of $L_T$, by $B_M$ the open ball of radius $M>0$ and center $0$ in $\mathbb{E}_T$ and by $\overline {B_M}$ its closure.

Before proceeding we notice that, since the admissible initial parametrization $\gamma_0 : [0,1] \to \R^2$ is a regular curve, there exists a constant $C>0$ such that
\be\label{est0}
\inf_{x\in[0,1]}\vert\pax \gamma_0 \vert \geq C\,,
\ee
which obviously implies that
$$
\sup_{x\in[0,1]}\frac{1}{\vert \pax \gamma_0 \vert} \leq \frac{1}{C}\,.
$$
Then, as it is shown in~\cite{GaMePl1}, there exists a constant $\widetilde{C}$ depending on $R$ and $C$,  such that for every $j\in\mathbb{N}$ it holds
$$
\left\lVert \frac{1}{\vert \pax \gamma_0\vert^j}
\right\rVert_{\alpha}
\leq \left(\frac{\Vert \pax \gamma_0 \Vert_{\alpha}}{C^2}\right)^j
\leq \left(\frac{R}{C^2}\right)^j\qquad \text{and} \qquad
\left\lVert \frac{1}{\vert \pax \gamma_0 \vert^j}\right\rVert_{1+\alpha}
\leq \widetilde{C}(R,C)\, .
$$
We also notice that these estimates are preserved during the flow. More precisely, following the proof in~\cite{GaMePl1}, one can show that there exists $\widetilde{T}(M, C)\in(0,1]$ such that 
for $T\in[0,\widetilde{T}(M,C)]$ every curve $\gamma\in \mathbb{E}^0_T\cap B_M$ is regular 
and for all $t\in[0,\widetilde{T}(M,C)]$ it holds
\be\label{inverseest}
\sup_{x\in[0,1]} \frac{1}{\vert\pax \gamma(t,x)\vert}\leq\frac{2}{C}\,.
\ee
Furthermore, for every $j \in \N$ and $y \in \{0,1\}$, we have
\be\label{jestimate}
\left\lVert \frac{1}{\vert \pax \gamma\vert^j}\right\rVert_{\frac{\alpha}{4},\alpha}
\leq \left(\frac{4M}{C^2}\right)^j \qquad \text{and} \qquad \left\lVert \frac{1}{\vert \pax \gamma (y)\vert^j}\right\rVert_{\frac{1+\alpha}{4}}
\leq \widetilde{C}(R,C)\,.
\ee

\begin{lemma}\label{3.23}
    For $T\in (0, \widetilde{T}(M, C)]$, the map $N_T(\gamma):= (N_{T,1}, N_{T,2}, \gamma_0)$ given by 
    \begin{align*}
N_{T,1}:&
\begin{cases}
\mathbb{E}^0_T &\to C  ^{\frac\alpha4,
{^{\alpha}}}([0,T]\times[0,1],\mathbb{R}^2),\\
\gamma&\mapsto
f(\gamma):=f(\pax^4 \gamma, \pax^3 \gamma, \pax^2 \gamma, \pax \gamma),
\end{cases}\\
N_{T,2}:&
\begin{cases}
\mathbb{E}^0_T&\to  C^{\frac{1+\alpha}{4}}([0,T],\mathbb{R}^2), \\
\gamma&\mapsto b(\gamma):= b(\pax^3 \gamma, \pax \gamma)
\end{cases}
\end{align*}
where $f,b$ are defined in~\eqref{motioneqlin},~\eqref{thirdlin} respectively, is a well defined mapping from $\mathbb{E}^0_T$ to $\mathbb{F}^0_T$.
\end{lemma}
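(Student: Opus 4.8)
The plan is to show that $N_T$ maps $\mathbb{E}_T^0$ into $\mathbb{F}_T^0$, which amounts to two separate verifications: first, that $f(\gamma)$ and $b(\gamma)$ land in the correct parabolic H\"older spaces with the asserted regularity, and second, that the triple $(f(\gamma), b(\gamma), \gamma_0)$ satisfies the linear compatibility conditions of Definition~\ref{lincompcond}. Throughout I would exploit that every $\gamma \in \mathbb{E}_T^0 \cap B_M$ is regular with the uniform lower bound~\eqref{inverseest} on $|\pax \gamma|$, so that all the reciprocal factors $|\pax \gamma|^{-j}$ appearing in~\eqref{motioneqlin} and~\eqref{thirdlin} are controlled in the relevant norms by~\eqref{jestimate}.

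For the mapping property, I would first examine $f$ as defined in~\eqref{motioneqlin}. Its structure is $f = \bigl(\tfrac{2}{|\pax\gamma_0|^4} - \tfrac{2}{|\pax\gamma|^4}\bigr)\pax^4\gamma + \widetilde{f}(\pax^3\gamma, \pax^2\gamma, \pax\gamma)$. The crucial point is that, although $\pax^4\gamma$ itself only has $C^{\frac{\alpha}{4},\alpha}$ regularity, the prefactor difference $\tfrac{2}{|\pax\gamma_0|^4} - \tfrac{2}{|\pax\gamma|^4}$ is a smooth function of lower-order derivatives and hence carries extra regularity; the lower-order remainder $\widetilde{f}$ depends only on $\pax^3\gamma, \pax^2\gamma, \pax\gamma$, each of which lies in a strictly better H\"older class than $C^{\frac{\alpha}{4},\alpha}$. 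I would invoke the standard product and composition estimates in parabolic H\"older spaces (as used in~\cite{GaMePl1}) together with~\eqref{jestimate} to conclude $f(\gamma) \in C^{\frac{\alpha}{4},\alpha}([0,T]\times[0,1],\R^2)$. The analogous argument handles $b$ from~\eqref{thirdlin}: evaluated at the boundary points $y\in\{0,1\}$, it is built from $\pax^3\gamma(y)$ (which has time-regularity $C^{\frac{1+\alpha}{4}}$ as a trace) multiplied by smooth functions of $\pax\gamma(y)$, so $b(\gamma) \in C^{\frac{1+\alpha}{4}}([0,T],\R^2)$.

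For the compatibility verification, I would check the four conditions of Definition~\ref{lincompcond} with $\psi = \gamma_0$ and right-hand side $(f(\gamma), b(\gamma))$ at $t=0$. The first two, namely $\gamma_0(y)_2 = 0$ and $\pax^2\gamma_0(y) = 0$, hold because $\gamma_0$ is an admissible initial parametrization (conditions (2) of Definition~\ref{admissinitialanalytic}, recalling that the curvature and second order tangential conditions together give~\eqref{secordcond}). The third condition requires $\bigl(-\tfrac{1}{|\pax\gamma_0|^3}\langle \pax^3\gamma_0(y),\nu_0(y)\rangle\nu_0(y)\bigr)_1 = b(0,y)$; this follows by direct computation since, at $t=0$, one has $\gamma = \gamma_0$, so the two curvature-dependent terms in the definition~\eqref{thirdlin} of $b$ collapse to the single expression on the left together with the lower-order term $h(\pax\gamma_0)$, which is exactly how $b$ was constructed to linearize the third order condition. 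The fourth compatibility condition, $\bigl(\tfrac{2}{|\pax\gamma_0|^4}\pax^4\gamma_0(y) - f(0,y)\bigr)_2 = 0$, is where the fourth order condition in Definition~\ref{admissinitialanalytic} enters decisively: evaluating the linearized motion equation at $t=0$ shows that $\tfrac{2}{|\pax\gamma_0|^4}\pax^4\gamma_0 - f(0,\cdot)$ equals $-(\pat\gamma)|_{t=0} = -(V\nu_0 + \widetilde{\Lambda}\tau_0)$ up to the identification of the highest-order parametric expression with $\pat\gamma$, and its second component vanishes precisely by the admissibility requirement $\bigl(V(0,y)\nu_0(y) + \widetilde{\Lambda}(0,y)\tau_0(y)\bigr)_2 = 0$.

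I expect the main obstacle to be the careful bookkeeping in the mapping-property step, specifically confirming that the prefactor difference $\tfrac{2}{|\pax\gamma_0|^4} - \tfrac{2}{|\pax\gamma|^4}$ genuinely supplies enough regularity to compensate for multiplying the worst term $\pax^4\gamma$ — this is the one place where a naive estimate loses a derivative and one must instead use that the difference vanishes at $t=0$ (since $\gamma(0,\cdot) = \gamma_0$) to gain a factor with positive time-H\"older exponent. This interpolation-type gain, standard in the network literature~\cite{GaMePl1} but requiring precise tracking of the H\"older exponents, is the technical heart of the argument; the remaining composition and trace estimates, as well as the compatibility checks, are then routine.
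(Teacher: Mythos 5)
Your proposal is correct and follows essentially the same route as the paper's (much terser) proof: well-definedness of $N_T$ comes from the regularity of curves in $\mathbb{E}^0_T$ (the uniform lower bound~\eqref{inverseest} controlling the reciprocal factors, plus standard parabolic H\"older product and composition estimates), and membership in $\mathbb{F}^0_T$ is reduced to checking the linear compatibility conditions of Definition~\ref{lincompcond} for $\gamma_0$, which hold precisely because $\gamma_0$ is an admissible initial parametrization — your verification of the third condition (cancellation of the two third-order terms at $t=0$, leaving the nonlinear Navier condition) and of the fourth (identification with $-\left(V\nu_0+\widetilde{\Lambda}\tau_0\right)_2=0$) is exactly the content the paper summarizes as ``easily follows from the definitions.'' One minor remark: the ``derivative-loss'' worry in your final paragraph is not actually an obstacle for this lemma, since the product of two functions in $C^{\frac{\alpha}{4},\alpha}$ is again in $C^{\frac{\alpha}{4},\alpha}$; the vanishing of the prefactor $\frac{2}{\vert\pax\gamma_0\vert^4}-\frac{2}{\vert\pax\gamma\vert^4}$ at $t=0$ is needed only later, to get the small constants in the contraction estimates of Proposition~\ref{welldefined}, not for well-definedness.
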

\begin{proof}
    We have that $\gamma(t,\cdot)$ is a regular curve thanks to the discussion above, hence $N_T$ is well defined.
    In order to show that $N_T(\gamma) \in \mathbb{F}^0_T$, we have to prove that $\gamma_0$ satisfies the linear compatibility conditions with respect to $(N_{T,1}, N_{T,2})$. This easily follows from the definition of $N_{T,1},N_{T,2}$ and the fact that $\gamma_0$ is an admissible initial parametrization as in Definition~\ref{admissinitialanalytic}.
\end{proof}
\begin{dfnz}
  Let $\gamma_0$ be an admissible initial parametrization and let  $C>0$ the constant given by~\eqref{est0}. For $M>0$ and $T \in (0, \widetilde{T}(M, C)$ we define the mapping $K_T:\mathbb{E}^0_T \to  \mathbb{E}^0_T$ as $K_T:= L^{-1}_T N_T$.
\end{dfnz}
With a proof similar to~\cite[Proposition~3.28 and Proposition~3.29]{GaMePl1} one can prove the following result.
\begin{prop}\label{welldefined}
There exists a positive radius $M=M(R,C)$ and a positive time $\hat{T}(M) \in (0, \widetilde{T}(M,C))$ 
such that for all $T\in (0,\hat{T}(M)]$
the map
$K_T:\mathbb{E}^0_T\cap \overline{B_{M}}\to\mathbb{E}^0_T\cap \overline{B_{M}}$ 
is well-defined and it is a contraction.
\end{prop}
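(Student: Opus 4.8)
The plan is to run the standard Banach fixed point scheme for quasilinear parabolic problems: combine the continuous invertibility of $L_T$ (Lemma~\ref{3.17}) with a smallness estimate for the nonlinear map $N_T$ (Lemma~\ref{3.23}). First I would record that, since $L_T$ is a continuous isomorphism and the constant $C(T)$ in Theorem~\ref{linearexistence} may be taken nondecreasing in $T$, the operator norm of $L_T^{-1}$ is bounded by a constant $C_0 := C(\widetilde{T}(M,C))$ independent of $T \in (0, \widetilde{T}(M,C)]$. Hence
\[
\Vert K_T(\gamma_1) - K_T(\gamma_2)\Vert_{\mathbb{E}_T} \le C_0 \, \Vert N_T(\gamma_1) - N_T(\gamma_2)\Vert_{\mathbb{F}_T},
\]
and the whole problem reduces to estimating $N_T$.

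The heart of the argument is a Lipschitz estimate of the form
\[
\Vert N_T(\gamma_1) - N_T(\gamma_2)\Vert_{\mathbb{F}_T} \le C(M)\, T^{\beta}\, \Vert \gamma_1 - \gamma_2\Vert_{\mathbb{E}_T}
\]
for some $\beta>0$ (one expects $\beta = \alpha/4$) and all $\gamma_1,\gamma_2 \in \mathbb{E}^0_T \cap \overline{B_M}$. The components $f$ and $b$ from~\eqref{motioneqlin} and~\eqref{thirdlin} are rational expressions in $\pax\gamma,\dots,\pax^4\gamma$ whose denominators are powers of $\vert\pax\gamma\vert$; these are uniformly bounded below by~\eqref{inverseest} and their Hölder norms are controlled by~\eqref{jestimate}, so $f$ and $b$ are genuinely Lipschitz in $\gamma$ on $\overline{B_M}$ with constant depending only on $M$ (and $R,C$). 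The gain of the factor $T^{\beta}$ comes from the affine constraint defining $\mathbb{E}^0_T$: every $\gamma \in \mathbb{E}^0_T$ shares the trace $\gamma_0$ at $t=0$, so each coefficient difference such as $\tfrac{2}{\vert\pax\gamma_0\vert^4} - \tfrac{2}{\vert\pax\gamma\vert^4}$ vanishes at $t=0$; a function vanishing at $t=0$ with finite parabolic Hölder seminorm has sup-norm bounded by $T^{\alpha/4}$ times that seminorm, and the same mechanism bounds $\gamma_1-\gamma_2$. I would carry out these product and composition estimates term by term in the anisotropic norms $\Vert\cdot\Vert_{\frac\alpha4,\alpha}$ and $\Vert\cdot\Vert_{\frac{1+\alpha}{4}}$, exactly as in~\cite[Proposition~3.28 and Proposition~3.29]{GaMePl1}.

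With this estimate, both assertions follow by the usual two-step choice of constants. Let $\gamma_0^*(t,x):=\gamma_0(x)$ be the constant-in-time reference, which lies in $\mathbb{E}^0_T$ with $\Vert\gamma_0^*\Vert_{\mathbb{E}_T}=\Vert\gamma_0\Vert_{4+\alpha}=R$. For this choice the leading coefficient difference in $f$ vanishes identically, so $N_T(\gamma_0^*)$ is constant in time and $P:=\Vert K_T(\gamma_0^*)\Vert_{\mathbb{E}_T}\le C_0\,\Vert N_T(\gamma_0^*)\Vert_{\mathbb{F}_T}$ is bounded by a constant depending only on $R,C$, uniformly for small $T$. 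I then fix $M := R + 2P + 1$, depending only on $R,C$, and choose $\hat{T}(M)\in(0,\widetilde{T}(M,C))$ so small that $C_0\,C(M)\,\hat{T}^{\beta}\le \tfrac12$, making $K_T$ a contraction with constant $\le\tfrac12$ on $\mathbb{E}^0_T\cap\overline{B_M}$. Self-mapping then follows from the triangle inequality: for $\gamma\in\mathbb{E}^0_T\cap\overline{B_M}$,
\[
\Vert K_T(\gamma)\Vert_{\mathbb{E}_T}
\le \Vert K_T(\gamma)-K_T(\gamma_0^*)\Vert_{\mathbb{E}_T} + \Vert K_T(\gamma_0^*)\Vert_{\mathbb{E}_T}
\le \tfrac12\bigl(M+R\bigr) + P \le M,
\]
by the choice $M\ge R+2P$.

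The main obstacle I expect is precisely the Lipschitz/smallness estimate for $N_T$: verifying that every nonlinear difference carries a strictly positive power of $T$ requires care with the anisotropic parabolic Hölder norms, in particular the interplay between the spatial $C^\alpha$ and temporal $C^{\alpha/4}$ seminorms under products and under composition with the smooth map $z\mapsto\vert z\vert^{-j}$, together with the separate treatment of the boundary nonlinearity $b$ in the norm $\Vert\cdot\Vert_{\frac{1+\alpha}{4}}$ at $y\in\{0,1\}$. Beyond this technical core the scheme is routine, and the existence of a fixed point of $K_T$ then follows from the contraction mapping principle.
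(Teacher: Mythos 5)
Your proposal is correct and follows essentially the same route as the paper, which itself proves this proposition only by deferring to \cite[Proposition~3.28 and Proposition~3.29]{GaMePl1}: uniform boundedness of $L_T^{-1}$, a Lipschitz estimate for $N_T$ gaining a power $T^\beta$ from the shared initial trace, and the standard choice of $M$ (via a time-constant reference curve) followed by $\hat{T}(M)$. Your outline is in fact more explicit than the paper's one-line citation, and the technical core you defer to is exactly the one the paper defers to.
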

\begin{teo}\label{existenceanalyticprob}
Let $\gamma_0$ be an admissible initial parametrization as in Definition~\ref{admissinitialanalytic}. There exists a positive radius $M$ and a positive time $T$ such that the system~\eqref{analyticprobl}
has a unique solution in $C^{\frac{4+\alpha}{4},4+\alpha}\left([0,T]\times[0,1]\right)\cap \overline{B_M}$.	
\end{teo}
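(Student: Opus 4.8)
The plan is to obtain the solution as the unique fixed point of the contraction $K_T=L^{-1}_T N_T$, via the Banach--Caccioppoli theorem; all the analytic work is already in place. Indeed, Lemma~\ref{3.17} gives that $L_T$ is a continuous isomorphism, Lemma~\ref{3.23} that $N_T$ takes values in $\mathbb{F}^0_T$, and Proposition~\ref{welldefined} that, for a suitable radius $M=M(R,C)$ and time $T\in(0,\hat{T}(M)]$, the map $K_T$ sends $\mathbb{E}^0_T\cap\overline{B_M}$ into itself and is a contraction there. So the theorem reduces to assembling these facts and translating the fixed-point identity back into the analytic problem~\eqref{analyticprobl}.

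First I would record that $\mathbb{E}^0_T\cap\overline{B_M}$ is a nonempty complete metric space. The space $\mathbb{E}_T$ is a closed linear subspace of $C^{\frac{4+\alpha}{4},4+\alpha}([0,T]\times[0,1],\R^2)$, since the attachment and second order conditions are closed linear constraints, hence it is a Banach space; the affine subspace $\mathbb{E}^0_T$ and the closed ball $\overline{B_M}$ are both closed, so their intersection is a closed---therefore complete---subset, nonempty for $M$ large (it contains, for instance, a time-independent extension of $\gamma_0$). Applying the Banach fixed point theorem to the contraction $K_T$ furnished by Proposition~\ref{welldefined} then yields a unique $\gamma\in\mathbb{E}^0_T\cap\overline{B_M}$ with $K_T(\gamma)=\gamma$.

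The only genuine verification left is that this fixed-point identity is equivalent to solving~\eqref{analyticprobl}. Since $L_T$ is invertible, $\gamma=L^{-1}_T N_T(\gamma)$ is equivalent to $L_T(\gamma)=N_T(\gamma)$, which read componentwise gives
\be
\pat\gamma+\frac{2}{|\pax\gamma_0|^4}\pax^4\gamma=f(\gamma),\qquad
\left(-\frac{1}{|\pax\gamma_0|^3}\langle\pax^3\gamma,\nu_0\rangle\nu_0\right)_1=b(\gamma),
\ee
the third component being the trivial identity $\gamma_0=\gamma_0$. Inserting the definitions~\eqref{motioneqlin} of $f$ and~\eqref{thirdlin} of $b$, the linearizing terms $\tfrac{2}{|\pax\gamma_0|^4}\pax^4\gamma$ and $-\tfrac{1}{|\pax\gamma_0|^3}\langle\pax^3\gamma,\nu_0\rangle\nu_0$ cancel against their counterparts on the right-hand sides, leaving exactly the nonlinear motion equation~\eqref{motionequationparam} together with the nonlinear third order boundary condition. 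The attachment and second order conditions hold automatically because $\gamma\in\mathbb{E}^0_T\subset\mathbb{E}_T$, while the initial condition $\gamma(0,\cdot)=\gamma_0$ is built into $\mathbb{E}^0_T$; thus $\gamma$ solves~\eqref{analyticprobl}. Conversely, any solution of~\eqref{analyticprobl} lying in $\overline{B_M}$ is a fixed point of $K_T$, so uniqueness of the fixed point forces uniqueness of the solution in $C^{\frac{4+\alpha}{4},4+\alpha}([0,T]\times[0,1])\cap\overline{B_M}$.

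I do not expect a serious obstacle at this stage: the contraction estimate---the real difficulty---is already packaged in Proposition~\ref{welldefined}, which in turn rests on the well-posedness of the linear problem (Theorem~\ref{linearexistence}). The two points requiring a little care are the completeness and nonemptiness of the domain $\mathbb{E}^0_T\cap\overline{B_M}$, and the bookkeeping in the cancellation above, to be certain that the fixed-point equation reproduces the full nonlinear system~\eqref{analyticprobl} rather than merely its linearization.
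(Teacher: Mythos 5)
Your proposal is correct and takes essentially the same route as the paper: the paper's own proof likewise identifies the solutions of~\eqref{analyticprobl} in $C^{\frac{4+\alpha}{4},4+\alpha}\left([0,T]\times[0,1]\right)\cap \overline{B_M}$ with the fixed points of $K_T$ in $\mathbb{E}^0_T\cap\overline{B_M}$ and concludes by the Banach--Caccioppoli theorem applied to the contraction furnished by Proposition~\ref{welldefined}. You merely make explicit two details the paper leaves implicit, namely the completeness and nonemptiness of $\mathbb{E}^0_T\cap\overline{B_M}$ and the cancellation of the linearizing terms showing that the fixed-point identity reproduces the full nonlinear system.
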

\begin{proof}
Let $M$ and $\hat{T}(M)$ be the radius and time as in Proposition~\ref{welldefined} and let $T\in (0,\hat{T}(M)]$. The solutions of~\eqref{analyticprobl} in $C^{\frac{4+\alpha}{4},4+\alpha}\left([0,T]\times[0,1]\right)\cap \overline{B_M}$ are the fixed points of $K_T$ in $\mathbb{E}_T^0\cap \overline{B_M}$. Moreover, it is unique by the Banach-Caccioppoli contraction theorem as $K_T$ is a contraction of the complete metric space $\mathbb{E}_T^0\cap\overline{B_M}$.
\end{proof}
\subsection{Geometric existence and uniqueness}
In Theorem~\ref{existenceanalyticprob} we show that there exists a unique solution to the analytic problem~\eqref{analyticprobl} provided that the initial curve is admissible. 
In this section, we first establish a relation between geometrically admissible initial curves and admissible initial parametrizations, then we show the geometric uniqueness of the flow, in the sense that up to reparametrization the geometric problem~\eqref{evolutionlaw} has a unique solution.

We remark that the following technique was introduced by Garcke and Novick-Cohen in~\cite{GarcNovic}, and then it has been employed, for instance, by Garke, Pluda at al. in~\cite{GoMePlu,GaMePl1, GaMePl2} for the case of shortening and elastic flows of networks.
\begin{lemma}\label{repara}
Suppose that $\gamma_0$ is a geometrically admissible initial curve as in Definition~\ref{admissinitialgeom}. Then, there exists a smooth function $\psi_0:[0,1]\to [0,1]$ such that the reparametrization 
$\widetilde \gamma_0 = \gamma_0 \circ \psi_0$
of  $\gamma_0$ is an admissible initial parametrization for the analytic problem~\eqref{analyticprobl}.
\end{lemma}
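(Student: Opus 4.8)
The plan is to construct the reparametrization $\psi_0$ so that the only genuinely new condition demanded of an admissible initial \emph{parametrization} beyond those already guaranteed by geometric admissibility---namely the second order condition $\pax^2\widetilde\gamma_0(y)=0$ and the fourth order condition expressed through $\widetilde\Lambda$---is satisfied. The geometric conditions (attachment, curvature, third order, non-degeneracy, and the geometric fourth order condition of Definition~\ref{admissinitialgeom}) are invariant under reparametrization, so they transfer automatically; the work is entirely in arranging the parametrization-dependent conditions at the two endpoints $y\in\{0,1\}$.

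First I would recall the chain rule for derivatives under reparametrization: writing $\widetilde\gamma_0=\gamma_0\circ\psi_0$, one has $\pax\widetilde\gamma_0=(\psi_0')\,(\pax\gamma_0)\circ\psi_0$ and $\pax^2\widetilde\gamma_0=(\psi_0')^2(\pax^2\gamma_0)\circ\psi_0+\psi_0''\,(\pax\gamma_0)\circ\psi_0$, with analogous but longer expressions for $\pax^3$ and $\pax^4$. The curvature condition $k_0(y)=0$ from geometric admissibility means $\pax^2\gamma_0(y)$ is parallel to $\pax\gamma_0(y)$ at the endpoints, i.e. its normal component vanishes; the second order condition $\pax^2\widetilde\gamma_0(y)=0$ then becomes a single scalar equation prescribing $\psi_0''(y)$ in terms of $\psi_0'(y)$ and the tangential part of $\pax^2\gamma_0(y)$. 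Thus, after freely choosing $\psi_0'(y)>0$ (say $\psi_0'(0)=\psi_0'(1)=1$, preserving orientation and regularity), the value $\psi_0''(y)$ is \emph{forced} so as to kill the tangential component and yield $\pax^2\widetilde\gamma_0(y)=0$.

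Next I would handle the fourth order condition of Definition~\ref{admissinitialanalytic}, $\bigl(V(0,y)\nu_0(y)+\widetilde\Lambda(0,y)\tau_0(y)\bigr)_2=0$. The key observation is that $V\nu+\widetilde\Lambda\tau=\pat\gamma$ is, by the very construction in~\eqref{motionequationparam}, a purely geometric normal-plus-chosen-tangential velocity; its $y$-component at the endpoint can be computed from the parametrization data $\pax\widetilde\gamma_0,\dots,\pax^4\widetilde\gamma_0$ at $y$. Since $\pax^2\widetilde\gamma_0(y)=0$ will already hold, most of the lower-order terms in $\widetilde\Lambda$ and $V$ simplify, and the condition reduces to a scalar equation in which $\pax^4\widetilde\gamma_0(y)$ appears. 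The third and fourth derivatives of $\psi_0$ at $y$ enter $\pax^3\widetilde\gamma_0(y)$ and $\pax^4\widetilde\gamma_0(y)$ linearly through the leading terms $(\psi_0'(y))^3$ and $(\psi_0'(y))^4$ together with $\psi_0'''(y),\psi_0^{(4)}(y)$; one uses the \emph{geometric} fourth order condition of Definition~\ref{admissinitialgeom} (which controls the normal component of the fourth-order expression) to match the normal part, and then solves for the remaining free higher derivative of $\psi_0$ to enforce the vanishing of the $y$-component. I would check that the third order condition, being purely a first-component/geometric statement, is untouched.

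Having prescribed the finite jet $\bigl(\psi_0(y),\psi_0'(y),\psi_0''(y),\psi_0'''(y),\psi_0^{(4)}(y)\bigr)$ at the two endpoints $y=0,1$ with $\psi_0(0)=0$, $\psi_0(1)=1$ and $\psi_0'>0$ there, the final step is to extend these two boundary jets to a globally defined smooth $\psi_0:[0,1]\to[0,1]$ that is a diffeomorphism. This is routine: interpolate the prescribed derivative data by a smooth function (for instance a polynomial or a cutoff-glued construction) and, if necessary, perturb in the interior to guarantee $\psi_0'>0$ throughout, which is possible because the only rigid constraints live at the endpoints and $\psi_0'(0),\psi_0'(1)>0$. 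The regularity $\widetilde\gamma_0\in C^{4+\alpha}$ is inherited from $\gamma_0\in C^{4+\alpha}$ and $\psi_0\in C^\infty$. I expect the main obstacle to be the bookkeeping in the fourth order step: one must verify carefully that after imposing $\pax^2\widetilde\gamma_0(y)=0$ the geometric fourth order condition from Definition~\ref{admissinitialgeom} is \emph{exactly} what is needed to make the remaining scalar equation solvable for a derivative of $\psi_0$, rather than an over-determined obstruction---in other words, confirming that the tangential degree of freedom $\psi_0^{(4)}(y)$ genuinely controls the $y$-component while the geometric hypothesis handles the normal component.
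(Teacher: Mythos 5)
Your proposal follows the paper's proof essentially step by step: fix $\psi_0(y)=y$, $\pax\psi_0(y)=1$, invoke invariance of the geometric conditions (attachment, curvature, third order, non-degeneracy), force $\pax^2\psi_0(y)$ using that $k_0(y)=0$ makes $\pax^2\gamma_0(y)$ parallel to $\pax\gamma_0(y)$ so the second order condition holds, then solve a linear scalar equation for $\pax^4\psi_0(y)$ so that the tangential velocity vanishes at the endpoints (the geometric fourth order condition already handling the normal part), and finally glue the boundary Taylor jets into a smooth orientation-preserving reparametrization of $[0,1]$. This is the same construction as in the paper, which merely adds the normalization $\pax^3\psi_0(y)=1$ before solving for the fourth derivative.
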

\begin{proof}
We look for a smooth map $\psi_0:[0,1]\to [0,1]$ with 
 $\pax \psi_0 (x)\neq 0$ for every $x\in [0,1]$, such that $\widetilde \gamma_0 = \gamma_0 \circ \psi_0 : [0,1] \to \R^2$ is regular and
of class $C^{4+\alpha}([0,1])$. If $\psi_0(y)=y$ for $y \in \{0,1\}$, then $\widetilde \gamma_0$ clearly satisfies the attachment condition. Moreover, since the geometric quantities are invariant under reparametrization, also the non-degeneracy condition and the third-order condition are still satisfied.
In order to fulfil the second order condition $\pax^2 \widetilde \gamma_0(y)=0$, 
we consider a map $\psi_0$ such that 
\be 
\pax \psi_0(y)=1 \qquad \text{and} \qquad 
 \pax^2 \psi_0 (y)=-\frac{\pax^2 \gamma_0 (y)}{\pax \gamma_0(y)}
 \ee
 for $y\in\{0,1\}$. Thus, it remains to show that 
 \be
 \left ( \widetilde V_0 \widetilde \nu_0 + \widetilde T_0 \widetilde \tau_0 \right)_2 =0 \, .
 \ee
 As we notice above, this is equivalent to 
  \be
 \left ( V_0  \nu_0 + \widetilde T_0 \tau_0 \right)_2 =0 \, ,
 \ee
 however, since $\gamma_0$ is a geometrically admissible initial curve, it is enough to prove that
 \be\label{4ordcond}
 \widetilde T_0 - T_0=0 \, .
 \ee
 Thus, asking that $\pax^3 \psi_0(y) =1$, we rewrite relation~\eqref{4ordcond} as
 \be
 g_1 (\pax \gamma ) (y) \pax^4 \psi_0 (y) + g_2 (\pax \gamma, \pa^2 \gamma, \pax^3 \gamma)(y)=0
 \ee
 where $g_1, g_2$ are non-linear functions. Hence, $\pax^4 \psi_0(y)$ are uniquely determined
for $y \in \{0,1\}$. In the end, we may choose $\psi_0$ to be the fourth Taylor polynomial near each boundary point, 
join these values up inside the interval $(0,1)$ and then make it smooth. 
\end{proof}

\begin{dfnz}\label{maxsol}
    Let $\gamma_0$ be a geometrically admissible initial curve as in Definition~\ref{admissinitialgeom} and $T>0$. A time-dependent family of curves $\gamma_t$ for $t \in [0,T)$ is a maximal solution to the elastic flow with initial datum $\gamma_0$, if it is a solution in the sense of Definition~\ref{Def:elasticflow} in $[0, \hat T]$ for some $\hat T < T$ and if there does not exist a solution $\widetilde \gamma_t$ in $[0, \widetilde T]$  with $\widetilde T > T$ and such that $\gamma=\widetilde \gamma$ in $(0,T)$.
\end{dfnz}
Following the arguments in~\cite[Lemma~5.8 and Lemma~5.9]{GaMePl2}, one can show that a maximal solution to the elastic flow always exists and it is unique up to reparametrization. Hence, from now on we only consider the time $T$ in Definition~\ref{maxsol}, which we call maximal time of existence and we denote by $T_{\max}$.
 \medskip \\We notice that the following theorem is slightly different to the corresponding one in~\cite{GaMePl2}, where the authors firstly prove the geometric uniqueness in a ``generic'' time interval $[0,T]$ and then they show the existence of $T_{\max}$ using the fact that the solution is unique in a geometric sense. However, with an intermediate step, the result can be stated as follows.
\begin{teo}\label{geomexistence}[Geometric existence and uniqueness]
Let $\gamma_0$ be a geometrically admissible initial curve as in Definition~\ref{admissinitialgeom}. Then, there exists
a positive time $T_{\max}$ such that within the time interval $[0,T_{\max})$ there is a unique elastic flow $\gamma_t$ in the sense of Definition~\ref{Def:elasticflow}.
\end{teo}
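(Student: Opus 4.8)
The plan is to combine the analytic short-time existence result (Theorem~\ref{existenceanalyticprob}) with the reparametrization lemma (Lemma~\ref{repara}) and the geometric uniqueness machinery referenced from~\cite{GaMePl2}. First I would start from a geometrically admissible initial curve $\gamma_0$ as in Definition~\ref{admissinitialgeom}. By Lemma~\ref{repara}, there is a smooth reparametrization $\widetilde\gamma_0 = \gamma_0 \circ \psi_0$ which is an admissible initial parametrization in the sense of Definition~\ref{admissinitialanalytic}, so in particular it satisfies all four compatibility conditions required to feed into the analytic problem. Applying Theorem~\ref{existenceanalyticprob} to $\widetilde\gamma_0$ yields a unique solution $\gamma \in C^{\frac{4+\alpha}{4},4+\alpha}([0,T]\times[0,1])\cap\overline{B_M}$ of the analytic problem~\eqref{analyticprobl} on some interval $[0,T]$ with $T>0$.

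Next I would check that this analytic solution is in fact a solution of the geometric problem~\eqref{evolutionlaw} with the Navier boundary conditions~\eqref{navierbc}. This is essentially built in: the motion equation in~\eqref{analyticprobl} is $\pat\gamma = V\nu + \widetilde\Lambda\tau$, so its normal component is exactly $(\pat\gamma)^\perp = V\nu$ as required by~\eqref{evolutionlaw}; the attachment and third order conditions in~\eqref{analyticprobl} coincide with those in~\eqref{navierbc}; and the second order condition $\pax^2\gamma(y)=0$ together with the parametrization being proportional to arclength at the boundary is equivalent (via~\eqref{secordcond}) to the curvature condition $k(y)=0$. Hence $\gamma$ is a solution of~\eqref{evolutionlaw}--\eqref{navierbc} in the sense of Definition~\ref{Def:elasticflow}, giving existence on $[0,T]$.

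For the passage from this particular short-time solution to a \emph{maximal} one, and for uniqueness up to reparametrization, I would invoke the arguments of~\cite[Lemma~5.8 and Lemma~5.9]{GaMePl2}, as announced in the text preceding Definition~\ref{maxsol}. Concretely: two geometric solutions with the same initial datum, after being written in a common (e.g.\ arclength-proportional or DeTurck) gauge, both satisfy the analytic problem, whose solution is unique by Theorem~\ref{existenceanalyticprob}; since the geometric formulation is reparametrization invariant (as noted in the remarks following Definition~\ref{Def:elasticflow}), any two solutions must coincide as curves on their common interval. Standard continuation then defines $T_{\max}$ as the supremum of existence times, and the ``intermediate step'' alluded to in the excerpt lets us produce the single unique flow on $[0,T_{\max})$ rather than on a generic $[0,T]$.

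The main obstacle I expect is the geometric uniqueness step, i.e.\ genuinely reducing two a priori differently parametrized geometric solutions to a single analytic solution. This requires constructing, for an arbitrary geometric solution, a time-dependent family of reparametrizations $\psi_t$ transforming it into the special DeTurck/arclength-type gauge used in~\eqref{analyticprobl}; one must show that $\psi_t$ exists, is sufficiently regular (solving itself an auxiliary ODE/PDE in the parameter with compatible boundary behavior $\psi_t(y)=y$), and preserves the function spaces so that Theorem~\ref{existenceanalyticprob} applies. The boundary conditions make this delicate, since the reparametrization must respect the constraint that endpoints stay on the $x$-axis and keep the second order condition intact. This is precisely where the cited results of~\cite{GarcNovic} and~\cite{GaMePl2} do the heavy lifting, and I would adapt them to the single-curve, partially-free-boundary setting rather than reprove them from scratch.
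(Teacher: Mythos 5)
Your proposal is correct and follows essentially the same route as the paper: reparametrize $\gamma_0$ via Lemma~\ref{repara}, solve the analytic problem by Theorem~\ref{existenceanalyticprob}, note that the analytic solution solves the geometric problem, and then obtain geometric uniqueness by reparametrizing any competing geometric solution into the analytic gauge (a time-dependent $\psi$ with $\psi(t,y)=y$ solving a fourth-order parabolic boundary value problem, exactly the step you flag as the main obstacle) and invoking uniqueness of the analytic problem. The only difference is one of detail: the paper actually carries out that reparametrization step, deriving the PDE for $\psi$ and passing to the inverse map $\xi=\psi^{-1}$ so that Solonnikov theory and a fixed-point argument apply, rather than importing it wholesale from~\cite{GaMePl2}.
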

\begin{proof}
By Lemma~\ref{repara} there exists a reparametrization $\widetilde \gamma_0$ of $\gamma_0$ which is an admissible initial parametrization in the sense of Definition~\ref{admissinitialanalytic}. Then, by Theorem~\ref{existenceanalyticprob} there exists a solution $\widetilde \gamma_t$ of system~\eqref{analyticprobl} in some maximal time interval $[0, \widetilde T_{\max}]$. In particular, $\widetilde \gamma_t$ is a solution to system~\eqref{evolutionlaw}. 
\\Let us suppose that $\gamma_t$ is another solution to the elastic flow in sense of Definition~\ref{Def:elasticflow} in a time interval $[0, T']$, with the same geometrically admissible initial curve. 
We aim to show that there exists a time $T_{\max} \in (0, \max \{\widetilde T_{\max}, T'\})$ such that $\widetilde \gamma_t = \gamma_t$ (as curves) for every $t \in [0, T_{\max}]$.
\\To be precise, we need to construct a regular reparametrization $\psi(t,x): [0, T_{\max}] \times [0,1] \to [0,1]$, such that the reparametrized curve $\sigma (t,x) = \gamma(t, \psi(t,x))$ is a solution to the analytic problem~\eqref{analyticprobl} and coincides with $\widetilde \gamma_t$ in a possibly small but positive time interval.
Hence, computing the space and time derivatives of $\sigma(t,x)$ as a composed function and replacing in the evolution equation 
\be
     \pat \sigma (t,x) = \frac{\pax^4 \sigma }{\vert \pax \sigma \vert ^4}+ l.o.t.\label{eqsigma}
     \ee
     we get the following evolution equation for $\psi$
      \begin{align}
         \pat \psi (t,x) = & - \frac{ \langle \pat \gamma (t, \psi(t,x)) ,\pax \gamma (t, \psi(t,x)) \rangle }{\vert \pax \gamma (t, \psi(t,x)) \vert ^2} +\frac{\langle \pax^4 \gamma (t, \psi(t,x)),\pax \gamma (t, \psi(t,x)) \rangle  }{\vert \pax \gamma(t, \psi(t,x) \vert ^6}
         \\+ & \frac{6 \langle \pax^3 \gamma (t, \psi(t,x)), \pax \gamma (t, \psi(t,x)) \rangle \pax^2 \psi (t,x)}{\vert \pax \gamma(t, \psi(t,x) \vert ^6 (\pax \psi(t,x))^2}
          +\frac{3 \langle \pax^2 \gamma (t, \psi(t,x)), \pax \gamma (t, \psi(t,x)) \rangle (\pax^2 \psi (t,x))^2}{\vert \pax \gamma(t, \psi(t,x) \vert ^6 (\pax \psi(t,x))^4}
          \\&+\frac{ 4 \langle \pax^2 \gamma (t, \psi(t,x)), \pax \gamma (t, \psi(t,x)) \rangle \pax^3 \psi (t,x)}{\vert \pax \gamma(t, \psi(t,x) \vert ^6 (\pax \psi(t,x))^3}
      + \frac{\pax^4 \psi (t,x)} {\vert \pax \gamma(t, \psi(t,x) \vert ^2 (\pax \psi(t,x))^4}+ l.o.t. \, .
     \end{align}
     Taking into account the boundary conditions, we have that such parametrization has to satisfy the following boundary value problem
     \be
     \begin{cases}
         \pat \psi (t,x) =\frac{\pax^4 \psi (t,x)} {\vert \pax \gamma(t, \psi(t,x) \vert ^2 (\pax \psi(t,x))^4} + g
         \\ \psi (t, y)=y
         \\\pax^2 \psi (t,y) = - \frac{\langle \pax^2 \gamma(t, \psi(t,x)), \pax \gamma (t, \psi(t,x)) \rangle (\pax \psi)^2}{ \vert \pax \gamma (t, \psi(t,x))\vert^2}
         \\ \psi(0,x)=\psi_0(x)
     \end{cases}\label{psiproblem}
     \ee
     for $y \in \{0,1\}$ and $t \in [0, T_{\max}]$, where the function $\psi_0$ is given by Lemma~\ref{repara} and the terms in $g$ depend on the solution $\psi$, $\pax ^j \psi$ for $j \in \{1,2,3\}$ and $\pat \gamma$, $\pax^j \gamma$ for $j \in \{1,2,3,4\}$. From the computation above, it follows that the function $\gamma$ and its time-space derivatives depend also on $\psi$. To remove this dependence, we consider the associated problem for the inverse of $\psi$, that is $\xi (t, \cdot ) = \psi^{-1} (t, \cdot)$. So, the differentiation rules
\begin{align}
\paz \xi (t,z) = &\pax \psi (t, \xi (t, z))^{-1}
\\\paz^2 \xi (t,z)=& - (\paz \xi(t,z))^3 \pax^2 \psi(t,\xi(t,z))
\\\paz^3 \xi (t,z)=& 3 \frac{(\paz^2 \xi(t,z))^2}{\paz \xi(t,z)}- (\paz \xi(t,z))^4 \pax^3 \psi(t,\xi(t,z))
\\\paz^4 \xi (t,z)=&- 15 \frac{ (\paz^2 \xi(t,x))^3}{(\paz \xi(t,x))^2}
 +10\frac{\paz^2 \xi (t,z) \paz^3 \xi (t,z)}{\paz \xi(t,z)}- (\paz \xi (t,z))^5\pax^4 \psi(t, \xi(t,z))
  \end{align}
yield the evolution equation  \begin{align} 
         \pat \xi (t,z) = & - \frac{ \langle \pat \sigma (t, z) ,\paz \sigma (t, z) \rangle }{\vert \paz \sigma (t, z) \vert ^2} \paz \xi (t,z) +\frac{\langle \paz^4 \sigma (t, z),\paz \sigma (t, z) \rangle  }{\vert \paz \sigma(t, z) \vert ^6} \paz \xi (t,z)
         \\- & \frac{6 \langle \paz^3 \sigma (t, z), \paz \sigma (t, z) \rangle }{\vert \paz \sigma(t, z) \vert ^6 } \paz^2 \xi (t,z)
          +\frac{3 \langle \paz^2 \sigma (t, z), \paz \sigma (t, z) \rangle }{\vert \paz \sigma(t, z) \vert ^6 } \frac{(\paz^2 \xi (t,z))^2}{\paz \xi (t,z)}
          \\&+\frac{ \langle \paz^2 \sigma (t, z), \paz \sigma (t, z) \rangle}{\vert \paz \sigma(t, z) \vert ^6 } \left ( -4 \paz^3 \xi (t,z) + \frac{12 (\paz^2\xi (t,z))^2}{\paz \xi (t,z)}\right)
      \\&+ \frac{1} {\vert \paz \sigma(t, z) \vert ^2 } \left( - \paz^4 \xi (t,z) +\frac{10 \paz^2 \xi (t,z) \paz^3 \xi (t,z)}{\paz \xi (t,z)} - \frac{15(\paz^2 \xi (t,z))^3}{(\paz \xi (t,z))^2}\right)+ l.o.t. \, .
  \end{align}
  Hence, we obtain the following system for $\xi$
  \be
  \begin{cases}
         \pat \xi (t,z) =-\frac{ \paz^4 \xi (t,z)} {\vert \paz \sigma(t, z )\vert ^2 } +g
         \\ \xi(t,y)= y
         \\ \paz^2 \xi (t,y)= \frac{\langle \paz^2 \sigma (t,y), \paz \sigma (t,y) \rangle \paz \xi (t,y)}{\vert \paz \sigma(t,y) \vert^2}
         \\ \xi(0,z)= \psi_0^{-1}(z)
  \end{cases}\label{xiproblem}
  \ee
  where $g$ is a non-linear smooth function which depends on $\pax^j \xi$ for $j \in \{1,2,3\}$, $\paz^ \sigma$ for $j \in \{1, 2,3,4\}$, $\pat \sigma$. We now observe that the system~\eqref{xiproblem} has a very similar structure as~\eqref{linearproblgeneral}, hence, after linearize, we apply the linear theory developed by Solonnikov in~\cite{solonnikov1} and we get well–posedness. Contraction estimates allow us to conclude the existence and uniqueness of solution with a fixed-point argument. Reversing the above argumentation, we obtain that the function $\psi$ solves system~\eqref{psiproblem}.
\\Then, $\sigma_t$ is a solution to system~\eqref{analyticprobl}. Indeed, the motion equation follows from~\eqref{psiproblem} and the geometric evolution of $\gamma_t$ in normal direction. The geometric boundary conditions, namely attachment, curvature, and third-order conditions, are satisfied as $\gamma_t$ is a solution to the geometric problem. Moreover, the boundary conditions in system~\eqref{psiproblem} ensure that $\sigma_t$ satisfies the second order condition. 
\\Thus, by uniqueness of the analytic problem proved in Theorem~\ref{existenceanalyticprob}, $\sigma_t$ (that is $\gamma_t$ up to reparametrization) and $\widetilde \gamma_t$ need to coincide on a possibly small time interval.
\end{proof}

\section{Curvature bounds}
To simplify the notation, we introduce the following polynomials.
\begin{dfnz}\label{polinomi}
For $h \in \N$, we denote by $\pol_\sigma^h(k)$
a polynomial in $k,\dots,\pas^h k$ with constant 
coefficients in $\mathbb{R}$ such that every monomial it contains is of the form
\begin{equation*}
C \prod_{l=0}^h	(\pas^lk)^{\alpha_l}\quad\text{ with} \quad \sum_{l=0}^h(l+1)\alpha_l = \sigma\,,
\end{equation*}
where $\alpha_l\in\mathbb{N}$ for $l\in\{0,\dots,h\}$ and $\alpha_{l_0}\ge 1$ 
for at least one index $l_0$.
\end{dfnz}
\begin{rem}\label{prop-polinomi}
One can easily prove that
\begin{align}
\partial_s\left(\pol_\sigma^h( k)\right)&=\pol_{\sigma+1}^{h+1}( k)\,,\nonumber\\
\mathfrak{p}_{\sigma_1}^{h_1}(k)\mathfrak{p}_{\sigma_2}^{h_2}
(k)&=\mathfrak{p}_{\sigma_1+\sigma_2}^{\max\{h_1,h_2\}}(k)\,,
\\
\mathfrak{p}_\sigma^{h_1}(k)+ \mathfrak{p}_\sigma^{h_2}(k) &= \mathfrak{p}_\sigma^{\max\{h_1,h_2\}}(k). \label{calcpol} 
\end{align}
Moreover, following the arguments in~\cite{ManPluPozSurvey}, it holds
\be\label{dtpolinomi}
\pat \left ( \pol_{\sigma}^h (k) \right)  = \pol_{\sigma+4}^{h+4} (k)+ \Lambda \pol_{\sigma+1}^{h+1} (k)+ \mu \pol_{\sigma+2}^{h+2} (k) \, .
\ee
\end{rem} 
\begin{lemma}\label{evoluzionigeom2}
If $\gamma$ satisfies~\eqref{motionequationtang}, 
then for any $j \in \N$ the 
$j$-th
derivative of scalar curvature of $\gamma$ satisfies
\begin{align}
\partial_t\partial_s^j k
&=-2\partial_{s}^{j+4}k	
-5k^2\partial_s^{j+2}k
+\mu\,\partial_s^{j+2}k
+\Lambda\partial_{s}^{j+1}k
+\mathfrak{p}_{j+5}^{j+1}\left(k\right)+
\mu\,\mathfrak{p}_{j+3}^{j}(k) \, .
\label{derivativekt}
\end{align}
\end{lemma}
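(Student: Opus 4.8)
The plan is to argue by induction on $j$, using the commutation rule $\partial_t\partial_s = \partial_s\partial_t + (kV - \partial_s\Lambda)\partial_s$ from Lemma~\ref{evoluzionigeom} together with the algebra of the polynomials $\mathfrak{p}_\sigma^h$ recorded in Remark~\ref{prop-polinomi}.

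For the base case $j=0$ I would simply rewrite the evolution equation~\eqref{kt} for $\partial_t k$ in the claimed form. Indeed, the terms $-2\partial_s^4 k$, $-5k^2\partial_s^2 k$, $\mu\partial_s^2 k$ and $\Lambda\partial_s k$ already appear explicitly; the remaining terms $-6k(\partial_s k)^2$ and $-k^5$ both have weight $\sigma=5$ and top order at most $1$, so by the sum rule in~\eqref{calcpol} they combine into a single $\mathfrak{p}_5^1(k)$, while $\mu k^3$ has weight $3$ and top order $0$, i.e. $\mu\,\mathfrak{p}_3^0(k)$. This is exactly~\eqref{derivativekt} for $j=0$.

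For the inductive step, assuming~\eqref{derivativekt} for some $j$, I apply the commutation rule to $f=\partial_s^j k$, obtaining
\[
\partial_t\partial_s^{j+1}k = \partial_s\bigl(\partial_t\partial_s^j k\bigr) + \left(kV - \partial_s\Lambda\right)\partial_s^{j+1}k .
\]
Differentiating the inductive hypothesis in $s$ produces the leading term $-2\partial_s^{j+5}k$, the term $-5k^2\partial_s^{j+3}k$ (together with $-10\,k\,\partial_s k\,\partial_s^{j+2}k$, which has weight $(j+1)+5$ and top order $j+2$), the term $\mu\,\partial_s^{j+3}k$, and $\partial_s\Lambda\,\partial_s^{j+1}k + \Lambda\,\partial_s^{j+2}k$; by Remark~\ref{prop-polinomi} the $\partial_s$-derivatives of the two polynomial remainders become $\mathfrak{p}_{(j+1)+5}^{(j+1)+1}(k)$ and $\mu\,\mathfrak{p}_{(j+1)+3}^{(j+1)}(k)$. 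The crucial point is that the contribution $+\partial_s\Lambda\,\partial_s^{j+1}k$ coming from differentiating $\Lambda\,\partial_s^{j+1}k$ cancels \emph{exactly} against the $-\partial_s\Lambda\,\partial_s^{j+1}k$ supplied by the commutator; this cancellation is essential, since $\Lambda$ is a general (merely continuous) function and cannot be absorbed into a geometric polynomial $\mathfrak{p}_\sigma^h(k)$. Finally I expand $kV\partial_s^{j+1}k=(-2k\partial_s^2 k - k^4 + \mu k^2)\,\partial_s^{j+1}k$ and check weights: the first two summands have weight $(j+1)+5$ and top order at most $j+2$, hence are absorbed into $\mathfrak{p}_{(j+1)+5}^{(j+1)+1}(k)$, while $\mu k^2\partial_s^{j+1}k$ has weight $(j+1)+3$ and top order $j+1$, i.e. $\mu\,\mathfrak{p}_{(j+1)+3}^{(j+1)}(k)$. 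Collecting everything yields~\eqref{derivativekt} with $j$ replaced by $j+1$.

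The only genuine subtlety, and the step I would be most careful about, is precisely this cancellation of the $\partial_s\Lambda$ terms, together with the accompanying bookkeeping of the weight $\sigma$ and the top-order index $h$ for every monomial that is generated. Once one trusts the algebraic rules of Remark~\ref{prop-polinomi} and the commutation rule, no analytic difficulty remains and the verification is entirely routine.
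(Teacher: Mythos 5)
Your proposal is correct and follows essentially the same route as the paper: induction on $j$, with the base case read off from~\eqref{kt} and the inductive step obtained by applying the commutation rule of Lemma~\ref{evoluzionigeom} to $\partial_s^j k$, where the $\partial_s\Lambda\,\partial_s^{j+1}k$ term produced by differentiating $\Lambda\partial_s^{j+1}k$ cancels against the commutator contribution, and the remaining terms from $kV\partial_s^{j+1}k$ are absorbed into $\mathfrak{p}_{j+6}^{j+2}(k)$ and $\mu\,\mathfrak{p}_{j+4}^{j+1}(k)$ exactly as in the paper. Your weight bookkeeping is in fact slightly more careful than the paper's (whose base case contains the harmless typo $\mu\,\mathfrak{p}_4^0(k)$ in place of $\mu\,\mathfrak{p}_3^0(k)$).
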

\begin{proof}
For $j=0$ we have
\begin{align}\pat k &=-2\partial_{s}^{4}k-5k^{2}\partial_{s}^{2}k-6k\left(\partial_{s}k\right)^{2}
+\Lambda\partial_{s}k-k^{5}+\mu \left( \partial_{s}^{2}k+k^3\right)
\\&= -2\partial_{s}^{4}k-5k^{2}\partial_{s}^{2}k + \mu \pas^2 k + \Lambda \pas k + \pol_5^1(k) + \mu \pol_4^0(k)\,.
\end{align} 
Then, assuming that relation~\eqref{derivativekt} is true for $j$, we show that 
\begin{align}
    \pat \pas^{j+1} k =& \pat \pas \pas^{j} k = \pas \pat \pas^{j} k + (kV-\pas \Lambda) \pas^{j+1} k
    \\=& -2 \pas^{j+5} k -10 k \pas k \pas^{j+2}k -5 k^2 \pas^{j+3} k + \mu \pas^{j+3} k + \pas \Lambda \pas^{j+1} k + \Lambda \pas^{j+2}k 
    \\&+ \pol_{j+6}^{j+2} + \mu \pol_{j+4}^{j+1} -2k \pas^2 k \pas^{j+1}k - k^4 \pas^{j+1}k + \mu k^2 \pas^{j+1}k - \pas \Lambda \pas^{j+1}k
    \\=& -2\partial_{s}^{j+5}k	
-5k^2\partial_s^{j+3}k
+\mu\,\partial_s^{j+3}k
+\Lambda\partial_{s}^{j+2}k
+\pol_{j+6}^{j+2}\left(k\right)+
\mu\,\pol_{j+4}^{j+1}(k)\, .
\end{align}
By induction, formula~\eqref{derivativekt} holds for any $j \in \N$.
\end{proof}
\subsection{Bound on $\Vert{\pas^2 k }\Vert_{L^2}$}
We aim to show that, once the following condition is satisfied, the tangential velocity behaves as the normal velocity at boundary points.
\begin{dfnz}\label{defunifnondeg}
   Let $\gamma_t$ be a maximal solution to the elastic flow in $[0,T_{\max})$. We say that $\gamma_t$ satisfies the {\em uniform non-degeneracy condition} if there exists $\rho>0$ such that
   \be\label{unifconditiontau}
    \tau_2(y) \geq \rho
    \ee
    for every $t \in[0,T_{\max})$ and $y \in \{0,1\}$.
\end{dfnz}
\begin{lemma}\label{tangent}
    Let $\gamma_t$ be a maximal solution to the elastic flow of curves subjected to boundary conditions~\eqref{navierbc}, such that the uniform non-degeneracy condition~\eqref{unifconditiontau} holds in $[0,T_{\max})$.
    Then, for every $t \in[0,T_{\max})$ and $y \in \{0,1\}$, the tangential velocity is proportional to the normal velocity, that is
    \be
    \Lambda(y)\approx \pas^2 k (y) \, .\ee
\end{lemma}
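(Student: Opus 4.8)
The plan is to exploit the attachment condition directly, since it is the only place where the vertical constraint on the endpoints enters. For $y\in\{0,1\}$ the boundary point $\gamma(t,y)$ remains on the $x$-axis for all times, i.e. $\gamma(t,y)_2=0$ identically in $t$. Because $y=0$ and $y=1$ are fixed values of the space parameter and $\gamma\in C^{\frac{4+\alpha}{4},4+\alpha}$, I can differentiate this identity in time and commute $\pat$ with evaluation at the boundary, obtaining
\[
(\pat\gamma(t,y))_2=0 .
\]

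Next I would insert the velocity decomposition. From~\eqref{motionequationtang} we have $\pat\gamma=V\nu+\Lambda\tau$, so taking the vertical component at $y$ gives
\[
V(y)\,\nu_2(y)+\Lambda(y)\,\tau_2(y)=0 .
\]
The uniform non-degeneracy condition~\eqref{unifconditiontau} guarantees $\tau_2(y)\geq\rho>0$, so I may solve for the tangential velocity and write $\Lambda(y)=-V(y)\,\nu_2(y)/\tau_2(y)$. This already exhibits $\Lambda(y)$ as a multiple of the normal velocity $V(y)$, which is the content of the statement ``the tangential velocity is proportional to the normal velocity''.

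To reach the precise form I would then simplify both factors at the boundary. Since $\nu$ is the anticlockwise rotation of $\tau$ by $\pi/2$, one has $\nu=(-\tau_2,\tau_1)$, hence $\nu_2(y)=\tau_1(y)$; and the curvature condition $k(y)=0$ reduces the normal velocity to $V(y)=-2\pas^2 k(y)-k^3(y)+\mu k(y)=-2\pas^2 k(y)$. Combining these yields
\[
\Lambda(y)=\frac{2\,\tau_1(y)}{\tau_2(y)}\,\pas^2 k(y).
\]
Because $|\tau|=1$ forces $|\tau_1(y)|\leq 1$ while $\tau_2(y)\geq\rho$, the coefficient is bounded in modulus by $2/\rho$, which is exactly the proportionality $\Lambda(y)\approx\pas^2 k(y)$ asserted.

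There is no deep obstacle here: the computation is elementary once the attachment condition is differentiated. The two points deserving care are the commutation of $\pat$ with evaluation at the fixed boundary parameter (legitimate by the regularity of $\gamma$) and the role of the \emph{uniform} non-degeneracy condition, which is what keeps $1/\tau_2(y)$ — and therefore the proportionality constant — bounded uniformly over the whole interval $[0,T_{\max})$ rather than merely pointwise in $t$.
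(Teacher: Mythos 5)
Your proof is correct and follows essentially the same route as the paper: differentiate the attachment condition in time, decompose the velocity as $V\nu+\Lambda\tau$, use the curvature condition $k(y)=0$ to reduce $V(y)$ to $-2\pas^2 k(y)$, and solve for $\Lambda(y)$ using $\tau_2(y)\geq\rho$. If anything, your justification of the bounded coefficient via $\nu_2(y)=\tau_1(y)$, $|\tau_1(y)|\leq 1$ and $\tau_2(y)\geq\rho$ is slightly more careful than the paper's, which asserts that $\nu_2$ is also bounded from below --- a claim that is neither implied by the non-degeneracy condition nor actually needed for the conclusion.
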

\begin{proof}
Since the boundary points are constrained to the $x$-axis, we have that
\be
(\pat \gamma_t)_2 (y)=-2 \pas^2 k (y) \nu_2 (y)+\Lambda(y) \tau_2(y)=0
\ee
for $y \in \{0,1\}$ and $t \in [0,T_{\max})$. By the fact that $\tau_2$ (hence, $\nu_2$) are bounded from below at boundary points, 
it follows
\be
\Lambda(y) = 2 \pas^2 k (y) \dfrac{\nu_2(y)}{\tau_2(y)} \approx \pas^2 k (y) 
\ee
for $t \in [0,T_{\max})$ and $y \in \{0,1\}$.
\end{proof}

\begin{prop}\label{propNavierj=2}
Let $\gamma_t$ be a maximal solution to the elastic flow of curves subjected to boundary conditions~\eqref{navierbc} with initial datum
$\gamma_0$, which satisfies the {\em uniform non-degeneracy condition}~\eqref{defunifnondeg}  in the maximal time interval $[0,T_{\max})$.
Then, for all $t\in [0,T_{\max})$, it holds
\begin{equation}\label{stimaNavierj=2}
\frac{d}{dt}\int_{\gamma} \vert\pas^2 k\vert^2\de  s
\leq C ( \mathcal{E}(\gamma_0)) \, .
\end{equation}
\end{prop}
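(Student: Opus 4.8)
The plan is to compute $\frac{d}{dt}\int_\gamma |\pas^2 k|^2\de s$ directly, using the evolution formula for $\pas^j k$ from Lemma~\ref{evoluzionigeom2} with $j=2$ together with the evolution of the measure $\de s$ from~\eqref{metricaevol}. This produces a leading (highest-order) term coming from $-2\pas^6 k$ paired against $\pas^2 k$, a collection of lower-order polynomial terms $\pol_\sigma^h(k)$, and boundary contributions arising each time we integrate by parts. The strategy is standard for these fourth-order flows: integrate by parts repeatedly to move derivatives around until the leading term becomes a manifestly good (negative) quantity like $-2\int_\gamma |\pas^4 k|^2\de s$, then absorb every remaining interior polynomial term into this good term plus the energy using interpolation (Gagliardo--Nirenberg) inequalities on the curve, and finally control the boundary terms.

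Concretely, I would first write
\[
\frac{d}{dt}\int_\gamma |\pas^2 k|^2\de s
=\int_\gamma 2\,\pas^2 k\,\pat\pas^2 k\de s
+\int_\gamma |\pas^2 k|^2(\pas\Lambda - kV)\de s,
\]
substitute $\pat\pas^2 k = -2\pas^6 k -5k^2\pas^4 k + \mu\,\pas^4 k + \Lambda\pas^3 k + \pol_7^3(k) + \mu\,\pol_5^2(k)$, and integrate by parts to turn $\int_\gamma \pas^2 k\,\pas^6 k\de s$ into $\int_\gamma |\pas^4 k|^2\de s$ up to boundary terms. The terms containing $\Lambda$ must be handled carefully: the tangential velocity is not a geometric quantity, but I would integrate by parts to push derivatives off $\Lambda$ and exploit that $\Lambda$ appears only through $\pas\Lambda$ and products with curvature derivatives, so that the non-geometric contributions either cancel or reduce to boundary data controlled via Lemma~\ref{tangent}. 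For the interior polynomial terms, the key tool is the interpolation estimate: any $\pol_\sigma^h(k)$ with $h\le 4$ integrates against $\pas^2 k$ to something bounded by $\varepsilon\int_\gamma|\pas^4 k|^2\de s + C(\varepsilon,\mathcal E)\,$, using that the energy $\mathcal E(\gamma_0)$ bounds $\|k\|_{L^2}$ and the length from below is guaranteed by the hypotheses, so the interpolation constants stay controlled by $\mathcal E(\gamma_0)$ alone.

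\textbf{The main obstacle} will be the boundary terms generated by integration by parts, since here the free-boundary (Navier) conditions enter in an essential and delicate way. The conditions $k(y)=0$ and the third-order condition only kill some of the boundary quantities, and the higher-order boundary terms (involving $\pas^3 k$, $\pas^4 k$, $\pas^5 k$ at the endpoints, as well as the $\Lambda$-boundary contributions) are not directly zero; they must be estimated. This is exactly where Lemma~\ref{tangent} is decisive: the uniform non-degeneracy condition lets me write $\Lambda(y)\approx \pas^2 k(y)$ at the boundary, so the tangential boundary contributions are comparable to geometric ones. I would differentiate the boundary conditions in time and in arclength to generate the needed relations among boundary derivatives of $k$, then control the boundary terms by a trace/interpolation argument — bounding pointwise boundary values of derivatives of $k$ by $\varepsilon\int_\gamma|\pas^4 k|^2\de s + C(\varepsilon,\mathcal E)$ via a one-dimensional trace inequality on the curve. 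Once every interior and boundary term is absorbed into a small multiple of $\int_\gamma|\pas^4 k|^2\de s$ plus a constant depending only on $\mathcal E(\gamma_0)$, the good sign of $-2\int_\gamma|\pas^4 k|^2\de s$ dominates and yields the stated bound $\frac{d}{dt}\int_\gamma|\pas^2 k|^2\de s\le C(\mathcal E(\gamma_0))$.
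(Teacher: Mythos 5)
Your proposal is correct and follows essentially the same route as the paper's own proof: the same expansion of the time derivative via Lemma~\ref{evoluzionigeom2} and~\eqref{metricaevol}, the same collapse of the $\Lambda$-terms into pure boundary contributions, the same lowering of the order of the boundary terms $\pas^3 k\,\pas^4 k$ and $\pas^2 k\,\pas^5 k$ by differentiating the Navier conditions in time, the same decisive use of Lemma~\ref{tangent} (i.e.\ of uniform non-degeneracy) to replace $\Lambda$ by $\pas^2 k$ at the endpoints, and the same interpolation/trace estimates to absorb all interior and boundary remainders into $-\int_\gamma\vert\pas^4 k\vert^2\de s$ plus a constant depending on $\mathcal{E}(\gamma_0)$. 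No further comparison is needed.
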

\begin{proof}
From formula~\eqref{derivativekt} we have
    \begin{align}
   \dfrac{d}{dt} \int_\gamma\vert\pas^2 k\vert^2\de  s = & \int_\gamma 2 \pas^2 k \pat \pas^2 k + (\pas^2 k )^2 (\pas \Lambda - kV)\de  s 
\\= & \int_\gamma - 4 \pas^2 k \pas^6 k - 10 k^2 \pas^2 k \pas^4 k +2 \mu \pas^2 k \pas^4 k + 2 \Lambda \pas^3 k \pas^2 k 
\\&\phantom{\int_\gamma }+ \pol_{10}^3 (k) + \mu \pol_8 ^2 (k) + (\pas^2 k )^2 (\pas \Lambda - kV) \, \de s
\\= & \int_\gamma - 4 \pas^2 k \pas^6 k - 10 k^2 \pas^2 k \pas^4 k +2 \mu \pas^2 k \pas^4 k \\&\phantom{\int_\gamma }+ 2 \Lambda \pas^3 k \pas^2 k +2 \pas \Lambda (\pas^2 k)^2
+ \pol_{10}^3 (k) + \mu \pol_8 ^2 (k) \de s\, .
\end{align}
Thus, the terms involving the tangential velocity can be written as
\be\label{intj=2terminiconT}
 \int_\gamma \pas \Lambda (\pas^2 k )^2 + 2 \Lambda \pas^2 k \pas^3 k \de  s =  \int_\gamma \pas ( \Lambda (\pas^2 k )^2)  \de  s=  \Lambda (\pas^2 k )^2 \Big \vert_0 ^1 \, .
\ee
Moreover, integrating by parts the other terms, we get
\begin{align}
   \dfrac{d}{dt} \int_\gamma\vert\pas^2 k\vert^2\de  s=&
\int_\gamma -4(\pas^4 k)^2  - 2 \mu(\pas^3k)^2 + \pol_{10}^3 (k) + \mu \pol_8 ^2 (k) \de s
 \\& +  \Lambda (\pas^2 k )^2 \Big \vert_0 ^1 +4 ( \pas^3 k \pas^4 k- \pas^2 k \pas^5 k )\Big \vert_0 ^1  -10 k^2 \pas^2 k \pas^3 k \Big \vert_0 ^1 
 \\&- 12 (\pas k )^3 \pas^2 k \Big \vert_0 ^1+ 2 \mu \pas^2 k \pas^3 k \Big \vert_0 ^1 \, .\label{intj=2}
\end{align}
Using Navier boundary conditions, the boundary terms in equation~\eqref{intj=2} reduce to
\be\label{intj=2Navier}
  \Lambda (\pas^2 k )^2 \Big \vert_0 ^1 +4 ( \pas^3 k \pas^4 k- \pas^2 k \pas^5 k )\Big \vert_0 ^1   - 12 (\pas k )^3 \pas^2 k \Big \vert_0 ^1+ 2 \mu \pas^2 k \pas^3 k \Big \vert_0 ^1 \, .
\ee
We aim to lower the order of the second and third terms in~\eqref{intj=2Navier}. In particular, differentiating in time the condition $k(y)=0$ using relation~\eqref{kt}, we have
\be\label{j=2Navier1}
4 \pas^3 k \pas^4 k= 2 \Lambda \pas k \pas^3 k+2 \mu \pas^2 k \pas^3 k \, .
\ee
From conditions in~\eqref{navierbc}, it follows
\be
\pat \langle \gamma, 2 \pas k \nu - \mu \tau \rangle = \langle V \nu + \Lambda \tau, \pat(2 \pas k \nu - \mu \tau )\rangle=0\, ,
\ee
then, computing the scalar production using~\eqref{derivativekt}, we obtain
\begin{align}
 0=&-2 \pat \pas k V + 2 \Lambda \pas k \pas V + \mu V \pas V
\\= & 4 \pat \pas k \pas^2 k + 2 \Lambda \pas k (-2 \pas^3 k + \mu \pas k) -2\mu \pas^2 k (-2 \pas^3 k + \mu \pas k)
\\= & 4 \pas \pat k \pas^2 k- 4 \pas \Lambda\pas k \pas^2 k + 2 \Lambda \pas k (-2 \pas^3 k + \mu \pas k) 
\\&-2\mu \pas^2 k (-2 \pas^3 k + \mu \pas k)
\\= & -8 \pas^2 k \pas^5k  - 24 (\pas k)^3 \pas^2 k + 4\pas \Lambda \pas k \pas^2 k+ 4\Lambda (\pas^2 k)^2 
\\&+4\mu \pas ^2 k \pas^3 k- 4 \pas \Lambda\pas k \pas^2 k
\\&+ 2 \Lambda \pas k (-2 \pas^3 k + \mu \pas k) -2\mu \pas^2 k (-2 \pas^3 k + \mu \pas k)
\\=  & -8 \pas^2 k \pas^5k  - 24 (\pas k)^3 \pas^2 k + 4\Lambda (\pas^2 k)^2 -4 \Lambda \pas k \pas^3 k  
\\& +8\mu \pas ^2 k \pas^3 k + 2 \mu \Lambda (\pas k)^2 -2 \mu^2 \pas k \pas^2 k\, ,
\end{align}
that is,
\be\label{j=2Navier2}
-4 \pas^2 k \pas^5k =  12 (\pas k)^3 \pas^2 k - 2\Lambda (\pas^2 k)^2 +2\Lambda \pas k \pas^3 k 
 -4\mu \pas ^2 k \pas^3 k - \mu \Lambda (\pas k)^2 +\mu^2 \pas k \pas^2 k\, .
\ee
Hence, replacing the terms~\eqref{j=2Navier1} and~\eqref{j=2Navier2} in~\eqref{intj=2Navier}, we obtain
\begin{align}
   \dfrac{d}{dt} \int_\gamma\vert\pas^2 k\vert^2\de  s=&
\int_\gamma -4(\pas^4 k)^2 - 2 \mu(\pas^3k)^2 + \pol_{10}^3 (k) + \mu \pol_8 ^2 (k) \de s
 \\& -\Lambda (\pas^2 k )^2 \Big \vert_0 ^1 + 4 \Lambda \pas k \pas^3 k \Big \vert_0 ^1  -  \mu \Lambda(\pas k)^2 \Big \vert_0 ^1 + \mu^2 \pas k \pas^2 k\Big \vert_0 ^1\, .\label{intj=2final}
\end{align}
We now recall that $\Lambda$ is proportional to $\pas^2k$ at boundary points (see Lemma~\ref{tangent}), hence it follows that $\Lambda\pol_\sigma^h(k)=\pol_{\sigma+3}^{\max\{2, h\}}(k)$. Thus, we have
\begin{align}
   \dfrac{d}{dt} \int_\gamma\vert\pas^2 k\vert^2\de  s = &
-4\Vert \pas^4 k\Vert_{L^2(\gamma)}^2 - 2 \mu \Vert \pas^3k\Vert^2_{L^2(\gamma)} +  \int_\gamma\pol_{10}^3 (k) + \mu \pol_8 ^2 (k) \de s
 \\& + \pol_9^3(k) \Big \vert_0 ^1 + \mu \pol_7^3(k) \Big \vert_0 ^1 + \mu^2 \pol_5^2(k)\Big \vert_0 ^1\, .
\end{align}
By means of Lemma~4.6 and Lemma~4.7 in~\cite{ManPluPozSurvey}, for any $\varepsilon>0$ we have
\begin{align}
\int_{\gamma}^{} |\mathfrak{p}_{10}^{3}\left(k\right)|\de  s
\leq & \varepsilon \lVert\partial_s^{4}k\rVert_{L^2}^2+C(\varepsilon,\ell(\gamma))\left(\lVert k\rVert_{L^2}^2+\lVert k\rVert^{\Theta_1}_{L^2}\right)\,,\nonumber\\
\int_{\gamma}^{}|\mathfrak{p}_{8}^2\left(k\right)|\de  s
\leq& \varepsilon \lVert\partial_s^{3}k\rVert_{L^2}^2
+C(\varepsilon,\ell(\gamma))\left(\lVert k\rVert_{L^2}^2+C\lVert k\rVert^{\Theta_2}_{L^2}\right)\,,\nonumber
 \\\pol_{9}^{3}(k)(y)\vert
\leq &\varepsilon \lVert\pas^{4}k\rVert_{L^2}^2+C(\varepsilon,\ell(\gamma))\left(\lVert k\rVert_{L^2}^2+\lVert k\rVert^{\Theta_3}_{L^2}\right)\,,\nonumber\\
\pol_{7}^{3}(k)(y)
\leq &\varepsilon \lVert\pas^{4}k\rVert_{L^2}^2
+C(\varepsilon,\ell(\gamma))\left(\lVert k\rVert_{L^2}^2+C\lVert k\rVert^{\Theta_4}_{L^2}\right)\,,
\nonumber\\
\pol_{5}^{2}(k)(y)
\leq& \varepsilon \lVert\pas^{3}k\rVert_{L^2}^2
+C(\varepsilon,\ell(\gamma))\left(\lVert k\rVert_{L^2}^2+C\lVert k\rVert^{\Theta_5}_{L^2}\right)\, , 
\end{align}
for some exponents $\Theta_i >2$ with $i = 1 , \dots,5.$
\\Hence, we get
\be
   \dfrac{d}{dt} \int_\gamma\vert\pas^2 k\vert^2\de  s \leq 
-C\left ( \Vert \pas^4 k\Vert_{L^2(\gamma)}^2 + \mu \Vert \pas^3k\Vert^2_{L^2(\gamma)}\right) +C \left(\Vert k^2 \Vert^2_{L^2(\gamma)}+ \Vert k^2 \Vert^\Theta_{L^2(\gamma)} \right)
\ee
for some exponent $\Theta>2$ and constant $C$ which depend on $\ell(\gamma)$. Using the energy monotonicity proved in Proposition~\ref{energydecreases}, we conclude that
\be
   \dfrac{d}{dt} \int_\gamma\vert\pas^2 k\vert^2\de  s \leq C(\E(\gamma_0))\, .
\ee
\end{proof}

\subsection{Bound on $\Vert{\pas^6 k}\Vert _{L^2}$}
We observe that since~\eqref{motionequationtang} is a parabolic fourth-order equation, after having controlled the second-order derivative of the curvature, it is natural to control the sixth-order derivative of the curvature. Then, using interpolation inequalities, we get estimates for all the intermediate orders.
Before doing that, we notice that the elastic flow of curves becomes instantaneously smooth. More precisely, following the proof presented in~\cite{ManPluPozSurvey} in the case of closed curves (both using the so-called Angenent’s parameter trick~\cite{angenent,angen3,daprato-grisvard} and the classical theory of linear parabolic equations~\cite{solonnikov1}), one can show that given a solution to the elastic flow in a time interval $[0,T]$, then it is smooth for positive times, in the sense that it admits a $C^\infty$-parametrization in the interval $[\varepsilon,T]$ for every $\varepsilon \in (0,T)$. 

\medskip 

From now on, we denote by 
\be\label{velocity}
v:=\pat \gamma= V \nu + \Lambda \tau
\ee
the velocity of $\gamma$.
Hence, by means of integration by parts and the commutation rule in Lemma~\ref{evoluzionigeom}, we get the following identity
\begin{align}
\dfrac{d}{dt} \dfrac{1}{2} \int_\gamma \vert \pat^\perp v \vert ^2 \de  s =&- 2\int_\gamma \vert (\pas^\perp )^2 (\pat^\perp v) \vert ^2 \de  s 
\dfrac{1}{2}  \int_\gamma \vert\pat^\perp v \vert ^2 (\pas \Lambda-kV)  \de  s+ \int_\gamma \langle Y  , \pat^\perp v \rangle \de  s  
\\&
 - 2\langle  \pat^\perp v ,(\pas^\perp )^3 (\pat^\perp v)\rangle \Big \vert_0^1 + 2\langle \pas^\perp  (\pat^\perp v) , (\pas^\perp )^2 (\pat^\perp v) \rangle \Big \vert_0^1 \, , \label{eqcruciallemmaforv}
\end{align}
where we denoted by
\be\label{defY}
Y:= \pat ^\perp (\pat ^\perp v) + 2 (\pas ^\perp )^4 (\pat ^ \perp v)\, .
\ee
\\Before proceeding, we prove the following lemma, which gives estimates for some special family of polynomials.
\begin{lemma}\label{stimeintegraliebordo}
Let $\gamma:[0,1]\to\R^2$ be a smooth regular curve.
For all $j \leq 7 $, if the polynomial $\pol_{\sigma(j)}^j (k)$ defined as in Definition~\ref{polinomi}
satisfies one of the following conditions:
\begin{enumerate}[label=(\roman*)]
    \item\label{i} $\sigma(j) \geq 2(l+1)$ for all $l \leq j$, 
    \item\label{ii} $\sigma(j) \geq 2(l+1)$ for all $l \leq j-1$ and $(j+1) \leq \sigma(j) <  2(j+1)$,
\end{enumerate}
and
\be\label{iii}
\sigma(j) - \sum_{l=0}^j \alpha_l < 15\, ,
\ee 
then, there exists a constant $C$ and an exponent $\Theta>2$ such that
\be
\int_{\gamma}^{} |\mathfrak{p}_{\sigma(j)}^{j}\left(k\right)|\de  s
\leq \varepsilon \lVert\partial_s^{8}k\rVert_{L^2}^2+C(j,\varepsilon,\ell(\gamma))\left(\lVert k\rVert_{L^2}^2+\lVert k\rVert^{\Theta}_{L^2}\right)\,.\label{stimabis}
\ee
Similarly, for all $j \leq 7 $ and 
\be 
 \sigma'(j) - \sum_{l=0}^j \alpha_l < 16\, ,
\ee
there exists a constant $C$ and an exponent $\Theta'>2$ such that  for $y \in \{0,1\}$ it holds
\be
 \vert \pol_{\sigma'(j)}^{j}(k)(y)\vert
\leq \varepsilon \lVert\pas^{8}k\rVert_{L^2}^2+C(j,\varepsilon,\ell(\gamma))\left(\lVert k\rVert_{L^2}^2+\lVert k\rVert^{\Theta'}_{L^2}\right)\, . \label{stimabtbis}
\ee
\end{lemma}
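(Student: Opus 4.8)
The plan is to reduce to a single monomial and then run the standard Gagliardo--Nirenberg/Hölder/Young scheme, exactly as in the interpolation lemmas of the cited survey but with top order $8$. By the triangle inequality and \eqref{calcpol} it suffices to prove \eqref{stimabis} for one monomial $C\prod_{l=0}^{j}(\pas^l k)^{\alpha_l}$ with $\sum_l(l+1)\alpha_l=\sigma(j)$; I write $m:=\sum_l\alpha_l$ for its degree and note that $\sum_l l\,\alpha_l=\sigma(j)-m$ is precisely the quantity bounded in \eqref{iii}.

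For the interior estimate I would first split by Hölder's inequality, choosing exponents $q_l$ with $\sum_l \alpha_l/q_l=1$, so that $\int_\gamma\prod_l|\pas^l k|^{\alpha_l}\de s\le\prod_l\Vert\pas^l k\Vert_{L^{q_l}}^{\alpha_l}$, and then apply to each factor the interpolation inequality on a curve of finite length, $\Vert\pas^l k\Vert_{L^{q_l}}\le C\Vert\pas^8 k\Vert_{L^2}^{\theta_l}\Vert k\Vert_{L^2}^{1-\theta_l}+C\Vert k\Vert_{L^2}$ with $\theta_l=(l+\tfrac12-\tfrac{1}{q_l})/8$. Multiplying and using $\sum_l\alpha_l/q_l=1$, the total power of $\Vert\pas^8 k\Vert_{L^2}$ collapses to
\[
P:=\sum_l\alpha_l\theta_l=\tfrac{1}{8}\Big(\sigma(j)-\tfrac m2-1\Big),
\]
while the remaining factors produce powers of $\Vert k\Vert_{L^2}$ only. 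The hypotheses \ref{i} and \ref{ii} are exactly the two admissible regimes (respectively $\sigma(j)\ge 2(j+1)$ and $\sigma(j)\in\{2j,2j+1\}$) in which one can choose the $q_l\ge 2$ making every $\theta_l\in[l/8,1]$, the edge regime \ref{ii} forcing the top factor $\pas^j k$ to be carried at the endpoint exponent $q_j=2$; these together with \eqref{iii} are arranged so that $P<2$. Young's inequality then absorbs $\varepsilon\Vert\pas^8 k\Vert_{L^2}^2$ and leaves a constant times $\Vert k\Vert_{L^2}^2+\Vert k\Vert_{L^2}^{\Theta}$ with $\Theta=\tfrac{2(m-P)}{2-P}$, which exceeds $2$ precisely for the monomials of degree $m\ge 3$ (the quadratic ones reducing to the $\Vert k\Vert_{L^2}^2$ term).

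For the pointwise bound \eqref{stimabtbis} I would instead estimate $|\pol_{\sigma'(j)}^{j}(k)(y)|\le\prod_l\Vert\pas^l k\Vert_{L^\infty(\gamma)}^{\alpha_l}$ and use the corresponding $L^\infty$ (Agmon-type) interpolation $\Vert\pas^l k\Vert_{L^\infty}\le C\Vert\pas^8 k\Vert_{L^2}^{\theta_l}\Vert k\Vert_{L^2}^{1-\theta_l}+C\Vert k\Vert_{L^2}$ with $\theta_l=(l+\tfrac12)/8$. Here there is no Hölder normalization to subtract, so the total power of $\Vert\pas^8 k\Vert_{L^2}$ becomes $\tfrac18\big(\sigma'(j)-\tfrac m2\big)$, larger by $1/8$ than in the interior case; this is exactly why the admissible threshold relaxes from $15$ to $16$, the interior estimate gaining the extra power (the ``$-1$'' above) from integrating $\de s$ over the curve, which the pointwise estimate forgoes. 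Young's inequality again closes the bound.

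The main obstacle is the bookkeeping in the second paragraph: showing that under \ref{i}--\ref{ii} one can simultaneously arrange $\sum_l\alpha_l/q_l=1$ and $\theta_l\in[l/8,1]$ for every factor while keeping $P<2$. This is where the case split is unavoidable, and where the endpoint regime \ref{ii} -- in which $\pas^j k$ cannot be interpolated away and must be retained at its natural $L^2$ exponent -- requires the separate argument.
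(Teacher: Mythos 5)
Your proposal is correct and takes essentially the same route as the paper's proof: reduction to a single monomial, H\"older's inequality with exponents normalized by $\sum_l \alpha_l/q_l=1$, Gagliardo--Nirenberg interpolation of each factor against $\lVert \pas^8 k\rVert_{L^2}$ with the paper's exponents $\eta_l=(l+\tfrac12-1/q_l)/8$ (the regime~\ref{ii} handled, exactly as in the paper, by placing $\pas^j k$ in $L^2$ via the finite length and interpolating with $\eta_j=j/8$), followed by Young's inequality, and the boundary bound obtained by $L^\infty$-interpolation of each factor, which is the natural reading of the paper's terse ``same arguments''. Two minor remarks: your computed total exponent $P=\tfrac18\bigl(\sigma(j)-\tfrac m2-1\bigr)$ is in fact the correct value (the paper's displayed intermediate bound $\tfrac{\sigma(j)-1-\sum_l\alpha_l}{8}$ under-counts it), whereas your aside that the larger boundary exponent ``is exactly why the admissible threshold relaxes from $15$ to $16$'' has the logic reversed --- a larger power of $\lVert\pas^8 k\rVert_{L^2}$ makes the constraint stricter, not looser --- but neither remark enters the estimates themselves, which coincide with the paper's.
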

\begin{proof}
By definition, every monomial of $\pol_{\sigma(j)}^{j}(k)$ is of the form
$C\prod_{l=0}^{j} (\pas^lk)^{\alpha_l}$ 
with $$\alpha_l\in\mathbb{N} \qquad \text{and} \qquad \sum_{l=0}^{j}\alpha_l(l+1)=\sigma(j)\, .$$
We set
$$
\beta_l:=\frac{\sigma(j)}{(l+1)\alpha_l}
$$
for every $l \leq j$ and we take $\beta_l=0$ if $\alpha_l=0$.
We observe that $\sum_{l\in J}^{}\frac{1}{\beta_l}=1$, hence by H\"older inequality, we get
 \be
C\int_{\gamma}\prod_{l=0}^{j} \vert \pas^lk\vert ^{\alpha_l}\,\mathrm{d}s
\leq C\prod_{l=0}^{j}\left(\int_{\gamma}\vert\pas^lk\vert^{\alpha_l\beta_l}\,\mathrm{d}s\right)^{\frac{1}{\beta_l}}
=C\prod_{l=0}^{j}\Vert\pas^lk\Vert^{\alpha_l}_{L^{\alpha_l\beta_l}}\,.
\ee
If condition~\ref{i} holds, then $\alpha_l\beta_l \geq 2$ for every $l\in J$. Applying the Gagliardo-Nirenberg inequality (see~\cite{AdamsFournier} or~\cite{aubin0}, for instance) for every $l \leq j$ yields
\be
\Vert\pas^lk\Vert_{L^{\alpha_l\beta_l}}\leq C(l,j,\alpha_l,\beta_l, \ell(\gamma))\Vert\pas^{8}k\Vert_{L^2}^{\eta_l}\Vert k\Vert_{L^2}^{1-\eta_l}+\Vert k \Vert_{L^2}\,
\ee
where the coefficient $\eta_l$ is given by
\be
\eta_l=\frac{l+1/2-1/(\alpha_l\beta_l)}{8} \in \left[ \frac{l}{8},1\right)\,.
\label{etal}
\ee
Then, we have 
\begin{align}
C\int_{\gamma}^{}\prod_{l=0}^{j}\vert\pas^lk\vert^{\alpha_l}\,\mathrm{d}s
&\leq C\prod_{l=0 }^{j}\Vert\pas^lk\Vert^{\alpha_l}_{L^{\alpha_l\beta_l}}\\
&\leq C\prod_{l=0}^{j}\Vert k\Vert^{(1-\eta_l)\alpha_l}_{L^2}\left(\Vert\partial_s^{8}k\Vert_{L^2}+\Vert k\Vert_{L^2}\right)^{\eta_l\alpha_l}_{L^2}\\
&= C\Vert k\Vert^{\sum_{l=0}^{j}(1-\eta_l)\alpha_l}_{L^2}\left(\Vert\pas^{8}k\Vert_{L^2}+\Vert k\Vert_{L^2}\right)^{\sum_{l=0}^{j}\eta_l\alpha_l}_{L^2}.
\end{align}
Moreover, from condition~\eqref{iii}, we have
\be
\sum_{l=0}^j \eta_l\alpha_l \leq \frac{\sigma(j)-1- \sum_{l=0}^j \alpha_l}{8} < 2\,,
\ee
that is, by means of Young's inequality with $p=\frac{2}{\sum_{l=0}^j\eta_l\alpha_l}$ and $q=\frac{2}{2-\sum_{l=0}^{j}\eta_l\alpha_l}$ we obtain
\be
C\int_{\gamma}\prod_{l=0}^{j}\vert\pas^lk\vert^{\alpha_l}\,\mathrm{d}s
\leq 
\varepsilon C \left(\Vert\pas^{8}k\Vert_{L^2}+\Vert k\Vert_{L^2}\right)^{2}_{L^2}+\frac{C}{\varepsilon}\Vert k\Vert^{\Theta}_{L^2}
\label{pintfinal}\ee
where constant $C$ depends on $j$, $\varepsilon$, $\ell (\gamma)$ and $\Theta>2$.

Otherwise, if condition~\ref{ii} holds, we have $1\leq \alpha_j\beta_j<2$, that is
\be
\Vert \pas ^j k \Vert_{L^{\alpha_j \beta_j}}^{\alpha_j} \leq \Vert \pas ^j k \Vert_{L^2}^{\alpha_j} \leq \Vert\pas^{8}k\Vert^{\eta_j\alpha_j}_{L^2} \Vert k\Vert^{(1-\eta_j)\alpha_j}_{L^2}+\Vert k\Vert^{\alpha_j}_{L^2} 
\ee
where $\eta_j= \frac{j}{8}$ and we used the boundedness of $\ell(\gamma)$.

Hence, as in the previous case, we have
\begin{align}
C\int_{\gamma}\prod_{l=0}^{j}\vert\pas^lk\vert^{\alpha_l}\,\mathrm{d}s
\leq& C \prod_{l=0}^{j-1} \Vert \pas^l k \Vert_{L^{\alpha_l \beta_l}}^{\alpha_l} \Vert \pas^j k \Vert_{L^{\alpha_j \beta_j}}^{\alpha_j}
\\ \leq & C \left (\Vert\pas^{8}k\Vert^{\sum_{l=0}^{j-1}\eta_l\alpha_l}_{L^2} \Vert k\Vert^{\sum_{l=0}^{j-1}(1-\eta_l)\alpha_l}_{L^2}+\Vert k\Vert_{L^2}^{\sum_{l=0}^{j-1}{\alpha_l}} \right) 
\\ & \phantom{C}\left (\Vert\pas^{8}k\Vert^{\eta_j\alpha_j}_{L^2} \Vert k\Vert^{(1-\eta_j)\alpha_j}_{L^2}+\Vert k\Vert^{\alpha_j}_{L^2} \right) 
\\ \leq& C \Vert k\Vert_{L^2}^{\sum_{l=0}^j(1-\eta_l)\alpha_l } \left  (\Vert\pas^{8}k\Vert^{\sum_{l=0}^{j}\eta_l\alpha_l}_{L^2}+\Vert\pas^{8}k\Vert^{\sum_{l=0}^{j-1}\eta_l\alpha_l}_{L^2}
\Vert k\Vert^{\eta_j\alpha_j}_{L^2}
\right.
\\ & \left.\phantom{ C \Vert k\Vert_{L^2}^{\sum_{l=0}^j(1-\eta_l)\alpha_l } (}  + \Vert\pas^{8}k\Vert^{\eta_j\alpha_j}_{L^2}  \Vert k\Vert_{L^2}^{\sum_{l=0}^{j-1}{\eta_l \alpha_l}}+ \Vert k\Vert_{L^2}^{\sum_{l=0}^{j}{\eta_l \alpha_l}} \right)
\end{align}
where for all $l\leq j-1$ the coefficient $\eta_l$ is given by expression~\eqref{etal} and $\eta_j= \frac{j}{8}$.
Applying again Young's inequality, since $\sum_{l=0}^{j}\eta_l\alpha_l<2$ still holds, we obtain estimate~\eqref{pintfinal}. 

The second part of the lemma comes using the same arguments.
\end{proof}
    \begin{lemma}\label{Tpol}
 Let $\gamma_t$ be a maximal solution to the elastic flow of curves subjected to Navier boundary conditions~\eqref{navierbc}, such that the {\em uniform non-degeneracy condition}~\eqref{defunifnondeg} holds in the maximal time interval $[0,T_{\max})$. Then, for every $j \in \N$, it holds
\be\label{Tpoleq}
\pas^j \Lambda (y)\pol_\sigma^h(k)(y) = \pol_{\sigma+ j+3}^{\max\{h, j+2\}}(k)(y)
\ee
and
\be\label{patTpoleq}
\pat \Lambda (y)\pol_\sigma^h(k)(y) = \pol^{\max \{h,6\}}_{\sigma+7}(k)(y)+ \mu \pol^{\max \{h,4\}}_{\sigma+5}(k)(y)
\ee
for every $t \in[0,T_{\max})$ and $y \in \{0,1\}$.
\end{lemma}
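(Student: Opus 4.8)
The plan is to obtain both identities by differentiating the boundary relation supplied by Lemma~\ref{tangent}. Since the curvature condition $k(y)=0$ is preserved for every $t\in[0,T_{\max})$, the constraint $(\pat\gamma)_2(y)=V\nu_2(y)+\Lambda\tau_2(y)=0$ simplifies (using $V(y)=-2\pas^2k(y)$) to $\Lambda(y)=2\pas^2k(y)\,\nu_2(y)/\tau_2(y)$ at each boundary point and for all times. The uniform non-degeneracy condition~\eqref{unifconditiontau} gives $\tau_2(y)\ge\rho>0$, so the geometric factor $\nu_2/\tau_2$ and all of its derivatives remain bounded; in particular $\Lambda(y)$ is itself of type $\pol_3^2(k)(y)$, which is the base case ($j=0$) of~\eqref{Tpoleq}. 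Both formulas then amount to differentiating this relation $j$ times in space, respectively once in time, and reading off the resulting weight with the polynomial calculus of Remark~\ref{prop-polinomi}.

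For the spatial identity~\eqref{Tpoleq} I would apply the Leibniz rule to $\Lambda(y)=2\pas^2k(y)\,\nu_2(y)/\tau_2(y)$. The key observation is that every spatial derivative of the bounded factor $\nu_2/\tau_2$ produces, through the Serret--Frenet relations $\pas\tau=k\nu$ and $\pas\nu=-k\tau$ from~\eqref{SerretFrenet} (whence $\pas\tau_2=k\nu_2$, $\pas\nu_2=-k\tau_2$, so that $\pas(\nu_2/\tau_2)=-k/\tau_2^2$), a bounded coefficient times a polynomial of type $\pol_m^{m-1}(k)$ after $m\ge1$ derivatives. Coupling this with $\pas^{j-m}(\pas^2k)=\pol_{j-m+3}^{j-m+2}(k)$, the leading term ($m=0$) is $2\pas^{j+2}k\,\nu_2/\tau_2=\pol_{j+3}^{j+2}(k)$, and every remaining term has the same weight $j+3$ and order at most $j+2$; hence $\pas^j\Lambda(y)=\pol_{j+3}^{j+2}(k)(y)$. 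Multiplying by $\pol_\sigma^h(k)$ and using the product rule~\eqref{calcpol} yields $\pol_{\sigma+j+3}^{\max\{h,j+2\}}(k)(y)$. To make sense of the spatial derivatives of $\Lambda$ near the boundary I would invoke the tangential velocity fixed in the long-time argument (condition~\eqref{Lambda}, following~\cite{DaChPo19}); the lower-order contributions coming from that choice are harmless.

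For the time identity~\eqref{patTpoleq} I would differentiate $\Lambda(y)=2\pas^2k(y)\,\nu_2(y)/\tau_2(y)$ in $t$. For the geometric factor I use $\pat\tau=(\pas V+\Lambda k)\nu$ and $\pat\nu=-(\pas V+\Lambda k)\tau$ from~\eqref{pattau}--\eqref{patnu}, and for $\pat\pas^2k$ I use Lemma~\ref{evoluzionigeom2} with $j=2$. Evaluating at the boundary, where $k(y)=0$ and $\Lambda\pas^3k=\pol_3^2\pol_4^3=\pol_7^3$, one gets
\be
\pat\pas^2k(y)=-2\pas^6k+\mu\pas^4k+\pol_7^3(k)+\mu\pol_5^2(k)=\pol_7^6(k)+\mu\pol_5^4(k)\,,
\ee
so the leading contribution $2(\pat\pas^2k)\,\nu_2/\tau_2$ is of type $\pol_7^6+\mu\pol_5^4$, while the remaining term $2\pas^2k\,\pat(\nu_2/\tau_2)$ is of strictly lower type $\pol_7^3+\mu\pol_5^2$. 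Hence $\pat\Lambda(y)=\pol_7^6(k)(y)+\mu\pol_5^4(k)(y)$, and multiplying by $\pol_\sigma^h(k)$ gives~\eqref{patTpoleq}.

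The routine part is the Leibniz--Frenet expansion; the step requiring genuine care is the weight bookkeeping at the boundary. In particular one must check that, after exploiting that $k(y)\equiv0$ in time (equivalently $\pat k(y)=0$, which is precisely what keeps the differentiation of $V=-2\pas^2k-k^3+\mu k$ from generating a spurious lower-weight $\mu^2$-term), no term of weight below those stated survives unabsorbed, and that the boundary values of $\pas^j\Lambda$ — which depend on the tangential velocity selected in~\eqref{Lambda} — are indeed controlled by $\pol_{j+3}^{j+2}$. This is exactly where the uniform non-degeneracy condition is indispensable, since it keeps $\nu_2/\tau_2$ and all its derivatives bounded throughout the evolution.
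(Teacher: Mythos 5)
Your proposal is correct and follows essentially the same route as the paper's own proof: both start from the boundary identity $\Lambda(y)=2\pas^2 k(y)\,\nu_2(y)/\tau_2(y)$ of Lemma~\ref{tangent}, identify $\Lambda(y)$ with $\pol_3^2(k)(y)$, differentiate in space and in time, and conclude with the product and sum rules of Remark~\ref{prop-polinomi}. Your explicit Leibniz--Frenet expansion of the non-constant factor $\nu_2/\tau_2$ (using $\pas(\nu_2/\tau_2)=-k/\tau_2^2$) and your direct computation of $\pat\pas^2k(y)$ via Lemma~\ref{evoluzionigeom2} with $k(y)=0$ and $\pat k(y)=0$ merely unpack what the paper compresses into a direct application of Remark~\ref{prop-polinomi} and formula~\eqref{dtpolinomi}, and both texts share the same implicit convention that the spatial derivatives $\pas^j\Lambda(y)$ are controlled through the chosen tangential velocity~\eqref{Lambda}.
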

\begin{proof}
 By means of Lemma~\ref{tangent} and by the fact that $\tau_2$ is bounded from below, we have
    \be 
    \Lambda(y) =  \pas^2 k (y) \dfrac{\nu_2(y)}{\tau_2(y)} = \pol^2_3(k)(y)
    \ee
    for $y \in \{0,1\}$.
    Hence, by Remark~\ref{prop-polinomi}, it follows 
    \be
    \pas^j \Lambda(y) = \pol^{j+2}_{j+3}(k)(y)\, ,
    \ee
and thus,
     \be
    \pas^j \Lambda(y) \pol_\sigma^h(k)(y)=  \pol^{j+2}_{j+3}(k)(y)\pol_\sigma^h(k)(y)= \pol^{\max \{h,j+2\}}_{\sigma+j+3}(k)(y)\, .
    \ee
    Similarly, by formula~\eqref{dtpolinomi}, we have
    \be \pat \Lambda (y)= \pat \Big(\pol_3^2(k)(y)\Big)= \pol_7^6 (k)(y)+  \mu \pol_5^4 (k)(y)
    \ee
   then,
     \be
    \pat \Lambda(y) \pol_\sigma^h(k)(y)= \pol^{\max \{h,6\}}_{\sigma+7}(k)(y)+ \mu \pol^{\max \{h,4\}}_{\sigma+5}(k)(y)\, .
    \ee
\end{proof}
From now on, for any $t \in [0,T_{\max})$, we choose the tangential velocity $\Lambda(t,x)$ with $x \in (0,1)$ as the linear interpolation between the value at the boundary points, that is
\be\label{Lambda}
\Lambda(t,x)= \Lambda(t,0) \Big( 1 + \frac{\Lambda(t,1)-\Lambda(t,0)}{\Lambda(t,0)} \frac{1}{\ell(\gamma)} \int_0^x \vert \pa_x \gamma \vert \, dx \Big )\,.
\ee 
\begin{lemma}\label{stimelambda}
    Let $\Lambda$ be the tangential velocity defined in~\eqref{Lambda}, there exist two constants $C_1=C_1(\ell(\gamma))$ and $C_2=C_2(\E(\gamma_0),\ell(\gamma))$ such that
    \begin{align}
      \vert \pas \Lambda (t,x)\vert &\leq C_1(\vert \Lambda(t,1) \vert + \vert \Lambda(t,0) \vert) \, , \label{stimepasTinside}
    \\ \vert \pat \Lambda (t,x)\vert &  \leq C_2 \Big [ \vert \pat \Lambda (t,0)\vert +\vert \pat \Lambda (t,1)\vert + \vert \pat \Lambda (t,0)\vert \frac{\vert  \Lambda (t,1)\vert } {\vert \Lambda (t,0)\vert }
    \\&\phantom{\leq}+ \vert \Lambda(t,1)-\Lambda(t,0)\vert ^2+ \vert \Lambda(t,1)-\Lambda(t,0)\vert
    \Big] 
    \label{stimepatTinside}
\end{align}
for $t \in [0,T_{\max})$ and $x \in [0,1]$.
\end{lemma}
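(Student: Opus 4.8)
The plan is to derive both bounds by elementary differentiation of the explicit interpolation formula~\eqref{Lambda}, the only genuinely analytic ingredient being a bound on $\int_\gamma|kV|\de s$. Throughout I write $A:=\Lambda(t,0)$, $B:=\Lambda(t,1)$ and $s(t,x):=\int_0^x|\pax\gamma|\,dx$ for the arclength measured from the endpoint $x=0$, so that $0\le s\le\ell(\gamma)$ and $r:=s/\ell(\gamma)\in[0,1]$; in this notation $\Lambda=A\big(1+\tfrac{B-A}{A}\,r\big)$. For~\eqref{stimepasTinside} I would simply note that $\pas=|\pax\gamma|^{-1}\pax$ and $\pax s=|\pax\gamma|$, so the arclength derivative of~\eqref{Lambda} is the \emph{constant} $\pas\Lambda=(B-A)/\ell(\gamma)$; hence $|\pas\Lambda|\le\ell(\gamma)^{-1}\big(|\Lambda(t,1)|+|\Lambda(t,0)|\big)$ and $C_1=\ell(\gamma)^{-1}$ suffices.

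For~\eqref{stimepatTinside} the first step is to differentiate $s$ and $\ell(\gamma)$ in time. Using the evolution of the measure~\eqref{metricaevol}, i.e.\ $\pat|\pax\gamma|=(\pas\Lambda-kV)|\pax\gamma|$, I obtain $\pat s=\int_0^x(\pas\Lambda-kV)\de s$ and $\pat\ell(\gamma)=\int_\gamma(\pas\Lambda-kV)\de s$. Since $\pas\Lambda$ is constant in $x$ and $\int_\gamma\pas\Lambda\de s=B-A$, both quantities are controlled by $|B-A|+\int_\gamma|kV|\de s$; consequently, from $\pat r=\pat s/\ell(\gamma)-s\,\pat\ell(\gamma)/\ell(\gamma)^2$ and $s\le\ell(\gamma)$ I get the key pointwise bound
\be
|\pat r|\le \frac{2}{\ell(\gamma)}\Big(|B-A|+\int_\gamma|kV|\de s\Big)\,.
\ee

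Next I would differentiate the product form $\Lambda=A\big(1+\tfrac{B-A}{A}r\big)$ in time by the Leibniz rule, producing the three groups $\pat A\,\big(1+\tfrac{B-A}{A}r\big)$, $A\,\pat\!\big(\tfrac{B-A}{A}\big)\,r$ and $(B-A)\pat r$. Bounding crudely with $r\le1$ and the elementary inequality $\tfrac{|B-A|}{|A|}\le\tfrac{|B|}{|A|}+1$, the first two groups contribute exactly $|\pat\Lambda(t,0)|$, $|\pat\Lambda(t,1)|$ and $|\pat\Lambda(t,0)|\tfrac{|\Lambda(t,1)|}{|\Lambda(t,0)|}$, while the third, together with the bound for $\pat r$ above, contributes $|\Lambda(t,1)-\Lambda(t,0)|^2$ and $|\Lambda(t,1)-\Lambda(t,0)|\cdot\int_\gamma|kV|\de s$. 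Collecting the terms and absorbing constants then gives~\eqref{stimepatTinside}, \emph{provided} one knows that $\int_\gamma|kV|\de s\le C(\E(\gamma_0),\ell(\gamma))$. That the ratio $|\Lambda(t,1)|/|\Lambda(t,0)|$ is the natural quantity is consistent with Lemma~\ref{tangent}, which identifies $A$ and $B$ with $\pas^2k$ at the two endpoints.

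The main obstacle is precisely this last bound, since $V=-2\pas^2k-k^3+\mu k$ carries two derivatives of the curvature and is not controlled by the energy alone. I would split $\int_\gamma|kV|\de s\le 2\int_\gamma|k|\,|\pas^2k|\de s+\int_\gamma|k|^4\de s+\mu\int_\gamma k^2\de s$ and handle the three pieces by Cauchy--Schwarz and Gagliardo--Nirenberg interpolation: the energy monotonicity of Proposition~\ref{energydecreases} gives $\int_\gamma k^2\de s\le\E(\gamma_0)$, while the $L^2$--bound on $\pas^2k$ from Proposition~\ref{propNavierj=2} controls $\Vert\pas^2k\Vert_{L^2}$; interpolating $\Vert k\Vert_{L^4}$ between $\Vert k\Vert_{L^2}$ and $\Vert\pas^2k\Vert_{L^2}$ then closes the estimate with a constant depending only on $\E(\gamma_0)$ and $\ell(\gamma)$. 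This is the step that ties the interior and boundary behaviour of $\Lambda$ to the curvature estimates of the previous subsection, and it is where essentially all the care is required.
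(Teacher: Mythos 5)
Your proposal is correct and follows essentially the same route as the paper: both differentiate the explicit interpolation formula~\eqref{Lambda} (giving the constant $\pas\Lambda=(\Lambda(t,1)-\Lambda(t,0))/\ell(\gamma)$ for the first bound), expand $\pat\Lambda$ by the Leibniz rule using the evolution of the measure~\eqref{metricaevol} for the time derivatives of the arclength and the length, and reduce everything to the bound $\int_\gamma \vert kV\vert \de s\leq C(\E(\gamma_0),\ell(\gamma))$ obtained from Proposition~\ref{propNavierj=2}, the energy monotonicity and Gagliardo--Nirenberg interpolation. Your write-up in fact spells out the interpolation step that the paper compresses into one sentence, so no gap.
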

\begin{proof}
From~\eqref{Lambda} it easily follows that
\be\label{pasLambda}
\pas \Lambda (t,x) = \frac{\Lambda(t,1)-\Lambda(t,0)}{\ell(\gamma)} \qquad \text{and} \qquad \pas^j \Lambda(t,x) =0 \quad \text{for $j \geq 2$.}
\ee
Moreover, taking the time derivative, we get
\begin{align}
    \pat \Lambda (t,x)=& \pat \Lambda(t,0) \Big( 1 + \frac{\Lambda(t,1)-\Lambda(t,0)}{\Lambda(t,0)} \frac{1}{\ell(\gamma)} \int_0^x \vert \pa_x \gamma \vert \, dx \Big ) 
    \\&+ \frac{(\pat \Lambda(t,1)-\pat \Lambda(t,0)) \Lambda(t,0)-(\Lambda(t,1)-\Lambda(t,0)) \pat \Lambda(t,0) }{\Lambda(t,0)} \frac{1}{\ell(\gamma)} \int_0^x \vert \pa_x \gamma \vert \, dx 
    \\&-  \big(  \Lambda(t,1)-\Lambda(t,0) \big)\frac{1}{\ell^2(\gamma)} \frac{d (\ell(\gamma))}{dt}\int_0^x \vert \pa_x \gamma \vert \, dx
    \\&+ \big( \Lambda(t,1)-\Lambda(t,0)\big)\frac{1}{\ell(\gamma)} \frac{d }{dt}\int_0^x \vert \pa_x \gamma \vert \, dx 
    \\=&\pat \Lambda(t,0) \Big( 1 + \frac{\Lambda(t,1)-\Lambda(t,0)}{\Lambda(t,0)} \frac{1}{\ell(\gamma)} \int_0^x \vert \pa_x \gamma \vert \, dx \Big ) 
    \\&+ \frac{(\pat \Lambda(t,1)-\pat \Lambda(t,0)) \Lambda(t,0)-(\Lambda(t,1)-\Lambda(t,0)) \pat \Lambda(t,0) }{\Lambda(t,0)} \frac{1}{\ell(\gamma)} \int_0^x \vert \pa_x \gamma \vert \, dx 
    \\&-  \frac{\big(  \Lambda(t,1)-\Lambda(t,0) \big)^2}{\ell^2(\gamma)}\int_0^x \vert \pa_x \gamma \vert \, dx +  \frac{  \Lambda(t,1)-\Lambda(t,0) } {\ell^2(\gamma)} \int_\gamma k V \de s \int_0^x \vert \pa_x \gamma \vert \, dx
\\&+ \big( \Lambda(t,1)-\Lambda(t,0)\big)\frac{1}{\ell(\gamma)} \frac{d }{dt}\int_0^x \vert \pa_x \gamma \vert \, dx 
      \, .\label{patLambda}
\end{align}
where we used relations~\eqref{metricaevol}.
Hence, noticing that from interpolation and Proposition~\ref{propNavierj=2} it follows
\be
\int_\gamma kV \leq C(\E(\gamma_0))\, ,
\ee
we obtain the last estimate in the statement.
\end{proof}
\begin{lemma} If $\gamma$ satisfies~\eqref{motionequationtang}, then 
    \begin{align}
\pat^2 \gamma = &\Big( 4 \pas^6 k +10 k^2 \pas^4 k + \pol_7^3(k)  -4 \Lambda \pas^3 k -6  \Lambda k^2 \pas k + \Lambda^2 k
   \\&- 4 \mu \pas^4 k + \mu \pol_5^2(k)+ 2 \mu \Lambda \pas k + \mu^2 \pas^2 k + \mu^2 \pol_3^0(k) \Big) \nu 
   \\&+ \Big (\pat \Lambda + \pol_7^3(k)-2 \Lambda k \pas^3 k -3 \Lambda k^3 \pas k + \mu \pol_5^3(k)+\mu \Lambda k \pas k+ \mu^2 \pol_3^1(k)\Big) \tau\, .\label{dt2gammaesteso}
\end{align}
\end{lemma}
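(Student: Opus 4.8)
The plan is to derive the formula by differentiating the velocity $v=\pat\gamma=V\nu+\Lambda\tau$ one further time and feeding in the evolution equations already recorded in Lemma~\ref{evoluzionigeom}. Concretely, I would begin from
\[
\pat^2\gamma=\pat\big(V\nu+\Lambda\tau\big)=(\pat V)\,\nu+V\,\pat\nu+(\pat\Lambda)\,\tau+\Lambda\,\pat\tau ,
\]
and substitute the frame evolutions \eqref{pattau}--\eqref{patnu}, that is $\pat\tau=(\pas V+\Lambda k)\nu$ and $\pat\nu=-(\pas V+\Lambda k)\tau$. Separating the normal and tangential parts collapses the expression to the intermediate identity
\[
\pat^2\gamma=\big(\pat V+\Lambda\pas V+\Lambda^2 k\big)\nu+\big(\pat\Lambda-V\pas V-V\Lambda k\big)\tau ,
\]
so that the entire task reduces to expanding the two scalar coefficients.

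I would then make these coefficients explicit from $V=-2\pas^2 k-k^3+\mu k$. The spatial derivative is immediate, $\pas V=-2\pas^3 k-3k^2\pas k+\mu\pas k$, while the time derivative $\pat V=-2\,\pat\pas^2 k-3k^2\,\pat k+\mu\,\pat k$ is computed by inserting the $j=0$ and $j=2$ cases of \eqref{derivativekt} (equivalently \eqref{kt}) from Lemma~\ref{evoluzionigeom2}; crucially these already incorporate the non-commutativity of $\pat$ and $\pas$, so no separate use of the commutation rule is needed for the $\nu$-coefficient. This produces the genuinely top-order contributions $4\pas^6 k$, $k^2\pas^4 k$ and $\mu\pas^4 k$, together with the explicit $\Lambda$-linear terms $\Lambda\pas^3 k$, $\Lambda k^2\pas k$, $\Lambda^2 k$ and $\mu\Lambda\pas k$, in the normal direction. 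The tangential coefficient is handled in the same way: $V\pas V$ yields only $k$-polynomial (and $\mu$-weighted) terms, while $V\Lambda k$ supplies the $\Lambda$-linear contributions and $\pat\Lambda$ is left untouched.

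All the remaining terms would then be absorbed into the polynomial notation of Definition~\ref{polinomi} by repeatedly applying the algebra of Remark~\ref{prop-polinomi}: products add weights and differentiation raises the weight by one, cf.~\eqref{calcpol}. The substantive part of the argument is therefore pure bookkeeping, and I expect the only real pitfall to be tracking, for each monomial, both its weight $\sigma$ and its maximal derivative order $h$ so that it lands in the advertised class $\pol_\sigma^h(k)$, while keeping the powers of $\mu$ separated. A convenient safeguard is the parabolic scaling in which $\pas^l k$ carries weight $l+1$, the tangential speed $\Lambda$ weight $3$ (consistent with $\Lambda\approx\pas^2 k$) and $\mu$ weight $2$: under it every term of $\pat^2\gamma$ is homogeneous of weight $7$, which both pins down the top-order terms that must remain explicit and forces the remainders to be exactly of the form $\pol_7^3(k)$, $\mu\,\pol_5^2(k)$, $\mu^2\,\pol_3^0(k)$ in the normal direction and $\pol_7^3(k)$, $\mu\,\pol_5^3(k)$, $\mu^2\,\pol_3^1(k)$ in the tangential one. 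There is no conceptual obstacle beyond this careful accounting.
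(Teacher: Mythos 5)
Your proposal is correct and follows essentially the same route as the paper: both differentiate $v=V\nu+\Lambda\tau$ once more in time, insert the frame evolutions \eqref{pattau}--\eqref{patnu} to reach the identity $\pat^2\gamma=\big(\pat V+\Lambda\pas V+\Lambda^2k\big)\nu+\big(\pat\Lambda-V\pas V-V\Lambda k\big)\tau$, and then expand $\pat V$ and $\pas V$ via Lemma~\ref{evoluzionigeom2} together with the polynomial calculus of Remark~\ref{prop-polinomi}. The only difference is cosmetic: the paper additionally records $\pat\pas V$, $\pat^2\tau$ and $\pat^2\nu$ along the way (quantities it needs later, e.g.\ in Lemma~\ref{lemmabt2}), which, as you correctly observe, are not required to conclude this particular lemma.
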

\begin{proof}
We firstly compute
 \be\label{patV}
   \pat V = 4 \pas^6 k + \pol_7^3(k)-2 \Lambda \pas^3 k + \Lambda \pol_4^1(k) - 4 \mu \pas^4 k + \mu \pol_5^2(k) + \mu \Lambda \pas k + \mu^2 \pas^2 k + \mu^2 \pol_3^0(k) 
 \ee
 and
\begin{align}\label{paspatV}
    \pat \pas V =& 4 \pas^7 k + \pol_8^4(k) -2 \Lambda \pas^4 k + \Lambda \pol_5^2(k)+ \pas \Lambda \pol_4^1(k)-4 \mu \pas^5 k + \mu \pol_6^3(k)
    \\&+ \mu \Lambda \pas^2 k +3 \mu \pas \Lambda \pas k + \mu^2 \pas^3 k + \mu^2 \pol_4^1(k)\, .
\end{align}
Then, by means of Lemma~\ref{evoluzionigeom}, we have
\begin{align}
    \pat^2 \tau=& ( \pat\pas V + \pat \Lambda k + \Lambda \pat k)\nu - (\pas V + \Lambda k)^2 \tau
    \\=&\Big (4 \pas^7 k + \pol_8^4(k) -4 \Lambda \pas^4 k + \Lambda \pol_5^2(k)+ \pas \Lambda \pol_4^1(k)+\Lambda^2\pas k+\pat \Lambda k-4 \mu \pas^5 k + \mu \pol_6^3(k)
    \\&\phantom{\Big(}+ \mu \Lambda \pol_3^2 (k) +3 \mu \pas \Lambda \pas k + \mu^2 \pas^3 k + \mu^2 \pol_4^1(k)\Big) \nu
    \\ &+ \Big ( \pol_8^3(k) + \Lambda \pol_5^3(k)+ \Lambda^2 k^2 + \mu \pol_6^3(k)+ \mu \Lambda \pol_3^2(k) + \mu^2 \pol_4^1(k)\Big) \tau \, .\label{pat2tau}
\end{align}
Similarly, differentiating in time the relation~\eqref{patnu} we get
\be
\pat^2 \nu = -(\pas V + \Lambda k)^2 \nu - (\pat \pas V + \pat \Lambda k + \Lambda \pat k) \tau\, ,
\ee
that is
\begin{align}
    \pat^2 \nu=&\Big ( \pol_8^3(k) + \Lambda \pol_5^3(k)+ \Lambda^2 k^2 + \mu \pol_6^3(k)+\mu \Lambda \pol_3^2(k) + \mu^2 \pol_4^1(k)\Big) \nu 
    \\&- \Big (4 \pas^7 k + \pol_8^4(k) -4 \Lambda \pas^4 k + \Lambda \pol_5^2(k)+ \pas \Lambda \pol_4^1(k)+\Lambda^2\pas k+\pat \Lambda k-4 \mu \pas^5 k + \mu \pol_6^3(k)
    \\&\phantom{-\Big(}+ \mu \Lambda \pol_3^2 (k) +3 \mu \pas \Lambda \pas k + \mu^2 \pas^3 k + \mu^2 \pol_4^1(k)\Big) \tau \, .\label{pat2nu}
\end{align}

Using computations~\eqref{pat2tau} and~\eqref{pat2nu}, we obtain 
\begin{align}
\pat^2 \gamma =& (\pat V + \Lambda (\pas V + \Lambda k ) ) \nu + (\pat \Lambda - V ( \pas V -  \Lambda k)) \tau 
\\=&  \Big(\pat V+ \Lambda (-2\pas^3 k -3 k^2 \pas k + \mu \pas k)+ \Lambda^2 k\Big) \nu 
\\&+ \Big(\pat \Lambda - (-2 \pas^2 k - k^3 - \mu k)( -2 \pas^3 k -3k^3 \pas k - \mu \pas k - \Lambda k)\Big) \tau \\ {=} &\Big( 4 \pas^6 k +10 k^2 \pas^4 k + \pol_7^3(k)  -4 \Lambda \pas^3 k -6  \Lambda k^2 \pas k + \Lambda^2 k
   \\&- 4 \mu \pas^4 k + \mu \pol_5^2(k)+ 2 \mu \Lambda \pas k + \mu^2 \pas^2 k + \mu^2 \pol_3^0(k) \Big) \nu 
   \\&+ \Big (\pat \Lambda + \pol_7^3(k)-2 \Lambda k \pas^3 k -3 \Lambda k^3 \pas k + \mu \pol_5^3(k)+\mu \Lambda k \pas k+ \mu^2 \pol_3^1(k)\Big) \tau\, .
\end{align}
\end{proof}
In the following, we show that to estimate the $L^2$-norm of $\pas^6 k$ it is enough to control the $L^2$-norm of $\pat ^\perp v$. Hence, we start writing the boundary terms in~\eqref{eqcruciallemmaforv} using the curvature and its derivatives, lowering the order by means of the boundary condition.
\begin{lemma}\label{lemmabt1}
 Let $\gamma_t$ be a family of curves moving with velocity $v$ defined in~\eqref{velocity}. Then,
    \begin{equation}
\langle \pas^\perp  (\pat^\perp v) , (\pas^\perp )^2 (\pat^\perp v) \rangle = \pol_{17}^7 (k)+ \pol_{15}^7 (k) +\pol_{13}^7 (k)+ \pol_{11}^5 (k) + \mu^4 \pol_{9}^4 (k) \, .\label{btinferiorej=6}
\end{equation}
\end{lemma}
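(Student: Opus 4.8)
The plan is to reduce the vector identity to a single scalar computation and then push everything to the boundary. Since $v=\pat\gamma=V\nu+\Lambda\tau$, the normal time derivative $\pat^\perp v$ is the normal part of $\pat^2\gamma$, so by~\eqref{dt2gammaesteso} I may write $\pat^\perp v=W\nu$, where $W$ is precisely the coefficient of $\nu$ appearing there. The decisive simplification is that $\pas^\perp$ acts on a normal field $f\nu$ by $\pas^\perp(f\nu)=(\pas f)\nu$: indeed $\pas(f\nu)=(\pas f)\nu-fk\tau$ by the Serret--Frenet equation~\eqref{SerretFrenet}, and the tangential part is discarded. Iterating, $(\pas^\perp)^{j}(\pat^\perp v)=(\pas^{j}W)\nu$, so that
\[
\langle\pas^\perp(\pat^\perp v),(\pas^\perp)^2(\pat^\perp v)\rangle=(\pas W)(\pas^2 W).
\]
Thus the entire statement becomes an assertion about the single scalar $(\pas W)(\pas^2 W)$ evaluated at the endpoints $y\in\{0,1\}$.

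Next I would compute $\pas W$ and $\pas^2 W$ as functions of $k,\Lambda$ and their derivatives, using only the calculus of Remark~\ref{prop-polinomi} to track the scaling weight $\sigma$ and the order $h$: each $s$-derivative raises $(\sigma,h)$ by $(1,1)$, each factor $\Lambda$ enters as a weight-$3$, order-$2$ symbol by Lemma~\ref{tangent}, and products add weights. Already this records that the top-order content of $\pas W$ is $4\pas^7 k$ and that of $\pas^2 W$ is $4\pas^8 k$, with leading weights $8$ and $9$; hence the product will have leading weight $8+9=17$.

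The heart of the argument is the boundary evaluation. At $y\in\{0,1\}$ I would: (i) discard every monomial carrying a bare factor $k$, since $k(y)=0$; (ii) replace $\pas^{j}\Lambda(y)$ by $\pol_{j+3}^{j+2}(k)(y)$ using Lemma~\ref{Tpol}; and (iii)---the crucial step---lower the spatial order of $k$ by differentiating the Navier conditions in time. Concretely, differentiating the curvature condition $k(y,t)=0$ once and inserting~\eqref{kt} yields $\pas^4 k(y)=\pol_5^2(k)(y)+\mu\,\pol_3^2(k)(y)$; differentiating the third-order condition in time, exactly as in the passage leading to~\eqref{j=2Navier1}--\eqref{j=2Navier2}, expresses $\pas^5 k(y)$ through lower-order quantities; and differentiating $k(y,t)=0$ a second time reduces $\pas^8 k(y)$. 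This last reduction is what guarantees that, although $\pas^2 W$ a priori contains $\pas^8 k$, the differentiation order at the boundary never exceeds $7$, in agreement with the exponents in the statement. Substituting these boundary reductions into $\pas W$ and $\pas^2 W$ and multiplying, I would then collect the outcome by scaling weight---which starts at $17$ and drops in steps of two as successive $\mu$- and $\Lambda$-factors are consumed by the reductions---to assemble the five symbols $\pol_{17}^7(k)+\pol_{15}^7(k)+\pol_{13}^7(k)+\pol_{11}^5(k)+\mu^4\pol_9^4(k)$.

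The main obstacle is exactly this boundary bookkeeping: one must carry out the order-reductions for $\pas^4 k$, $\pas^5 k$ and $\pas^8 k$ coherently, confirm that no order-$8$ term survives, and verify that the weights, the $\mu$-powers, and the orders coincide term by term with the five claimed symbols. No individual step is deep---each rests on Serret--Frenet, the polynomial calculus of Remark~\ref{prop-polinomi}, Lemma~\ref{Tpol}, and time-differentiation of the boundary conditions---but the combinatorial care required to propagate $(\sigma,h)$ through the product $\pas W\cdot\pas^2 W$ is what makes this the technically delicate point.
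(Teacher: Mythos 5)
Your proposal follows essentially the same route as the paper's proof: reduce the bracket to the scalar product $(\pas W)(\pas^2 W)$ of derivatives of the normal coefficient of $\pat^2\gamma$ computed in~\eqref{dt2gammaesteso}, track everything with the polynomial calculus of Remark~\ref{prop-polinomi}, and---the crucial step, identical to the paper's---eliminate $4\pas^8 k$ at the boundary by differentiating the curvature condition $k(y,t)=0$ twice in time, before substituting $\pas^j\Lambda$ and $\pat\Lambda$ via Lemma~\ref{Tpol} and collecting by weight. The only (harmless) deviation is that you additionally propose reducing $\pas^4 k$ and $\pas^5 k$ at the boundary, which the paper does not need for this lemma since the claimed classes admit derivatives up to order $7$; note also that the third-order condition only yields a reduction of the product $\pas^2 k\,\pas^5 k$ as in~\eqref{j=2Navier2}, not of $\pas^5 k$ alone, so it is fortunate that this extra step is superfluous.
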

\begin{proof}
    By straightforward computations, we have that
\be
\pas^\perp (\pat^\perp v) =\pas(\pat v^\perp) \nu \, , \quad (\pas^\perp )^2 (\pat^\perp v) = \pas^2 (\pat v^\perp) \nu 
\ee
where $\pat v^\perp$ is the normal component of $\pat^2 \gamma$, which is computed in~\eqref{dt2gammaesteso}.
Hence, we compute
\begin{align}
\pas(\pat v^\perp)=&
4 \pas^7 k  + \pol_8^4(k)-4 \Lambda \pas^4 k - 4 \pas \Lambda \pas^3 k + \Lambda^2 \pas k -4 \mu \pas^5 k + \mu \pol_7^4(k)
\\& + 2 \mu \Lambda \pas^2 k +2 \mu \pas \Lambda \pas k+ \mu^2 \pas^3 k+ \mu^2\pol_4^1(k) 
  \label{dsperpdtperpv}
\end{align}
and
\begin{align}
\pas^2 (\pat^\perp v) =& \pol_9^5(k) + 4\Lambda \pas \Lambda \pas k
+ \Lambda \pol_6^2(k) -4 \pas^2 \Lambda \pas^3 k - 8 \pas \Lambda \pas^4 k   - \pat \Lambda \pas k \\& +\mu \pol_7^4(k)  + \mu \Lambda \pol_4^1(k)
+2 \mu \pas^2 \Lambda \pas k + 4 \mu \pas \Lambda \pas^2 k + \mu^2 \pol_5^2(k)\, ,\label{ds2perpdtperpv}
\end{align}
where in relation~\eqref{ds2perpdtperpv} we used
\begin{align}
    4 \pas^8 k =& \pol_9^5(k) + 4 \Lambda \pas^5 k + \Lambda \pol_6^2(k) - \Lambda^2 \pas^2 k - \pat \Lambda \pas k \\&+ 4 \mu \pas^6 k +\mu \pol_7^4(k) - 2 \mu \Lambda \pas^3 k + \mu \Lambda \pol_4^1(k) - \mu^2 \pas^4 k + \mu^2 \pol_5^2(k)
\end{align}
 since Navier boundary conditions hold.
 So, using expressions~\eqref{dsperpdtperpv} and~\eqref{ds2perpdtperpv}, replacing $\Lambda$ and its derivatives by means of Lemma~\ref{Tpol} and recalling that $\mu>0$ is constant, we get
\begin{equation}
\langle \pas^\perp  (\pat^\perp v) , (\pas^\perp )^2 (\pat^\perp v) \rangle = \pol_{17}^7 (k)+ \pol_{15}^7 (k) +\pol_{13}^7 (k)+ \pol_{11}^5 (k) + \mu^4 \pol_{9}^4 (k) \, .
\end{equation}
\end{proof}

\begin{lemma}\label{lemmabt2}
 Let $\gamma_t$ be a family of curves moving with velocity $v$ defined in~\eqref{velocity}. Then,
    \begin{align}
\langle  \pat^\perp v ,(\pas^\perp )^3 (\pat^\perp v)\rangle =& \langle  \pat ^\perp v , 4\pas^9 k \nu\rangle
+ \langle  \pat^\perp v , (\pas^\perp )^3 (\pat^\perp v)-4\pas^9 k \nu\ \rangle 
\\=&\pol_{17}^6 (k)+  \pol_{15}^7(k)   + \pol_{13}^7(k) +\pol_{11}^7(k) + \pol_{9}^5(k) + \pol_7^2(k)\, . \label{btsuperiorej=6}\end{align} 
\end{lemma}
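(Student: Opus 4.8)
The plan is to follow the proof of Lemma~\ref{lemmabt1} almost verbatim, turning the vector identity into a scalar one and then using the Navier boundary conditions to lower the order of the highest derivatives of the curvature that appear.

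First I would observe that $\pat^\perp v=(\pat v^\perp)\nu$ is purely normal and that, by the Serret-Frenet equation~\eqref{SerretFrenet}, $\pas^\perp(g\nu)=(\pas g)\nu$ for any scalar $g$; iterating this three times gives $(\pas^\perp)^3(\pat^\perp v)=\pas^3(\pat v^\perp)\,\nu$. Consequently $\langle\pat^\perp v,(\pas^\perp)^3(\pat^\perp v)\rangle=(\pat v^\perp)\,\pas^3(\pat v^\perp)$, and the first displayed equality in the statement is nothing but adding and subtracting the leading term $4\pas^9 k\,\nu$ of $(\pas^\perp)^3(\pat^\perp v)$. The reason for isolating this term is that, after differentiating, it is the only contribution whose order exceeds the threshold $7$ of Lemma~\ref{stimeintegraliebordo}, and hence the only one that needs a genuine boundary reduction.

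Then I would compute $\pas^3(\pat v^\perp)$ by differentiating once more the expression~\eqref{ds2perpdtperpv} for $\pas^2(\pat^\perp v)$ (equivalently, by differentiating three times the normal component of~\eqref{dt2gammaesteso}): its leading term is $4\pas^9 k$, while the remainder has order at most $7$, the only seventh-order term being $-4\mu\pas^7 k$, and every other term carries a derivative of $\Lambda$ which I rewrite as a curvature polynomial via Lemma~\ref{Tpol}. For the second piece $\langle\pat^\perp v,(\pas^\perp)^3(\pat^\perp v)-4\pas^9 k\,\nu\rangle=(\pat v^\perp)\bigl(\pas^3(\pat v^\perp)-4\pas^9 k\bigr)$ I would multiply by $\pat v^\perp=4\pas^6 k+\dots$, discard every monomial containing an undifferentiated factor $k$ (since $k(y)=0$), and collect the weights through Remark~\ref{prop-polinomi}; this produces exactly $\pol_{15}^7(k)+\pol_{13}^7(k)+\pol_{11}^7(k)+\pol_9^5(k)+\pol_7^2(k)$.

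The hard part is the first piece $4\pas^9 k\,(\pat v^\perp)$, because $\pas^9 k$ must be removed at the boundary. Here the crucial point is that the third-order Navier condition $\bigl(-2\pas k\,\nu+\mu\tau\bigr)_1=0$ holds for every $t$, so I would differentiate this scalar identity twice in time, using Lemma~\ref{evoluzionigeom} for $\pat\nu,\pat\tau$ and~\eqref{derivativekt} for the time derivatives of $\pas^jk$. The resulting relation has $8\pas^9 k\,\nu_1$ as its top-order term; since $\nu_1=-\tau_2$ is bounded away from zero by the uniform non-degeneracy condition~\eqref{defunifnondeg}, it can be solved cleanly for $\pas^9 k$. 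A weight count then shows that the pure-curvature (weight-ten) part of the remainder has order at most $6$, whereas all the order-seven terms carry a factor $\mu$ and hence strictly lower weight, so that at the boundary $\pas^9 k=\pol_{10}^6(k)+\mu\,\pol_8^7(k)+\dots$; multiplying by $4\pas^6 k$ yields the top term $\pol_{17}^6(k)$ together with further $\pol_{15}^7(k)$-type terms, and adding the second piece gives the claimed identity. I expect this weight bookkeeping to be the only genuinely delicate step: one must use that the constant $\mu$ carries no curvature weight, which is exactly what forces the leading polynomial to be $\pol_{17}^6$ rather than $\pol_{17}^7$.
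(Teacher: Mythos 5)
Your overall strategy is the paper's: the same splitting off of $4\pas^9 k\,\nu$, the same reduction $(\pas^\perp)^3(\pat^\perp v)=\pas^3(\pat v^\perp)\nu$, the same treatment of the lower-order remainder via Lemma~\ref{Tpol} and Remark~\ref{prop-polinomi}, and the same key mechanism of differentiating the third-order Navier condition in time to dispose of $\pas^9 k$ at the boundary. The one real difference is how that reduction is implemented. The paper never solves for $\pas^9 k$ alone: it pairs the vector $\pat^2(-2\pas k\,\nu+\mu\tau)$, which is vertical because the third-order condition holds for all $t$, against $\pat v$, which is horizontal because the attachment condition $\gamma_2=0$ holds for all $t$; this gives the identity~\eqref{condnavierj=0}, which produces the needed \emph{product} $\pas^9 k\,\pat v^\perp$ directly, with no division. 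You instead take only the first Cartesian component of the twice-differentiated third-order condition and divide by $\nu_1$. That is legitimate: $\nu_1=-\tau_2$ is bounded away from zero exactly by the uniform non-degeneracy condition, and your identity is equivalent to the paper's via the horizontality relation $(\pat v)_2=0$. The price is that the quotient $\tau_1/\nu_1$ enters your ``polynomials'' as a bounded, non-constant coefficient, an abuse of Definition~\ref{polinomi} which, however, the paper itself commits in Lemma~\ref{Tpol}; the paper's pairing buys precisely the avoidance of this division, at the cost of invoking the attachment condition as well.

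The place where your write-up has a genuine error is the weight count for the leading piece. You claim that the $\mu$-free part of the remainder has order at most $6$, so that at the boundary $\pas^9 k=\pol_{10}^6(k)+\mu\,\pol_8^7(k)+\dots$ This is false: the \emph{tangential} component of $\pat^2(-2\pas k\,\nu+\mu\tau)$, which you cannot drop since it enters your formula through the factor $\tau_1/\nu_1$, contains the $\mu$-free monomial $8\pas k\,\pas^7 k$ coming from $-2\pas k\,(\pat^2\nu)^\top$ in~\eqref{pat2nu}; this is exactly the $\pol_{10}^7(k)$ term recorded in~\eqref{pat2bc}. Hence your expression for $\pas^9 k$ has a $\mu$-free order-$7$ part, and after multiplication by $\pat v^\perp=4\pas^6 k+\dots$ the leading class is $\pol_{17}^7(k)$, not $\pol_{17}^6(k)$. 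To be fair, the same term appears in the paper's own proof through the product of the tangential components of~\eqref{patv} and~\eqref{pat2bc}, so the classification $\pol_{17}^6$ in the statement is itself imprecise; this is harmless, because the subsequent estimate~\eqref{btfinalej=6} uses $\pol_{17}^7$ anyway, and a $\pol_{17}^7$ boundary term still satisfies the hypotheses of Lemma~\ref{stimeintegraliebordo}. So your argument, carried out honestly, proves the lemma with $\pol_{17}^7$ in place of $\pol_{17}^6$ --- which is all that is ever used --- but the order-$6$ claim as you state it, and the reasoning you give for it (``all order-seven terms carry a factor $\mu$''), do not survive the computation.
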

\begin{proof}
    Let us analogously handle the other boundary term in~\eqref{eqcruciallemmaforv}. By standard computations, we have that 
\be
(\pas^3)^\perp (\pat^\perp v) = \pas^3 (\pat v^\perp ) \nu
\ee
where
\begin{align}
\pas^3 (\pat v^\perp )=& 4 \pas^9 k+ \pol_{10}^6(k) -4 \Lambda \pas^6 k+ \pas \Lambda \pol_6^5(k)-12 \pas^2 \Lambda \pas^4 k - 4 \pas^3 \Lambda \pas^3 k 
\\&+ \Lambda^2 \pas^3 k+ 6 \Lambda \pas \Lambda \pas^2 k+ 6\Lambda \pas^2 \Lambda \pas k 
- 4 \mu \pas^7 k + \mu \pol_8^5(k) \\& +\mu  2 \Lambda \pas^4 k+ 6 \mu  \pas \Lambda \pas^3 k+ 6 \mu \pas^2 \Lambda \pas^2 k+  2 \mu\pas^3 \Lambda \pas k   + \mu^2 \pas^5 k+ \mu^2 \pol_6^3(k)\, .\label{ds3perpdtperpv}
\end{align}
As above, we aim to write the ninth-order derivative as the sum of lower-order derivatives.
\\Hence, from condition~\eqref{navierbc}, at boundary points it holds
\be\label{condnavierj=0}
\langle  \pat v , \pat^2 (-2 \pas k \nu + \mu \tau  )\rangle =0 \,,
\ee 
where
\begin{align}\label{patv}
    \pat v =& \pat V \nu + V \pat \nu + \pat \Lambda \tau + \Lambda \pat \tau
    \\=& \Big(\pat V + \Lambda \pas V \Big) \nu + (\pat \Lambda -V \pas V) \tau
    \\=& \Big(  4 \pas^6 k + \pol_7^3(k)-4 \Lambda \pas^3 k + \Lambda \pol_4^1(k) - 4 \mu \pas^4 k + \mu \pol_5^2(k) + 2\mu \Lambda \pas k + \mu^2 \pas^2 k + \mu^2 \pol_3^0(k)\Big )\nu
    \\&+ \Big (\pat \Lambda -4 \pas^2k \pas^3 k \Big) \tau
\end{align}
and
\begin{align}
\pat^2 (-2 \pas k \nu + \mu \tau)=& -2 \pat^2 \pas k \nu -4 \pat \pas k \pat\nu  -2 \pas k \pat^2 \nu + \mu \pat^2 \tau 
\\=& \Big (-2 \pat^2 \pas k-2 \pas k (\pat^2 \nu) ^\perp+ \mu (\pat^2 \tau)^\perp \Big) \nu
\\&+ \Big(4 \pas V\pat \pas k -2 \pas k (\pat^2 \nu)^\top+ \mu (\pat^2 \tau)^\top \Big) \tau \, .
\end{align}
Then, after computing $\pat^2 \pas k$, we have
\begin{align}
\pat^2 (-2 \pas k \nu + \mu \tau)=&\Big (-8 \pas^9 k+ \pol_{10}^6(k)   +\Lambda \pol_7^6(k)+ \pas \Lambda \pol_6^2(k)+ \Lambda^2 \pol_4^3(k) +(\pas \Lambda)^2 \pol_2^1(k)
\\&\phantom{\Big(}+\pat \Lambda \pol^2_3(k)  
 + \mu \pol_8^7(k) 
 + \mu \Lambda \pol_5^4(k) + \mu \pas \Lambda \pol_4^1(k) +\mu \Lambda^2\pol^1_2(k)
 \\&\phantom{\Big(}+\mu \pat \Lambda \pol_1^0(k)
 + \mu^2 \pol_6^5(k)+ \mu^2 \Lambda \pol_3^2 (k) + \mu^2 \pas \Lambda \pol_2^1(k) 
 + \mu^3 \pol_4^3(k) \Big) \nu
\\&+ \Big(\pol_{10}^7 (k) +\Lambda \pol_7^4(k)+ \pas \Lambda \pol_6^1(k)+ \Lambda^2 \pol_4^1(k)+ \pat \Lambda \pol_3^1(k)
\\&\phantom{\Big(}+\mu \pol_8^3(k) + \mu \Lambda \pol_5^3(k)+ \mu \pas \Lambda \pol_4^1(k) + \mu \Lambda^2 \pol_2^0(k)
\\&\phantom{\Big(}+\mu^2 \pol_6^3(k) + \mu^2 \Lambda\pol_3^2(k)+ \mu^3 \pol_4^1(k)
\Big) \tau \, .\label{pat2bc}
\end{align}
Replacing equations~\eqref{patv} and~\eqref{pat2bc} in the scalar product~\eqref{condnavierj=0} and recalling that at boundary points $\Lambda$ and its derivatives can be approximated by suitable polynomials (as it is shown in Lemma~\ref{Tpol}), we get 
\be
       \langle \pat v,8 \pas^9 k \nu \rangle=  \pol_{17}^6 (k)+  \pol_{15}^7(k) + \pol_{13}^7(k) + \pol_{11}^7(k)+ \pol_{9}^5(k)+ \pol_7^2(k)\, .
\ee
We now notice that
\be
\langle \pat v , \pas^9 k \nu \rangle = \langle \pat v^\perp \nu + \pat v^\top \tau, \pas^9 k \nu \rangle=\langle \pat^\perp v, \pas^9 k \nu\rangle    \, ,
\ee
hence, we have 
\begin{align}
\langle  \pat^\perp v ,(\pas^\perp )^3 (\pat^\perp v)\rangle =& \langle  \pat ^\perp v , 4\pas^9 k \nu\rangle
+ \langle  \pat^\perp v , (\pas^\perp )^3 (\pat^\perp v)-4\pas^9 k \nu\ \rangle 
\\=&\pol_{17}^6 (k)+  \pol_{15}^7(k)   + \pol_{13}^7(k) +\pol_{11}^7(k) + \pol_{9}^5(k) + \pol_7^2(k)\, . \end{align} 
\end{proof}
\begin{prop}\label{propNavierJ=6}
Let $\gamma_t$ be a maximal solution to the elastic flow of curves subjected to boundary conditions~\eqref{navierbc}, with initial datum
$\gamma_0$ in the maximal time interval $[0,T_{\max})$.
Then for all $t\in (0,T_{\max})$ it holds
\begin{equation}\label{stimaNavierj=6}
\int_{\gamma} \vert\pat ^\perp v \vert^2\de  s
\leq C ( \mathcal{E}(\gamma_0)) \, .
\end{equation}
\end{prop}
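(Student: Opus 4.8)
The plan is to differentiate the $L^2$-norm of $\pat^\perp v$ in time using the identity \eqref{eqcruciallemmaforv}, and to show that every term on its right-hand side \emph{except} the manifestly negative one $-2\int_\gamma\vert(\pas^\perp)^2(\pat^\perp v)\vert^2\de s$ can be controlled, for any $\varepsilon>0$, by $\varepsilon\Vert\pas^8 k\Vert_{L^2(\gamma)}^2$ plus a constant depending only on $\E(\gamma_0)$. The key structural fact is that in the interior the principal part of $\pat^\perp v$ is $4\pas^6 k$, so $(\pas^\perp)^2(\pat^\perp v)$ has principal part $4\pas^8 k$ and the good term controls $\Vert\pas^8 k\Vert_{L^2(\gamma)}^2$ up to lower-order polynomials in $k$. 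These lower-order pieces, together with all the error terms below, are exactly of the form $\pol_\sigma^j(k)$ with $j\le7$, to which Lemma~\ref{stimeintegraliebordo} applies.

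First I would dispose of the two boundary contributions in \eqref{eqcruciallemmaforv}. By Lemma~\ref{lemmabt1} and Lemma~\ref{lemmabt2} these are, after repeatedly lowering the order by means of the Navier boundary conditions, finite sums of boundary polynomials $\pol_\sigma^j(k)(y)$ with $j\le7$. I would then verify that each such polynomial meets the weight hypothesis of the boundary estimate \eqref{stimabtbis} (in particular $\sigma'(j)-\sum_l\alpha_l<16$) and apply it to bound every boundary term by $\varepsilon\Vert\pas^8 k\Vert_{L^2(\gamma)}^2+C(\E(\gamma_0))$.

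Next I would treat the two interior error terms. For $\int_\gamma\langle Y,\pat^\perp v\rangle\de s$ with $Y$ as in \eqref{defY}, the defining combination is parabolic, so the highest-order terms of order $\pas^{10}k$ in $\pat^\perp(\pat^\perp v)$ and in $2(\pas^\perp)^4(\pat^\perp v)$ cancel; using Lemma~\ref{evoluzionigeom2} and the $\pol$-calculus, $\langle Y,\pat^\perp v\rangle$ reduces, after integrating by parts, to a polynomial of order at most $7$, hence estimable by \eqref{stimabis}. For the metric term $\tfrac12\int_\gamma\vert\pat^\perp v\vert^2(\pas\Lambda-kV)\de s$ I would replace $\pas\Lambda$ and the endpoint values of $\Lambda$ via \eqref{pasLambda}, Lemma~\ref{stimelambda}, Lemma~\ref{tangent} and Lemma~\ref{Tpol} (so that $\Lambda$ at the boundary is $\pol_3^2(k)$), again reducing to $\int_\gamma\vert\pol_\sigma^j(k)\vert\de s$ with $j\le7$. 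Choosing $\varepsilon$ small enough to absorb all the $\varepsilon\Vert\pas^8 k\Vert_{L^2(\gamma)}^2$ contributions into the good term, and using $\int_\gamma k^2\de s\le\E(\gamma)\le\E(\gamma_0)$ together with $\ell(\gamma)\le\E(\gamma_0)/\mu$ (and, as throughout this section, the uniform non-degeneracy condition and a uniform lower bound on $\ell(\gamma)$, which are what make Lemmas~\ref{tangent},~\ref{Tpol} applicable and the constants of Lemma~\ref{stimeintegraliebordo} uniform), I arrive at $\frac{d}{dt}\tfrac12\int_\gamma\vert\pat^\perp v\vert^2\de s\le-C_1\Vert\pas^8 k\Vert_{L^2(\gamma)}^2+C_2(\E(\gamma_0))$.

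Finally, writing $y(t)=\int_\gamma\vert\pat^\perp v\vert^2\de s$ and interpolating $y\le\varepsilon\Vert\pas^8 k\Vert_{L^2(\gamma)}^2+C(\E(\gamma_0))$, the inequality above becomes a genuinely dissipative ODE $y'\le-ay+b$ with $a,b>0$ depending only on $\E(\gamma_0)$. Since the flow is instantaneously smooth, $y(t)$ is finite for every $t>0$, and a standard Gronwall-type comparison for this dissipative ODE then yields the uniform bound $y(t)\le C(\E(\gamma_0))$ on $(0,T_{\max})$; the negative term $-ay$ is precisely what tames the possible blow-up of $y$ as $t\to0^+$ coming from the merely $C^{4+\alpha}$ initial datum, for which $\int_\gamma\vert\pas^6 k_0\vert^2\de s$ need not be finite. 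I expect the main obstacle to be the second and third steps: organizing the long explicit expressions for the boundary and $Y$ terms into polynomials $\pol_\sigma^j(k)$ of order $j\le7$ and checking, term by term, that they satisfy the precise weight conditions required by Lemma~\ref{stimeintegraliebordo}.
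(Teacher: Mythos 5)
Your plan reproduces the paper's own proof in all of its essentials: the same starting identity \eqref{eqcruciallemmaforv}; the same disposal of the boundary terms through Lemma~\ref{lemmabt1}, Lemma~\ref{lemmabt2} and the boundary estimate \eqref{stimabtbis}; the same cancellation of the $\pas^{10}k$ terms in $Y$ (in the paper, $-8\pas^{10}k$ from \eqref{dt2vperp} against $2\cdot 4\pas^{10}k$ from \eqref{ds4dtvperp}); the same treatment of the metric term via \eqref{pasLambda} and Lemma~\ref{stimelambda}; and the same absorption of all resulting $\pol_\sigma^j(k)$ terms, $j\le 7$, by Lemma~\ref{stimeintegraliebordo}, leading to $\frac{d}{dt}\tfrac12\int_\gamma\vert\pat^\perp v\vert^2\de s\le -C_1\Vert\pas^8 k\Vert_{L^2(\gamma)}^2+C_2(\E(\gamma_0))$. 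You are also right, and it is worth saying explicitly, that the uniform non-degeneracy condition and the length lower bound are tacitly required here: the paper's statement of the proposition omits them, but its proof invokes Lemma~\ref{tangent}, Lemma~\ref{Tpol} and Lemma~\ref{stimelambda}, which need them.

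The only genuine divergence is the endgame, and there your refinement is partly an improvement and partly incorrect. The paper simply drops the negative term and ``integrates'', which literally gives $y(t)\le y(\varepsilon)+C\,(t-\varepsilon)$ for $y(t)=\int_\gamma\vert\pat^\perp v\vert^2\de s$: a bound growing linearly in time and anchored at a positive time $\varepsilon$ (sufficient for the long-time theorem, where $T_{\max}<\infty$ is the contradiction hypothesis, but not the literal statement \eqref{stimaNavierj=6}). Your upgrade to the dissipative inequality $y'\le -ay+b$ via the interpolation $y\le\varepsilon\Vert\pas^8k\Vert_{L^2(\gamma)}^2+C(\E(\gamma_0))$ is legitimate and buys a bound that stays uniform on unbounded time intervals, since comparison yields $y(t)\le\max\{y(t_0),b/a\}$ for $t\ge t_0$. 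However, your claim that the term $-ay$ ``tames the possible blow-up of $y$ as $t\to0^+$'' is false: the comparison argument still needs an anchor value at some $t_0$, and finiteness of $y$ at every positive time together with $y'\le-ay+b$ does not exclude $y(t)\to\infty$ as $t\to0^+$. Indeed $y(t)=b/a+t^{-1}e^{-at}$ satisfies $y'\le -ay+b$ on $(0,\infty)$, is finite for every $t>0$, and is unbounded near $t=0$, so no constant depending only on $\E(\gamma_0)$ dominates it on all of $(0,T_{\max})$. With a merely $C^{4+\alpha}$ initial datum, what your argument actually proves is the estimate on $[\varepsilon,T_{\max})$ with constant $\max\{y(\varepsilon),b/a\}$ --- a defect in the literal statement that you share with, and indeed inherit from, the paper's own ``by integrating'' step.
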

\begin{proof}
The thesis follows once we estimate the quantities in~\eqref{eqcruciallemmaforv}.
From equation~\eqref{ds2perpdtperpv}, we have
\begin{align}
- 2\int_\gamma \vert (\pas^\perp )^2 (\pat^\perp v) \vert ^2 \de  s = - 2\int_\gamma \vert &4 \pas^8 k + \pol_{9}^5(k)+ \Lambda \pol_{6}^5 (k) + \pas \Lambda \pol_{5}^4 (k)
\\&+ \pas^2 \Lambda \pol_{4}^3 (k)
 + \Lambda^2 \pol_{3}^2 (k)+ \Lambda \pas \Lambda \pol_{2}^1 (k)
\\&+ \mu \pol_{7}^6 (k) 
+ \mu \Lambda \pol_{4}^3 (k)
+\mu  \pas \Lambda \pol_{3}^2 (k)
+ \mu^2 \pol_{5}^4 (k) \vert ^2 \de  s\, .
\end{align}
Hence, using the simple inequalities
\begin{align}
\vert a+b \vert^2 &\leq C \big ( \vert a \vert ^2 + \vert b \vert ^2 \big)\, ,
\\\vert a+b \vert^2 & \geq (1-\varepsilon) \vert a \vert ^2 - C(\varepsilon) \vert b \vert ^2
\end{align}
with $\varepsilon=\dfrac{1}{2}$, we get
\begin{align}
- 2\int_\gamma \vert (\pas^\perp )^2 (\pat^\perp v) \vert ^2 \de  s  \leq&  - \int_\gamma \vert  4 \pas^8 k \vert ^2 + C  \int_\gamma   \vert \pol_{18}^5(k)+\Lambda^2 \pol_{12}^5 (k) + (\pas \Lambda )^2 \pol_{10}^4 (k)
\\ &\phantom{ - \int_\gamma \vert  4 \pas^8 k \vert ^2 + C  \int_\gamma }  + (\pas^2 \Lambda)^2 \pol_{8}^3 (k) + \Lambda^4 \pol_6^2(k)
+ \mu^2 \pol_{14}^6 (k) 
\\ &\phantom{ - \int_\gamma \vert  4 \pas^8 k \vert ^2 + C  \int_\gamma } +\mu^2  \Lambda^2 \pol_{8}^3 (k) + \mu^2 (\pas \Lambda )^2 \pol_6^2 (k) + \mu^4 \pol_{10}^4(k) \de  s 
\\ \leq&  - 16\int_\gamma \vert  \pas^8 k \vert ^2 +  \int_\gamma  \vert  \pol_{18}^5(k)+\pol_{16}^4 (k) + \pol_{14}^6 (k)+ \pol_{12}^2(k)+\pol_{10}^4 (k) \vert \de  s \,,  \label{integraldsperp2}
\end{align}
where we used the very expression of $\Lambda$ in~\eqref{Lambda} and the estimates in Lemma~\ref{stimelambda}.

Moreover, with same arguments, from equation~\eqref{patv} we get 
\begin{align}
\dfrac{1}{2} \int_\gamma \vert\pat^\perp v \vert ^2 (\pas \Lambda -  kV ) \de  s = & \dfrac{1}{2} \int_\gamma \vert\pat^\perp v \vert ^2 (\pas \Lambda +2 k \pas^2 k + k^4 -\mu k^2 ) \de  s
\\=&\int_\gamma\vert \pol_{18}^6 (k)+\pol_{17}^6 (k) + \pol_{16}^6 (k)+ \pol_{15}^6 (k)+\pol_{14}^6 (k)+\pol_{13}^6 (k)
\\&\phantom{\int_\gamma\vert}+\pol_{12}^6 (k)+ \pol_{12}^2 (k)+\pol_{10}^4 (k)+\pol_{9}^2 (k)+\pol_8^2(k)\vert \de  s\, .
    \label{integraldtperpv2}
    \end{align}
We only need to compute the integral in~\eqref{eqcruciallemmaforv} involving $Y$. By straightforward computation, we have 
\begin{align}
\pat (\pat v)^\perp =& -8 \pas^{10} k -20 k^2 \pas^8 k + \pol_{11}^7(k) + \Lambda \pol_8^7(k) + \Lambda^2 \pol_5^4(k) + \pat \Lambda \pol_4^3(k) 
\\&+ 12 \mu  \pas^8 k+\mu  \pol_9^6(k)+\mu \Lambda \pol_6^5(k)+\mu \Lambda^2 \pol_3^2(k) + \mu \pat \Lambda \pol_2^1(k) + \mu^2\pol_7^6(k)
\\&+ \mu^2 \Lambda \pol_4^3(k)+ \mu^3 \pol_5^4(k) \, , \label{dt2vperp}
\end{align}
and 
\begin{align}
    \pas ^4 (\pat v) ^ \perp=& 4 \pas^{10} k + \pol_{11}^7(k) + \Lambda \pol_8^7(k)+ \pas \Lambda \pol_7^6(k) + \pas^2 \Lambda \pol_6^5(k) + \pas^3 \Lambda \pol_5^4(k)+ \pas^4 \Lambda \pol_4^3(k) 
\\&-4 \mu  \pas^8 k+\mu  \pol_9^6(k)+\mu \Lambda \pol_6^5(k)+\mu \pas \Lambda \pol_5^4(k)+\mu  \pas^2 \Lambda \pol_4^3(k) 
\\&+\mu  \pas^3 \Lambda \pol_3^2(k) + \mu \pas^4 \Lambda \pol_2^1(k) + \mu^2(\pol_7^6(k)
     \, . \label{ds4dtvperp}
\end{align}
Then, 
we get
\begin{align}
Y=&\pat^\perp  (\pat ^\perp v)+2 (\pas ^\perp )^4 (\pat ^ \perp v) 
\\= &\Big( -20 k^2 \pas^8 k + \pol_{11}^7(k) + \Lambda \pol_8^7(k) + \pas \Lambda \pol_7^6(k) + \Lambda^2 \pol_5^4(k) + \pat \Lambda \pol_4^3(k) 
\\&\phantom{\Big(}
+ 4 \mu  \pas^8 k+\mu  \pol_9^6(k)+\mu \Lambda \pol_6^5(k)
+\mu \pas \Lambda \pol_5^4(k)+\mu \Lambda^2 \pol_3^2(k) + \mu \pat \Lambda \pol_2^1(k) 
\\&\phantom{\Big(}+ \mu^2\pol_7^6(k)+ \mu^2 \Lambda \pol_4^3(k)+ \mu^3 \pol_5^4(k)\Big)  \nu \, .\label{Y}
\end{align}

Hence, computing the scalar product $\langle Y , \pat ^\perp v \rangle$, using the well-known Peter-Paul inequality and integrating by parts the integral $\int_\gamma \pas^6 k \pas^8 k \de s$, we have 
\begin{align}
    \int_\gamma \langle Y  , \pat^\perp v \rangle \de  s \leq & \frac{1}{2} \int_\gamma \vert \pas^8 k \vert^2 \de s -4 \mu \int_\gamma \vert \pas^7 k \vert ^2 \de s
    + \pol_{15}^7(k) \Big \vert_0^1
 \\& +\int_\gamma \vert \pol_{18}^7(k)+ \pol_{17}^6(k) + \pol_{18}^7(k) \pol_{16}^7(k)+\pol_{15}^6(k)+\pol_{14}^7(k)
 \\&\phantom{\int_\gamma \vert}+\pol_{13}^6(k)+ \pol_{12}^6(k)+ \pol_{11}^4(k)+ \pol_{10}^4(k)+\pol_{8}^4(k)+ \pol_{6}^2(k) \vert \de  s \, .
    \label{intY}
\end{align}
where, as above, we estimated $\Lambda$ and its derivatives by means of Lemma~\ref{stimelambda}. 

Moreover, using identities in Lemma~\ref{lemmabt1} and Lemma~\ref{lemmabt2}, we end up with the following inequality
\begin{align}
     - 2\langle  \pat^\perp v ,(\pas^\perp )^3 (\pat^\perp v)\rangle \Big \vert_0^1 + 2\langle \pas^\perp  (\pat^\perp v) , (\pas^\perp )^2 (\pat^\perp v) \rangle \Big \vert_0^1 \leq& \vert \pol_{17}^7(k)\vert  +  \vert \pol_{15}^7(k)\vert +  \vert \pol_{13}^7(k)\vert 
     \\&+  \vert \pol_{11}^7(k)\vert + \vert \pol_9^5(k)\vert  + \vert  \pol_7^2(k) \vert \, . \label{btfinalej=6}
\end{align}
Then, putting together inequalities~\eqref{integraldsperp2},~\eqref{integraldtperpv2},~\eqref{intY} and~\eqref{btfinalej=6}, we get 
\begin{align}
\dfrac{d}{dt} \dfrac{1}{2} \int_\gamma \vert \pat^\perp v \vert ^2 \de  s \leq & \int_\gamma \vert \pol_{18}^7(k)+ \pol_{17}^7(k)+\pol_{16}^7(k)+\pol_{14}^7(k)+ \pol_{15}^6(k) + \pol_{13}^6(k) +  \pol_{12}^6(k)
\\& \phantom{\int_\gamma}+ \pol_{11}^4(k)+ \pol_{10}^4(k) +\pol_8^4(k) + \pol_{9}^2(k)+  \pol_{6}^2(k) \vert \de s 
\\& +\vert \pol_{17}^7(k)\vert  + \mu \vert \pol_{15}^7(k)\vert + \mu^2 \vert \pol_{13}^7(k)\vert + \mu^3 \vert \pol_{11}^7(k)\vert + \mu^4 \vert \pol_9^5(k)\vert  + \mu^5 \vert  \pol_7^2(k) \vert \Big \vert_0^1\, .
\end{align}
By means of Lemma~\ref{stimeintegraliebordo}, we have
\begin{align}
\dfrac{d}{dt} \dfrac{1}{2} \int_\gamma \vert \pat^\perp v \vert ^2 \de  s \leq &- C \Big(\Vert \pas^8 k \Vert^2 _{L^2(\gamma)}+\mu  \Vert \pas^7 k \Vert^2 _{L^2(\gamma)} \Big) + C\Big( \Vert k \Vert_{L^2(\gamma)}^2 +  \Vert k \Vert^\Theta_{L^2(\gamma)} \Big)
\\ \leq & C(\E(\gamma_0))
\end{align}
for some exponent $\Theta>2$ and constant $C$ which depends on $\ell(\gamma)$.

Hence, by integrating, it follows
\be 
 \int_\gamma \vert \pat^\perp v \vert ^2 \de s \leq C(\E(\gamma_0)) \,.
\ee
\end{proof}
\begin{prop}\label{propfinaleNavierJ=6}
Let $\gamma_t$ be a maximal solution to the elastic flow of curves subjected to Navier boundary conditions with initial datum
$\gamma_0$, which satisfies the {\em uniform non-degeneracy condition}~\eqref{defunifnondeg} in the maximal time interval $[0,T_{\max})$.
Then, for all $t\in (0,T_{\max})$ it holds
\begin{equation}\label{stimaNavierj=6fin}
\int_{\gamma} \vert\pas^6 k\vert^2\de  s
\leq C ( \mathcal{E}(\gamma_0)) \, .
\end{equation}
\end{prop}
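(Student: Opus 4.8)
The plan is to invert the algebraic relation between $\pat^\perp v$ and $\pas^6 k$ and then invoke the bound already obtained in Proposition~\ref{propNavierJ=6}. Recall from~\eqref{dt2gammaesteso} that the normal component of the acceleration reads
$$
\langle \pat^2\gamma,\nu\rangle = 4\pas^6 k + R,
$$
where $R$ collects all the remaining terms, namely the curvature polynomials of order at most $4$ (the summands $10 k^2\pas^4 k$, $-4\mu\pas^4 k$, $\mu^2\pas^2 k$, $\pol_7^3(k)$, $\mu\pol_5^2(k)$, $\mu^2\pol_3^0(k)$) together with the contributions containing the tangential velocity ($-4\Lambda\pas^3 k$, $-6\Lambda k^2\pas k$, $\Lambda^2 k$, $2\mu\Lambda\pas k$). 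Since by~\eqref{velocity} one has $\pat^\perp v = \langle\pat^2\gamma,\nu\rangle\,\nu$, and hence $|\pat^\perp v| = |\langle\pat^2\gamma,\nu\rangle|$, solving for $\pas^6 k$ and using $|a+b|^2\le 2|a|^2+2|b|^2$ gives
$$
16\int_\gamma |\pas^6 k|^2\de s \le 2\int_\gamma |\pat^\perp v|^2\de s + 2\int_\gamma |R|^2\de s .
$$
By Proposition~\ref{propNavierJ=6} the first integral on the right is bounded by $C(\E(\gamma_0))$, so everything reduces to absorbing $\int_\gamma |R|^2\de s$.

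First I would dispose of the tangential velocity. By Lemma~\ref{tangent} and the uniform non-degeneracy assumption $\Lambda$ is proportional to $\pas^2 k$ at the boundary, while in the interior $\Lambda$ is the interpolation~\eqref{Lambda}; hence by Lemma~\ref{stimelambda} both $\|\Lambda\|_\infty$ and $\|\pas\Lambda\|_\infty$ are controlled by $|\pas^2 k(0)|+|\pas^2 k(1)|$, and $\pas^j\Lambda\equiv 0$ for $j\ge 2$. Consequently, after replacing each factor $\Lambda$ by the corresponding boundary value of $\pas^2 k$, every monomial of $R$ becomes a curvature polynomial of type $\pol_\sigma^h(k)$ with $h\le 4$, possibly multiplied by a boundary evaluation of $\pas^2 k$; in particular all these terms have differential order strictly below $6$.

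The key step is then interpolation and absorption. Squaring, every resulting summand is an integrand of the form $|\pol_\sigma^h(k)|$ with $h\le 4$, so I would apply the Gagliardo--Nirenberg interpolation inequalities (Lemma~4.6 and Lemma~4.7 of~\cite{ManPluPozSurvey}, i.e.\ the analogue of Lemma~\ref{stimeintegraliebordo} with top order $\pas^6 k$ in place of $\pas^8 k$) to obtain, for every $\varepsilon>0$,
$$
\int_\gamma |\pol_\sigma^h(k)|\de s \le \varepsilon\,\|\pas^6 k\|_{L^2}^2 + C(\varepsilon,\ell(\gamma))\big(\|k\|_{L^2}^2+\|k\|_{L^2}^{\Theta}\big),
$$
for some $\Theta>2$; the homogeneity bookkeeping guaranteeing that the top-order factor appears with an exponent strictly below $2$ (so that Young's inequality produces the absorbable $\varepsilon$-term) is precisely the scaling condition verified in Lemma~\ref{stimeintegraliebordo}. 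Choosing $\varepsilon$ small, the terms $\varepsilon\|\pas^6 k\|_{L^2}^2$ are absorbed into the left-hand side. Finally, the energy monotonicity of Proposition~\ref{energydecreases} bounds $\|k\|_{L^2}^2\le\E(\gamma_t)\le\E(\gamma_0)$ and, through $\mu\,\ell(\gamma)\le\E(\gamma_0)$, keeps the remaining constants under control in terms of $\E(\gamma_0)$, yielding $\int_\gamma|\pas^6 k|^2\de s\le C(\E(\gamma_0))$.

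I expect the main obstacle to be the reduction of the $\Lambda$-terms together with the careful verification that every squared summand meets the homogeneity condition needed for the Gagliardo--Nirenberg step: because the boundary evaluations of $\pas^2 k$ arising from $\Lambda$ force one to interpolate simultaneously an interior $L^2$-norm and a boundary trace, one must check that the combined scaling weight never reaches the critical value that would obstruct absorption into $\varepsilon\|\pas^6 k\|_{L^2}^2$.
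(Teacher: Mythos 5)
Your proposal is correct and follows essentially the same route as the paper: invert the identity~\eqref{dt2gammaesteso} to write $\pas^6 k$ as $\pat^\perp v$ plus lower-order terms, bound $\int_\gamma\vert\pat^\perp v\vert^2\de s$ by Proposition~\ref{propNavierJ=6}, estimate the remaining curvature polynomials by H\"older, Gagliardo--Nirenberg interpolation (with top order $\pas^6 k$) and Young's inequality so that the $\varepsilon\Vert\pas^6 k\Vert_{L^2}^2$ terms can be absorbed, and conclude via the energy monotonicity of Proposition~\ref{energydecreases}. The only cosmetic difference is your treatment of the tangential terms through the $L^\infty$-bounds of Lemma~\ref{stimelambda}, where the paper instead substitutes $\Lambda$ by the polynomial expressions of Lemma~\ref{Tpol}; the absorption scheme and homogeneity bookkeeping are identical.
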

\begin{proof} From formula~\eqref{dt2gammaesteso} and Lemma~\ref{Tpol}, it follows
  \be 
  \pat^\perp v = \pat v ^\perp \nu = \Big(\pas^6k + \pol_7^4(k)+ \mu \pol_5^4 (k) + \mu^2 \pol_3^2(k) \Big) \nu \, .
  \ee
  However, since we are assuming that $\mu$ is constant, we simply have
    \be \label{vlot}
 \pas^6k =\pat v ^\perp + \pol_7^4(k)+ \pol_5^4 (k) +  \pol_3^2(k) 
  \ee
  and by means of Peter-Paul inequality, we get
\be\label{pas6kpol}
  \int_\gamma \vert \pas^6 k \vert^2 \de s \leq \int_\gamma \vert  \pat v ^\perp \vert^2 \de s + C \Bigg(\int_\gamma \vert \pol_7^4(k)\vert^2 \de s+\int_\gamma \vert \pol_5^4(k)\vert^2 \de s+\int_\gamma \vert \pol_3^2(k)\vert^2 \de s\Bigg) \, .
  \ee
We now estimate separately the integrals involving the polynomials. 
\\We start considering
\be
\int_\gamma \vert \pol_7^4(k)\vert^2 \de s= \int_\gamma \Big \vert \prod_{l=0}^4 (\pas^l k )^{\alpha_l}\Big \vert^2 \de s
\ee
where $\alpha_l \in \N$ and $\sum_{l=0}^4 (l+1)\alpha_l = 7$. 
So, by H\"older inequality, we get
\be
\int_\gamma \vert \pol_7^4(k)\vert^2 \de s= \int_\gamma \Big \vert \prod_{l=0}^4 (\pas^l k )^{\alpha_l}\Big \vert^2 \de s \leq \prod_{l=0}^4 \Bigg(\int_\gamma \vert  \pas^l k  \vert^{2\alpha_l \beta_l} \de s \Bigg)^{\frac{1}{\beta_l}}=\prod_{l=0}^4  \Vert \pas^l k \Vert_{L^{2\alpha_l \beta_l}(\gamma)}^{2 \alpha_l}
\ee
where
$\beta_l:= \frac{7}{(l+1)\alpha_l}>1$ if $\alpha_l\ne 0$ (if $\alpha_l=0$ we simply have the integral of a unitary function), which clearly satisfies 
$$\sum_{l=0}^4 \frac{1}{\beta_l} = 1\, . $$
Then, we estimate any of such products by the well-known interpolation inequalities (see~\cite{mant5}, for instance),
\be\label{interppaslk}
\Vert \pas^l k \Vert_{L^{2\alpha_l \beta_l}(\gamma)} \leq C \Vert \pas^6 k \Vert_{L^2(\gamma)}^{\sigma_l} \Vert  k \Vert_{L^2(\gamma)}^{1-\sigma_l}+\Vert  k \Vert_{L^2(\gamma)}
\ee
for some constant $C$ depending on $\alpha_l, \beta_l$ and coefficient $\sigma_l$ given by
\be
\sigma_l= \frac{1}{6} \Big ( l - \frac{1}{2 \alpha_l \beta_l}+ \frac{1}{2}\Big) \in \Big[\frac{l}{6}, 1\Big)\, .
\ee
Moreover, we notice that 
\begin{align}
\sum_{l=0}^4 2 \alpha_l \sigma_l = & \sum_{l=0}^4\frac{1}{3} \Big ( \alpha_l (l+1) - \frac{1}{2 \beta_l}- \frac{\alpha_l }{2}\Big)
\\&= \frac{7}{3}-\frac{1}{6}-\frac{\sum_{l=0}^4 \alpha_l}{6} <2 \, ,
\end{align}
where in the last inequality we use the fact that, since $l$, $\alpha_l $ are respectively the order of derivations and the exponents of the derivative in $\pol_7^4(k)$, it follows 
\be
1<\sum_{l=0}^4 \alpha_l \leq 7 \, .
\ee
Then, multiplying together inequalities~\eqref{interppaslk} and applying the Young inequality, we have
\begin{align}
\int_\gamma \vert \pol_7^4(k)\vert^2 \de s \leq & \left ( \Vert \pas^6 k \Vert_{L^2\gamma)}+\Vert  k \Vert_{L^2(\gamma)}\right)^{\sum_{l=0}^4 2 \alpha_l \sigma_l}  \Vert k \Vert_{L^2(\gamma)}^{\sum_{l=0}^4 2 \alpha_l (1-\sigma_l)}
\\\leq & \e \left(\Vert \pas^6 k \Vert_{L^2(\gamma)} +\Vert k \Vert_{L^2(\gamma)}\right)^2+ C(\e) \Vert k \Vert_{L^2(\gamma)}^{\Theta_1}\label{p47}
\end{align}
for some exponent $\Theta_1>2$.

Arguing in the same way, one can check that
\be\label{p45}
\int_\gamma \vert \pol_5^4(k)\vert^2 \de s \leq \e \left (\Vert \pas^6 k \Vert_{L^2(\gamma)} + \Vert k \Vert_{L^2(\gamma)}\right) ^2+ C(\e) \Vert k \Vert_{L^2(\gamma)}^{\Theta_2}
\ee
and 
\be\label{p43}
\int_\gamma \vert \pol_3^2(k)\vert^2 \de s \leq \e\left( \Vert \pas^6 k \Vert_{L^2(\gamma)}+  \Vert k \Vert_{L^2(\gamma)} \right)^2+ C(\e) \Vert k \Vert_{L^2(\gamma)}^{\Theta_3}
\ee
for some exponents $\Theta_2,\Theta_3>2$.
\\Replacing the estimates~\eqref{p47},~\eqref{p45} and~\eqref{p43} in~\eqref{pas6kpol} and moving the small part of $\Vert \pas^6 k \Vert^2_{L^2(\gamma)} $ on the right-hand side, we have
\be
  \int_\gamma \vert \pas^6 k \vert^2 \de s \leq \int_\gamma \vert  \pat v ^\perp \vert^2 \de s + C (\Vert  k \Vert^2_{L^2(\gamma)} +\Vert  k \Vert^\Theta_{L^2(\gamma)})  \, ,
  \ee
  where, as above, $\theta>2$.  
  Then, we conclude using Proposition~\ref{propNavierJ=6} and the energy monotonicity in Proposition~\ref{energydecreases}.
\end{proof}
\section{Long-time existence}
In the following, we adapt the proof of~\cite[Theorem~4.15]{ManPluPozSurvey} to our situation.
\begin{teo}
 Let $\gamma_0$ be a geometrically admissible initial curve. Suppose that $\gamma_t$ is a maximal solution to the elastic flow with initial datum $\gamma_0$ in the maximal time interval $[0,T_{\max})$ with $T_{\max} \in (0, \infty) \cup \{\infty\}$. Then, up to reparametrization and translation of $\gamma_t$, it follows
$$T_{\max}= \infty$$
or at least one of the following holds
\begin{itemize}
\item $\liminf \ell(\gamma_t )\to 0$ as $ t \to T_{\max}$;
\item $\liminf \tau_2 \to 0$ as $ t \to T_{\max}$ at boundary points.
\end{itemize}
\end{teo}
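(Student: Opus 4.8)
The plan is to argue by contradiction. Suppose $T_{\max}<\infty$ and that \emph{both} alternatives in the statement fail; that is, after performing the reparametrization (and translation) we are free to choose, assume there is $\rho>0$ with $\tau_2(y)\ge\rho$ for all $t\in[0,T_{\max})$ and $y\in\{0,1\}$, i.e.\ the \emph{uniform non-degeneracy condition} of Definition~\ref{defunifnondeg} holds, and assume $\liminf_{t\to T_{\max}}\ell(\gamma_t)>0$. Since the energy decreases by Proposition~\ref{energydecreases} and $\E(\gamma_t)\ge\mu\,\ell(\gamma_t)$, the length is also bounded above by $\E(\gamma_0)/\mu$; hence $\ell(\gamma_t)$ is uniformly bounded away from $0$ and $\infty$ on $[0,T_{\max})$. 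By the instantaneous smoothing recalled at the start of the section, we may assume the flow is smooth on $(0,T_{\max})$, so all the geometric estimates of the previous section apply.

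The first step is to collect \emph{uniform curvature bounds}. Integrating the differential inequality of Proposition~\ref{propNavierj=2} over the finite interval $[0,t]$ gives $\sup_{t<T_{\max}}\|\pas^2 k\|_{L^2(\gamma_t)}\le C(\E(\gamma_0),T_{\max})$, while Proposition~\ref{propfinaleNavierJ=6} directly yields $\sup_{t<T_{\max}}\|\pas^6 k\|_{L^2(\gamma_t)}\le C(\E(\gamma_0))$. Combining these with the energy bound $\|k\|_{L^2(\gamma_t)}^2\le\E(\gamma_0)$ and the Gagliardo--Nirenberg interpolation inequalities already exploited in Lemma~\ref{stimeintegraliebordo}, I obtain uniform bounds on $\|\pas^j k\|_{L^2(\gamma_t)}$ for every $0\le j\le 6$, hence, by Sobolev embedding on a curve of controlled length, on $\|\pas^j k\|_{L^\infty(\gamma_t)}$ for $j\le 5$ and on $\pas^2 k$ in $C^{\alpha}(\gamma_t)$, uniformly in $t\in[0,T_{\max})$. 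This is precisely the minimal amount of derivative control needed below.

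The second, and most delicate, step is to upgrade these \emph{geometric} bounds to uniform bounds on the \emph{parametrization}, exploiting the specific tangential velocity~\eqref{Lambda}. By Lemma~\ref{tangent} the boundary values $\Lambda(t,y)=2\,\pas^2 k(t,y)\,\nu_2(t,y)/\tau_2(t,y)$ are controlled by the curvature bounds together with $\tau_2\ge\rho$; Lemma~\ref{stimelambda} then bounds $\Lambda$, $\pas\Lambda$ and $\pat\Lambda$ throughout $[0,1]$. Feeding these and the curvature bounds into the evolution~\eqref{metricaevol} of the length element, $\pat(\de s)=(\pas\Lambda-kV)\de s$, a Gr\"onwall argument over the finite interval $[0,T_{\max})$ shows that $|\pax\gamma|$ stays bounded above and below by positive constants, so the parametrization remains uniformly regular. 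With the metric thus controlled, the uniform $L^\infty$ and $C^\alpha$ bounds on $k$ and its arclength derivatives translate into uniform (in $t$) bounds on $\gamma(t,\cdot)$ in $C^{4+\alpha}([0,1],\R^2)$, while the motion equation $\pat\gamma=V\nu+\Lambda\tau$ of~\eqref{velocity} provides the corresponding H\"older control in time. Consequently $\gamma(t,\cdot)$ is Cauchy in $C^{4+\alpha}([0,1],\R^2)$ as $t\to T_{\max}$ and converges to a limit curve $\gamma_{T_{\max}}$ which is regular, still satisfies the Navier boundary conditions and the non-degeneracy $\tau_2(y)\ge\rho$, hence is a geometrically admissible initial curve in the sense of Definition~\ref{admissinitialgeom}.

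Finally, I would restart the flow: applying the short-time existence result Theorem~\ref{geomexistence} with initial datum $\gamma_{T_{\max}}$ produces a solution on $[T_{\max},T_{\max}+\delta)$ for some $\delta>0$, which glues with $\gamma_t$ to yield a solution on $[0,T_{\max}+\delta)$, contradicting the maximality of $T_{\max}$. Therefore $T_{\max}=\infty$ unless one of the two degeneracies occurs. The main obstacle is the second step: passing from purely geometric curvature estimates to uniform control of the parametrization up to and including the boundary, where the interplay between the attachment constraint, the lower bound $\tau_2\ge\rho$, and the tailored tangential velocity~\eqref{Lambda} is essential. Without the uniform lower bound on $\tau_2$ the boundary value of $\Lambda$ could blow up and the Gr\"onwall control of the metric would fail, which is exactly why the second alternative in the statement cannot be dropped.
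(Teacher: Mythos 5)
Your overall strategy is the same as the paper's: argue by contradiction, combine Propositions~\ref{propNavierj=2} and~\ref{propfinaleNavierJ=6} with Gagliardo--Nirenberg interpolation to get uniform bounds on $\pas^j k$, extract a limit curve at $T_{\max}$, and restart the flow by short-time existence to contradict maximality. Where you genuinely diverge is the passage from \emph{geometric} curvature bounds to bounds on a \emph{parametrization}: the paper switches at this point to the constant-speed parametrization $\vert\pax\widetilde\gamma\vert=\ell(\widetilde\gamma)$, for which $\pas^j\boldsymbol{\kappa}=\pax^{j+2}\widetilde\gamma/\ell(\widetilde\gamma)^{j+2}$, so the curvature bounds become bounds on all space derivatives of $\widetilde\gamma$ with no further work, and Ascoli--Arzel\`a produces the limit curve; you instead keep the flow's own parametrization with the tangential velocity~\eqref{Lambda}, bound $\Lambda$, $\pas\Lambda$, $\pat\Lambda$ via Lemmas~\ref{tangent} and~\ref{stimelambda}, and control the metric through a Gr\"onwall argument on~\eqref{metricaevol}.

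That middle step, as written, has a gap. Gr\"onwall applied to $\pat\vert\pax\gamma\vert=(\pas\Lambda-kV)\vert\pax\gamma\vert$ controls $\vert\pax\gamma\vert$ pointwise, i.e.\ at order zero only, but your assertion that ``with the metric thus controlled'' the $L^\infty$/$C^\alpha$ bounds on $k,\dots,\pas^5k$ translate into uniform $C^{4+\alpha}$ bounds on $\gamma(t,\cdot)$ is not automatic: already $\pax^2\gamma=\bigl(\pax\vert\pax\gamma\vert\bigr)\tau+\vert\pax\gamma\vert^2k\,\nu$, so even $C^2$ control of $\gamma$ requires a bound on $\pax\vert\pax\gamma\vert$, and $C^{4+\alpha}$ control requires bounds on $\pax^j\vert\pax\gamma\vert$ up to $j=3$ plus H\"older, none of which follow from the zeroth-order estimate. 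The gap is fixable inside your scheme: differentiate~\eqref{metricaevol} in $x$, use that $\pas^2\Lambda=0$ by~\eqref{pasLambda} so each differentiation produces only factors of $\vert\pax\gamma\vert$ and arclength derivatives of $kV$ (all bounded by your curvature estimates), and apply Gr\"onwall iteratively; alternatively, simply switch to the constant-speed parametrization at this stage, which is exactly the device the paper uses to bypass the issue. Two smaller glosses: ``Cauchy in $C^{4+\alpha}$'' is slightly too strong --- uniform bounds in $C^{4+\alpha}$ together with time-Lipschitz control in $C^0$ give convergence in $C^{4+\alpha'}$ for $\alpha'<\alpha$ with a limit still of class $C^{4+\alpha}$, which suffices; and the final ``glues'' needs the matching of one-sided time derivatives at $T_{\max}$ (the paper checks $\lim_{t\nearrow T_{\max}}\pat\widetilde\gamma=\lim_{t\searrow T_{\max}}\pat\hat\gamma$ after reparametrizing the restarted flow), since contradicting Definition~\ref{maxsol} requires the concatenated map to be an admissible solution, in particular regular in time, across $T_{\max}$.
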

\begin{proof}
Suppose by contradiction that the two assertions in the statement are not fulfilled and that $T_{\max}$ is finite. So, in the whole time interval $[0,T_{\max})$
the length of the curves $\gamma_t$ is uniformly bounded from below away from zero and the uniform condition~\eqref{unifconditiontau} is satisfied. 
Moreover, since the energy~\eqref{energy} decreases in time, both the $L^2$-norm of the curvature and the length of $\gamma$ are uniformly bounded from above. Let $\e >0$ be fixed, by means of Proposition~\ref{propNavierj=2} and Proposition~\ref{propNavierJ=6} we have that 
\be
\pas^2 k \in L^\infty ([0,T_{\max}); L^2)
\qquad \text{and} \qquad \pas^6 k \in L^\infty ((\e,T_{\max}); L^2)\, .
\ee
Hence, using Gagliardo-Nirenberg 
inequality for all $t\in[0,T_{\max})$
we get
\be
 \Vert \pas^{j}k\Vert_{L^2(\gamma)}
\leq C_1\Vert \pas^{6}k\Vert_{L^2(\gamma)}^\sigma
    \Vert k\Vert_{L^2(\gamma)}^{1-\sigma}+
    C_2 \Vert k\Vert_{L^2(\gamma)}\leq C(\E(\gamma_0))\,,
\ee
for every integer $j \leq 6$, with constants independent on $t$ and for suitable exponent $\sigma$. Actually, by interpolation, we have 
\be
\pas^j k \in L^\infty ((\e,T_{\max}); L^\infty)
\ee
for every integer $j \leq 5$.
Reparametrizing the curve $\gamma_t$ into $\widetilde{\gamma}_t$
with the property $\vert \partial_x\widetilde{\gamma}(x)\vert =\ell(\widetilde{\gamma})$ for every $x\in [0,1]$
and for all $t\in [0,T_{\max})$ and translating so that it remains in a ball $B_R(0)$ for every time (since its length is uniformly bounded from above), we get
\begin{itemize}
    \item $ 0<c\leq \sup_{t\in [0,T_{\max}), x\in [0,1]} \vert
    \partial_x\widetilde{\gamma}(t,x)\vert \leq C<\infty$,
\item
$0<c\leq \sup_{t\in [0,T_{\max}), x\in [0,1]} \vert\widetilde{\gamma}(t,x)\vert \leq C<\infty \, .$\end{itemize}
Hence, $\tau \in L^\infty ([0,T_{\max}); L^\infty)$ and $\pax ^j \widetilde \gamma \in L^{\infty}((\varepsilon,T_{\max});L^\infty)$ for every integer $j \leq 7$. Then, from the observation above and the fact that $\k = k \nu$, we get $\pas^j \k
\in L^{\infty}((\varepsilon,T_{\max});L^\infty)$ for every integer $j \leq 5$ and 
$\pas^6 \k
\in L^{\infty}((\varepsilon,T_{\max});L^2)$. 
Moreover, thanks to our choice of parametrization, we have  
\be
\k (x)= \frac{\pax^2 \widetilde \gamma(x) }{\ell (\widetilde \gamma)^2} \qquad \text{and} \qquad \pas^j \k (x)=\frac{\pax^{j+2} \widetilde \gamma(x) }{\ell (\widetilde \gamma)^{j+2}} \, .
\ee
So, it follows that $\pax^j \widetilde \gamma \in L^{\infty}((\varepsilon,T_{\max});L^\infty)$
for every integer $1 \leq j \leq 7$ and $\pax^8 \widetilde \gamma \in L^{\infty}((\varepsilon,T_{\max});L^2)$.
\\Then, by Ascoli-Arzel\`a Theorem, there exists a curve $\gamma_{\max}$ such that
\be
\lim_{t\nearrow T_{\max}} \pax^j \widetilde{\gamma}(x)=\pax ^j \gamma_{\max}(x)
\ee
for every integer $j \leq 6$. The curve $\gamma_{\max}$ is an admissible initial curve, since by continuity of $k$ and $\pas^2 k$ it fulfills the system~\eqref{navierbc} and uniform condition~\eqref{unifconditiontau} at boundary points.
Then, there exists an elastic flow $\overline  \gamma_t \in C^{\frac{4+\alpha}{4}, 4+ \alpha}\left([T_{\max}, T_{\max} + \delta) \times [0,1]; \R^2\right)$ with $\delta>0$. We again reparametrize $\overline \gamma_t$ in $\hat \gamma_t$ with constant speed equal to length and we have 
\be
\lim_{t\searrow T_{\max}} \pax^j \hat{\gamma}(x)=\pax ^j \gamma_{\max}(x)
\ee
for every integer $j \leq 6$.

Then, 
$$
\lim_{t\nearrow T_{\max}} \partial_t\widetilde{\gamma}(t,x)
=\lim_{t\searrow T_{\max}}\partial_t\hat{\gamma}(t,x)\,.
$$
Thus, we found a solution to the elastic flow in $C^{\frac{4+\alpha}{4}, 4+ \alpha}\left([0, T_{\max} + \delta) \times [0,1]; \R^2\right)$. This obviously contradicts the maximality of $T_{\max}$.
\end{proof}
We conclude by emphasizing that, even if those arguments and techniques have been already used in literature, all the previous works deal with closed curves (see for instance~\cite{kuschatdz,ManPoz,Poz22})
or open curves with fixed boundary points (see for instance~\cite{NoOk14,NoOk17,DaPoSp16,Ch12,Sp17}). So, all the complications that appear in this paper are due to the fact that we have partial conditions on the boundary points.

\section{Appendix}
For the sake of completeness, we show the smoothness of critical points of functional $\E$. 
\begin{lemma}[{\cite[Corollary~6.13, Exercise~6.7]{AltFunctionalAnalysis2016}}]\label{LemmaAltDuality}
 Suppose that $\Omega\subset \R^n$ is open, $f\in L_{loc}^1(\Omega)$, $p\in (1,\infty]$, $1/p + 1/p'=1$, $m\in \N_0$, and that there exists a constant $C_0$ such that for all $k\in \N_0$ with $k\leq m$ and all $\zeta \in C_c^\infty(\Omega)$
 \begin{align*}
 \left \vert \int_\Omega f \partial^k \zeta \de x \right \vert \leq C_0 \| \zeta \|_{L^{p'}(\Omega)}\,.
 \end{align*}
Then$f\in W^{m,p}(\Omega)$ and there exists a constant $C=C(m,C_0)$ with $\| f \|_{W^{m,p}}\leq C$.
\end{lemma}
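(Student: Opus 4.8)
The plan is to prove this purely by duality, realizing each weak derivative of $f$ as the Riesz representative of a bounded linear functional. The hypothesis is exactly the statement that, for each order $k\le m$, the map
\[
T_k\colon\ \zeta\longmapsto\int_\Omega f\,\partial^k\zeta\de x,\qquad \zeta\in C_c^\infty(\Omega),
\]
is a linear functional on $C_c^\infty(\Omega)$ bounded with respect to the $L^{p'}(\Omega)$-norm, with operator norm at most $C_0$. (Here $\partial^k$ denotes a derivative of order $k$; when $n>1$ one simply runs the argument over each multi-index of order $\le m$, affecting only the final combinatorial constant.)

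First I would extend and represent $T_k$. Since $p\in(1,\infty]$ we have $p'\in[1,\infty)$, so $C_c^\infty(\Omega)$ is dense in $L^{p'}(\Omega)$ and $T_k$ extends uniquely to a bounded functional on all of $L^{p'}(\Omega)$ of the same norm. As $\Omega\subset\R^n$ is $\sigma$-finite, the dual of $L^{p'}(\Omega)$ is isometrically $L^p(\Omega)$ for every $p'\in[1,\infty)$, the $\sigma$-finiteness being needed only at the endpoint $p'=1$ (that is $p=\infty$). Hence there exists $g_k\in L^p(\Omega)$ with $\Vert g_k\Vert_{L^p}\le C_0$ such that $T_k(\zeta)=\int_\Omega g_k\,\zeta\de x$ for all $\zeta\in L^{p'}(\Omega)$.

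Next I would identify $g_k$ with a weak derivative of $f$. Restricting the representation to $\zeta\in C_c^\infty(\Omega)$ gives
\[
\int_\Omega f\,\partial^k\zeta\de x=\int_\Omega g_k\,\zeta\de x\qquad\text{for all }\zeta\in C_c^\infty(\Omega),
\]
which is precisely the statement that the weak derivative $\partial^k f$ exists and equals $(-1)^k g_k\in L^p(\Omega)$. Performing this for every $k\le m$ shows that all weak derivatives of $f$ up to order $m$ lie in $L^p(\Omega)$, so $f\in W^{m,p}(\Omega)$; combining the bounds $\Vert\partial^k f\Vert_{L^p}=\Vert g_k\Vert_{L^p}\le C_0$ over the finitely many derivatives of order $\le m$ yields $\Vert f\Vert_{W^{m,p}}\le C(m,C_0)$.

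The only genuinely delicate ingredients are those in the second step: the density of $C_c^\infty(\Omega)$ in $L^{p'}(\Omega)$, which is where the restriction $p>1$ (equivalently $p'<\infty$) is used, and the identification $(L^{p'})^*\cong L^p$, whose only nontrivial case is the endpoint $p'=1$ covered by $\sigma$-finiteness. The remaining steps are the routine passage from the dual-norm bound to the $L^p$-bound on the representative and, for $n>1$, the bookkeeping over multi-indices.
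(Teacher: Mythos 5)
The paper itself gives no proof of this lemma: it is stated as a quotation from Alt's textbook (Corollary~6.13, Exercise~6.7), so there is no internal argument to compare against. Your duality proof is correct and is the standard route to this characterization of $W^{m,p}$ — indeed it is the natural reading of the cited result: boundedness of $T_k$ with respect to the $L^{p'}$-norm, density of $C_c^\infty(\Omega)$ in $L^{p'}(\Omega)$ (this is exactly where $p>1$, i.e.\ $p'<\infty$, enters), the representation $(L^{p'}(\Omega))^*\cong L^p(\Omega)$ (with $\sigma$-finiteness of Lebesgue measure covering the endpoint $p'=1$, i.e.\ $p=\infty$), and the identification of the Riesz representative with the weak derivative, summing the bounds $\|g_k\|_{L^p}\le C_0$ over the finitely many derivatives to get $\|f\|_{W^{m,p}}\le C(m,C_0)$. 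One small point deserves to be made explicit: for $k=0$ the identity $\int_\Omega f\zeta\de x=\int_\Omega g_0\zeta\de x$ for all $\zeta\in C_c^\infty(\Omega)$ only says that $f-g_0\in L^1_{\mathrm{loc}}(\Omega)$ annihilates all test functions, and you need the fundamental lemma of the calculus of variations (du Bois--Reymond) to conclude $f=g_0$ almost everywhere, hence $f\in L^p(\Omega)$; for $k\ge 1$ the identity is, as you say, literally the definition of the weak derivative (with the sign $(-1)^k$ you record). With that gloss the argument is complete.
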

\begin{prop}[Regularity for critical point of $\E$]\label{propregularitycriticalpoint}
Suppose that $\gamma$ is a critical point of $\E$, then $\gamma$ is of class $C^\infty$. Moreover, for all $l \in \N$ there exists a constant $C_l= C_l(\| \gamma \|_{H^2} )$ such that
\be\label{gammaWl+2inftyestimate}
 \| \gamma \|_{W^{l+2,\infty}} \leq C_l(\| \gamma \|_{H^2} )\,.
\ee
\end{prop}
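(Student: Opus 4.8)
The plan is to run a bootstrap argument on the weak Euler--Lagrange equation, gaining one order of regularity for the curvature at each step. Since every quantity involved is geometric, I would first normalize $\gamma$ to be parametrized proportionally to arclength, so that $|\pax\gamma|\equiv \ell:=\ell(\gamma)$ is constant, $\pas=\ell^{-1}\pax$, and $\k=\ell^{-2}\pax^2\gamma$. With this normalization the hypothesis $\gamma\in H^2$ reads $\k\in L^2$, while $\tau=\ell^{-1}\pax\gamma$ satisfies $|\tau|\equiv1$, hence $\tau\in L^\infty\cap H^1$. Testing the first variation \eqref{primostep} against arbitrary $\psi\in C_c^\infty((0,1),\R^2)$ and rewriting it in the variable $x$ gives
\[
\frac{2}{\ell}\int_0^1\langle\k,\pax^2\psi\rangle\de x+\int_0^1\langle g,\pax\psi\rangle\de x=0,\qquad g:=(-3|\k|^2+\mu)\tau .
\]
Because $\k\in L^2$ and $\tau\in L^\infty$, the nonlinearity satisfies $g\in L^1$.

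The heart of the argument is the first gain of integrability. The identity above says exactly that $\pax^2\k=\tfrac{\ell}{2}\pax g$ in the sense of distributions, so $\pax\k-\tfrac{\ell}{2}g$ is a constant vector and therefore
\[
\pax\k=\tfrac{\ell}{2}\,g+c ,\qquad c\in\R^2 .
\]
Since $g\in L^1$, this yields $\k\in W^{1,1}(0,1)$, and the one--dimensional embedding $W^{1,1}\hookrightarrow C^0\subset L^\infty$ upgrades $\k$ to $L^\infty$; the quantitative bound can be read off from Lemma~\ref{LemmaAltDuality} applied componentwise to the displayed integral identity. Once $\k\in L^\infty$ we have $g\in L^\infty$, whence $\pax\k=\tfrac{\ell}{2}g+c\in L^\infty$ and $\k\in W^{1,\infty}$.

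From here the bootstrap is routine. Suppose $\k\in W^{l,\infty}$. Integrating $\pax\tau=\ell\,\k$ gives $\tau\in W^{l+1,\infty}$; since $g$ is a fixed polynomial expression in $\k$ and $\tau$ and $W^{l,\infty}(0,1)$ is a Banach algebra, we get $g\in W^{l,\infty}$ and hence $\pax\k=\tfrac{\ell}{2}g+c\in W^{l,\infty}$, i.e. $\k\in W^{l+1,\infty}$. Iterating, $\k\in W^{l,\infty}$ for every $l$, so $\k\in C^\infty$ and, via $\pax^2\gamma=\ell^2\k$, also $\gamma\in C^\infty$. The quantitative estimate \eqref{gammaWl+2inftyestimate} follows by keeping track of norms along this induction: at each stage $\|g\|_{W^{l,\infty}}$ is controlled by Leibniz' rule through a polynomial in $\|\k\|_{W^{l,\infty}}$ and $\|\tau\|_{W^{l+1,\infty}}$, while $\ell=\ell(\gamma)$ and $\|\k\|_{L^2}$ --- the only geometric data entering the constants --- are controlled by $\|\gamma\|_{H^2}$. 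I note that, because the relation $\pax\k=\tfrac\ell2 g+c$ holds on all of $(0,1)$ with right-hand side bounded up to the endpoints, no separate boundary analysis is needed: the regularity propagates to the closed interval $[0,1]$.

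The main obstacle is precisely the base step, i.e. passing from the borderline summability $g\in L^1$ to an $L^\infty$ bound on $\k$. This works only because the top-order part of the Euler--Lagrange operator is linear and the nonlinearity $g$ enters under a single $x$-derivative, so that one integration of the distributional identity immediately trades the $L^1$ datum for a $W^{1,1}$, hence continuous, curvature; this is where the structure of \eqref{primostep} together with the duality Lemma~\ref{LemmaAltDuality} do the real work. After this first jump, every subsequent step is a mechanical application of the algebra of $W^{l,\infty}$ spaces.
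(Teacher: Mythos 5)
Your proof is correct, but it takes a genuinely different route from the paper's. The paper runs the bootstrap entirely inside the weak formulation: it keeps the scalar curvature $k$, constructs explicit admissible test fields built from the unit normal (iterated primitives of $\eta\,\nu$ for the base step, and $\psi=\partial_s^{l-2}\eta\,\nu$ for the induction), derives duality estimates of the form $\vert\int_\gamma k\,\partial_s^l\eta\de s\vert\leq C(\|\gamma\|_{H^2})\|\eta\|_{L^1}$ for all $l\leq m$, and then invokes Lemma~\ref{LemmaAltDuality} with $p=\infty$ to conclude $\k\in W^{m,\infty}$ at each stage. You instead test only against fields in $C_c^\infty((0,1),\R^2)$ (which are admissible, since they trivially satisfy the constraint $\psi(0)_2=\psi(1)_2=0$), read criticality as the distributional ODE $\pax^2\k=\tfrac{\ell}{2}\pax g$, and integrate it once: the identity $\pax\k=\tfrac{\ell}{2}g+c$ together with the one-dimensional embedding $W^{1,1}(0,1)\hookrightarrow C^0([0,1])$ produces the key jump $\k\in L^\infty$, after which the Banach-algebra bootstrap is immediate and the duality lemma is never really needed. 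Your version is more elementary and exploits the 1D structure (the paper's lemma and test-function constructions are what one would need when the equation cannot simply be integrated, and they generalize beyond this setting); what your route costs is one step that you only gesture at in the quantitative part: the integration constant $c$ enters every subsequent bound, so for \eqref{gammaWl+2inftyestimate} it must itself be controlled by $\|\gamma\|_{H^2}$. This is easy to supply --- pairing the identity $\pax\k-\tfrac{\ell}{2}g=c$ with a fixed cutoff $\varphi\in C_c^\infty((0,1))$ with $\int_0^1\varphi\de x=1$ gives
\begin{equation}
c=-\int_0^1\k\,\pax\varphi\de x-\tfrac{\ell}{2}\int_0^1 g\,\varphi\de x\,,\qquad
\vert c\vert\leq C\left(\|\k\|_{L^2}+\ell\,\|g\|_{L^1}\right)\leq C(\|\gamma\|_{H^2})\,,
\end{equation}
but it should be stated, since otherwise the constants in your induction are not functions of $\|\gamma\|_{H^2}$ alone.
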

\begin{proof}
In order to show the regularity of a critical point of the elastic energy, we follow a bootstrap argument based on Lemma~\ref{LemmaAltDuality} (see~\cite{BraDolPlu21} for a similar proof).

Indeed, we prove that for any $m \in \N_0$, $\eta: [0,1] \to \R$ of class $C^\infty$ and $l\in \N_0$, $l \leq m$, we have
\be \label{assertion1}
\int_\gamma k \pa^l_s \eta \de s \leq C(\|\gamma \|_{H^2}) \| \eta \|_{L^1} \,.
\ee
Then, by Lemma~\ref{LemmaAltDuality} we conclude that $\k \in W^{m,\infty}$ and $\gamma \in W^{m+2, \infty}$, where $\k=k \nu$.

We start showing the assertion for $m=1$. We recall that, since $\gamma$ is a critical point of $\E$, it holds
\be\label{firstvar}
\int_{\gamma} 2 \langle \k, \partial^2_{s} \psi \rangle \de  s   
 + \int_{\gamma} (-3 |\boldsymbol{\kappa}|^2+\mu)
\left\langle \tau,\partial_s\psi\right\rangle \de  s=0
\ee
for all $\psi : [0,1] \to \R^2$ of class $H^2$ such that 
\be\label{condpsi}
\psi(0)_2=0 \qquad \text{and} \qquad \psi(1)_2=0\, .
\ee
Moreover, the fact that $\gamma \in H^2$ ensures that the $L^2$-norm of the curvature is bounded, that is
\be
\| \k \|_{L^1} \leq C \| \k \|_{L^2} \leq C( \| \gamma\|_{H^2}) \, .
\ee
We now denote by $F(\gamma, \psi)$ the second integral in~\eqref{firstvar}, so we have
\be \label{estimateF}
\vert F(\gamma, \psi) \vert \leq C (\| \k \|_{L^2}^2 + \mu \ell(\gamma) ) \| \partial_{s} \psi\|_{L^\infty} \leq C(\|\gamma \|_{H^2} ) \| \psi \| _{W^{2,1}} \, .
\ee 
In order to show the $L^\infty$-regularity of $\k$, we consider $\eta \in C^\infty$ and we use
\be \psi(x) = \ell(\gamma)^2\int_0^x \int_0^y  \eta (t) \nu(t) \de t \de y + \ell(\gamma)^2 x \int_0^1 \int_0^y  \eta (t) \nu(t) \de t \de y \ee
as test function in~\eqref{firstvar}. It clearly follows that $\psi \in H^2$ and $\partial^2_s \psi = \eta \nu$ (using the relation $\vert \gamma'(x) \vert = \ell(\gamma)$ for all $x \in [0,1])$.
Then, if we replace $\psi$ in~\eqref{firstvar} we have
\be \int_{\gamma} 2 \langle \k, \eta \nu \rangle \de s   = - F(\gamma , \psi) \ee 
for all $\eta \in C^\infty$. Hence, using the estimate~\eqref{estimateF}, we obtain
\be   \int_{\gamma} 2 \langle \k, \eta \nu \rangle  \de s=  \int_{\gamma} 2 k \eta \de s \leq C(\|\gamma \|_{H^2} ) \| \psi \| _{W^{2,1}} \leq  C(\|\gamma \|_{H^2} ) \| \eta \| _{L^1} \, , \ee 
and by Lemma~\ref{LemmaAltDuality}, we conclude that $\k \in L^\infty$ (that is $\gamma \in W^{2, \infty}$) and there exists a constant $C_0=C_0(\|\gamma \|_{H^2} )$ such that 
\be\label{Linfbound}
\| \k \| _{L^\infty}\leq C_0(\|\gamma \|_{H^2} )\, .\ee
Arguing in the same way, we want to show that $k \in W^{1,\infty}$.
For $\eta \in C^\infty$, we use
\be \psi(x)= \ell(\gamma) \int_0^x \eta (t) \nu (t) \de t+  \ell(\gamma) x \int_0^1 \eta (t) \nu (t) \de t\ee
as test function in~\eqref{firstvar}. So we have $\psi \in H^2$, $\pa_s \psi = \eta \nu$ and
\be \scal{\pa^2_s \psi,\nu}= \scal{\pa_s \eta \nu,\nu}= \pa_s \eta \, . \ee 
Then, relation~\eqref{firstvar} can be written as
\be \int_\gamma k \pa_s \eta \de s = \int_{\gamma} (3 |\k|^2-\mu)
\scal{ \tau,\partial_s\psi}\de s \leq C(\|\gamma \|_{H^2} ) \|\partial_s\psi \|_{L^1} \leq C(\|\gamma \|_{H^2} ) \|\eta \|_{L^1}
\ee
where we used the $L^\infty$-bound in~\eqref{Linfbound}. Then, by Lemma~\ref{LemmaAltDuality} it follows that $\k \in W^{1,\infty}$ (that is $\gamma \in W^{3,\infty})$ and there exists a constant $C_1=C_1(\|\gamma\|_{H^2})$ such that
\be\label{W1infbound}
\| \k \| _{W^{1,\infty}}\leq C_1(\|\gamma \|_{H^2} )\, .\ee
Once we show the assertion for $m=1$, we can suppose that $m\geq 2$ and that it holds for $m-1$. So, we only need to prove the estimate~\eqref{assertion1} for $l=m$.

For $\eta\in C^\infty$, we use $\psi = \partial_s^{l-2} \eta \nu$ as a test function in~\eqref{firstvar}. Hence, we have \begin{align*}
 \scal{\partial^2_{s}\varphi, \nu} & =  \scal{ \partial_{s}(\partial_s^{l-1} \eta \nu + \partial_s^{l-2}\eta \partial_s \nu), \nu} = \partial_{s}^l \eta + 2  \scal{ \partial_{s}^{l-1}\eta \partial_s \nu, \nu } +  \scal{ \partial_s^{l-2}\eta \partial^2_{s}\nu, \nu } \\ & = \partial_{s}^l \eta -  \scal{\partial_s^{l-2} \eta \partial_{s}(k \tau), \nu} = \partial_{s}^l \eta - k^2 \partial_s^{l-2}\eta \,.
\end{align*}
and
\be 
\scal{\partial_s \psi, \tau } = \scal{\partial_s^{l-1} \eta \nu +\partial_s^{l-2} \eta \partial_s \nu, \tau} = -k \partial_s^{l-2} \eta \, .
\ee
Replacing this relations in~\eqref{firstvar}, we obtain
\be
\int_\gamma k\partial_{s}^l \eta \de s = \int_\gamma  k^3 \partial_s^{l-2}\eta  \ds -  \int_{\gamma} k (3 k^2- \mu)\partial_s^{l-2} \eta  \de s \, .
\ee 
In view of the regularity already established, we may integrate by parts the terms involving derivatives of $\eta$ on the right-hand side and we obtain 
\be 
\int_\gamma k\partial_{s}^l \eta \de s \leq C(\|\gamma \|_{H^2}) \, .
\ee 
Since this estimate holds for all $l\leq m$, by Lemma~\ref{LemmaAltDuality}  we conclude that $\k \in W^{m, \infty}$ and there exists a constant $C_l=C_l(\|\gamma \|_{H^2})$ such that estimate~\eqref{gammaWl+2inftyestimate} holds.
\end{proof}

\section*{Acknowledgements}
The author is grateful to Alessandra Pluda and Carlo Mantegazza for several discussions and helpful suggestions.
\medskip \\The author is partially supported by the INdAM–GNAMPA 2022 Project {\em Isoperimetric problems:
variational and geometric
aspects}. 

\bibliographystyle{plain}
\bibliography{ElasticCurve}

\begin{thebibliography}{10}

\bibitem{AbBu19}
H.~Abels and J.~Butz.
\newblock Short time existence for the curve diffusion flow with a contact
  angle.
\newblock {\em J. Differential Equations}, 268(1):318--352, 2019.

\bibitem{AbBu20}
H.~Abels and J.~Butz.
\newblock A {blow--up} criterion for the curve diffusion flow with a contact
  angle.
\newblock {\em SIAM J. Math. Anal.}, 52(3):2592--2623, 2020.

\bibitem{AdamsFournier}
R.~A. Adams and J.~F. Fournier.
\newblock {\em Sobolev Spaces (second edition)}, volume 140 of {\em Pure and
  Appl. Math.}
\newblock Elsevier/Academic Press, Amsterdam, 2013.

\bibitem{AltFunctionalAnalysis2016}
Hans~Wilhelm Alt.
\newblock {\em Linear functional analysis}.
\newblock Universitext. Springer-Verlag London, Ltd., London, 2016.
\newblock Translated from the German edition by Robert N\"{u}rnberg.

\bibitem{angen3}
S.~Angenent.
\newblock Parabolic equations for curves on surfaces. {I}. {C}urves with
  {$p$}-integrable curvature.
\newblock {\em Ann. of Math. (2)}, 132(3):451--483, 1990.

\bibitem{angenent}
S.~B. Angenent.
\newblock Nonlinear analytic semiflows.
\newblock {\em Proc. Roy. Soc. Edinburgh Sect. A}, 115(1-2):91--107, 1990.

\bibitem{aubin0}
T.~Aubin.
\newblock {\em Some nonlinear problems in {R}iemannian geometry}.
\newblock Springer--Verlag, 1998.

\bibitem{BalBevLusMa}
F.~Ballarin, G.~Bevilacqua, L.~Lussardi, and A.~Marzocchi.
\newblock Elastic membranes spanning deformable boundaries.
\newblock Preprint: https://doi.org/10.48550/arXiv.2207.13614.

\bibitem{BevLusMar}
G.~Bevilacqua, L.~Lussardi, and A.~Marzocchi.
\newblock Variational analysis of inextensible elastic curves.
\newblock {\em Proc. R. Soc. A.}, 478, 2022.

\bibitem{BraDolPlu21}
C.~Brand, G.~Dolzmann, and A.~Pluda.
\newblock Variational models for the interaction of surfactants with curvature
  - existence and regularity of minimizers in the case of flexible curves.
\newblock {\em ZAMM Z. Angew. Math. Mech.}, 2021.

\bibitem{daprato-grisvard}
G.~Da~Prato and P.~Grisvard.
\newblock Equations d'\'{e}volution abstraites non lin\'{e}aires de type
  parabolique.
\newblock {\em Ann. Mat. Pura Appl. (4)}, 120:329--396, 1979.

\bibitem{dalipo1}
A.~Dall'Acqua, C.-C. Lin, and P.~Pozzi.
\newblock Evolution of open elastic curves in $\mathbb{R}^n$ subject to fixed
  length and natural boundary conditions.
\newblock {\em Analysis (Berlin)}, 34(2):209--222, 2014.

\bibitem{dalipo2}
A.~Dall'Acqua, C.-C. Lin, and P.~Pozzi.
\newblock A gradient flow for open elastic curves with fixed length and clamped
  ends.
\newblock {\em Ann. Sc. Norm. Super. Pisa Cl. Sci.}, 17(3):1031--1066, 2017.

\bibitem{DaChPo19}
A.~Dall'Acqua, C.-C. Lin, and P.~Pozzi.
\newblock Elastic flow of networks: long--time existence result.
\newblock {\em Geom. Flows}, 4(1):83--136, 2019.

\bibitem{DaPo14}
A.~Dall'Acqua and P.~Pozzi.
\newblock A {W}illmore--{H}elfrich {$L^2$}-flow of curves with natural boundary
  conditions.
\newblock {\em Comm. Anal. Geom.}, 22(4):617--669, 2014.

\bibitem{DaPoSp16}
A.~Dall'Acqua, P.~Pozzi, and A.~Spener.
\newblock The {L}ojasiewicz--{S}imon gradient inequality for open elastic
  curves.
\newblock {\em J. Differential Equations}, 261(3):2168--2209, 2016.

\bibitem{deturck}
D.~M. DeTurck.
\newblock Deforming metrics in the direction of their {R}icci tensors.
\newblock {\em J. Diff. Geom.}, 18(1):157--162, 1983.

\bibitem{DjHaMlVa}
P.A. Djondjorov, M.T. Hadzhilazova, I.M. Mladenov, and V.M. Vassilev.
\newblock Explicit parameterization of euler’s elastica.
\newblock {\em Geom. Integrability e Quantization}, 9:175--186, 2008.

\bibitem{kuschatdz}
G.~Dziuk, E.~Kuwert, and R.~Sch\"atzle.
\newblock Evolution of elastic curves in {$\mathbb R\sp n$}: existence and
  computation.
\newblock {\em SIAM J. Math. Anal.}, 33(5):1228--1245 (electronic), 2002.

\bibitem{garkcoh}
H.~Garcke and A.~N. Cohen.
\newblock A singular limit for a system of degenerate {C}ahn-{H}illiard
  equations.
\newblock {\em Advances in Differential Equations}, 5(4-6):401--434, 2000.

\bibitem{GaMePl1}
H.~Garcke, J.~Menzel, and A.~Pluda.
\newblock Willmore flow of planar networks.
\newblock {\em J. Differential Equations}, 266(4):2019--2051, 2019.

\bibitem{GaMePl2}
H.~Garcke, J.~Menzel, and A.~Pluda.
\newblock Long time existence of solutions to an elastic flow of networks.
\newblock {\em Comm. Partial Differential Equations}, 45(10):1253--1305, 2020.

\bibitem{GarcNovic}
H.~Garcke and A.~Novick-Cohen.
\newblock A singular limit for a system of degenerate {C}ahn--{H}illiard
  equations.
\newblock {\em Adv. Differential Equations}, 5:401 -- 434, 2000.

\bibitem{GoMePlu}
M.~G{\"o}sswein, J.~Menzel, and A.~Pluda.
\newblock Existence and uniqueness of the motion by curvature of regular
  networks.
\newblock {\em Interfaces Free Bound}, 25:109–154, 2023.

\bibitem{Koiso96}
N.~Koiso.
\newblock On the motion of a curve towards elastica.
\newblock In {\em Actes de la {T}able {R}onde de {G}\'{e}om\'{e}trie
  {D}iff\'{e}rentielle ({L}uminy, 1992)}, volume~1 of {\em S\'{e}min. Congr.},
  pages 403--436. Soc. Math. France, Paris, 1996.

\bibitem{langersinger1}
J.~Langer and D.A. Singer.
\newblock The total squared curvature of closed curves.
\newblock {\em Journal of Differential Geometry}, 20(1):1--22, 1984.

\bibitem{langersinger2}
J.~Langer and D.A. Singer.
\newblock Curve straightening and a minimax argument for closed elastic curves.
\newblock {\em Topology}, 24(1):75--88, 1985.

\bibitem{langersinger3}
J.~Langer and D.A. Singer.
\newblock Lagrangian aspects of the kirchhoff elastic rod.
\newblock {\em SIAM Rev. 38}, pages 605--618, 1996.

\bibitem{Ch12}
C.-C. Lin.
\newblock {$L^2$}-flow of elastic curves with clamped boundary conditions.
\newblock {\em J. Differential Equations}, 252(12):6414--6428, 2012.

\bibitem{linner}
A.~Linn\'er.
\newblock Some properties of the curve straightening flow in the plane.
\newblock {\em Trans. Am. Math. Soc.}, 314(2):605--618, 1989.

\bibitem{mant5}
C.~Mantegazza.
\newblock Smooth geometric evolutions of hypersurfaces.
\newblock {\em Geom. Funct. Anal.}, 12(1):138--182, 2002.

\bibitem{ManPluPozSurvey}
C.~Mantegazza, A.~Pluda, and M.~Pozzetta.
\newblock A survey of the elastic flow of curves and networks.
\newblock {\em Milan J. Math.}, 89(1):59--121, 2021.

\bibitem{ManPoz}
C.~Mantegazza and M.~Pozzetta.
\newblock The {L}ojasiewicz{--S}imon inequality for the elastic flow.
\newblock {\em Calc. Var}, 60(1):Paper n.56, 17 pp, 2021.

\bibitem{McWhWu19}
J.~McCoy, G.~Wheeler, and Y.~Wu.
\newblock Evolution of closed curves by length--constrained curve diffusion.
\newblock {\em Proc. Amer. Math. Soc.}, 147(8):3493--3506, 2019.

\bibitem{McWhWu17}
J.~McCoy, G.~Wheeler, and Y.~Wu.
\newblock A sixth order curvature flow of plane curves with boundary
  conditions.
\newblock In {\em 2017 {MATRIX} annals}, volume~2 of {\em MATRIX Book Ser.},
  pages 213--221. Springer, Cham, 2019.

\bibitem{MiYo22-2}
T.~Miura and K.~Yoshizawa.
\newblock Complete classification of planar $p$--elasticae.
\newblock Preprint: https://doi.org/10.48550/arXiv.2203.08535, 2022.

\bibitem{MiYo22}
T.~Miura and K.~Yoshizawa.
\newblock Pinned planar $p$--elasticae.
\newblock Preprint: https://doi.org/10.48550/arXiv.2209.05721, 2022.

\bibitem{MiYo23}
T.~Miura and K.~Yoshizawa.
\newblock General rigidity principles for stable and minimal elastic curves.
\newblock Preprint: https://doi.org/10.48550/arXiv.2301.08384, 2023.

\bibitem{NoOk14}
M.~Novaga and S.~Okabe.
\newblock Curve shortening--straightening flow for non--closed planar curves
  with infinite length.
\newblock {\em J. Differential Equations}, 256(3):1093--1132, 2014.

\bibitem{NoOk17}
M.~Novaga and S.~Okabe.
\newblock Convergence to equilibrium of gradient flows defined on planar
  curves.
\newblock {\em J. Reine Angew. Math.}, 733:87--119, 2017.

\bibitem{Ok07}
S.~Okabe.
\newblock The motion of elastic planar closed curves under the area--preserving
  condition.
\newblock {\em Indiana Univ. Math. J.}, 56(4):1871--1912, 2007.

\bibitem{Ok08}
S.~Okabe.
\newblock The dynamics of elastic closed curves under uniform high pressure.
\newblock {\em Calc. Var. Partial Differential Equations}, 33(4):493--521,
  2008.

\bibitem{polden2}
A.~Polden.
\newblock {\em Curves and {S}urfaces of {L}east {T}otal {C}urvature and
  {F}ourth--{O}rder {F}lows}.
\newblock PhD thesis, Mathematisches Institut, Univ. T\"ubingen, 1996.
\newblock Arbeitsbereich Analysis Preprint Server -- Univ. T\"ubingen,
  http:/$\!\!$/$\!$poincare.mathematik.uni-tuebingen.de/mozilla/home.e.html.

\bibitem{Poz22}
M~Pozzetta.
\newblock Convergence of elastic flows of curves into manifolds.
\newblock {\em Nonlinear Analysis}, 214:112581, 2022.

\bibitem{Rupp2020ExistenceAC}
F.~Rupp and A.~Spener.
\newblock Existence and convergence of the length-preserving elastic flow of
  clamped curves.
\newblock {\em arXiv: Analysis of PDEs}, 2020.

\bibitem{eidelman2}
N.~V.~Zhitarashu S.~D.~Eidelman.
\newblock {\em Parabolic boundary value problems}.
\newblock Operator Theory: Advances and Applications. Birkh\"auser Basel, 2012.
\newblock Translated from the Russian original by Gennady Pasechnik and Andrei
  Iacob.

\bibitem{solonnikov1}
V.~A. Solonnikov.
\newblock {\em Boundary value problems of mathematical physics. {III}}.
\newblock Amer. Math. Soc., Providence, R.I., 1967.

\bibitem{Sp17}
A.~Spener.
\newblock Short time existence for the elastic flow of clamped curves.
\newblock {\em Math. Nachr.}, 290(13):2052--2077, 2017.

\bibitem{Truesdell83}
C.~Truesdell.
\newblock The influence of elasticity on analysis: the classic heritage.
\newblock {\em Bull. Am. Math. Soc.}, 9:293--310, 1983.

\bibitem{wen2}
Y.~Wen.
\newblock Curve straightening flow deforms closed plane curves with nonzero
  rotation number to circles.
\newblock {\em J. Diff. Eqs.}, 120:89--107, 1995.

\bibitem{Wh15}
G.~Wheeler.
\newblock Global analysis of the generalised {H}elfrich flow of closed curves
  immersed in {$\mathbb{R}^n$}.
\newblock {\em Trans. Amer. Math. Soc.}, 367(4):2263--2300, 2015.

\bibitem{WhWh}
G.~Wheeler and V.-M. Wheeler.
\newblock Curve diffusion and straightening flows on parallel lines.
\newblock Preprint: \href{https://arxiv.org/abs/1703.10711}{arXiv:1703.10711},
  2017.

\end{thebibliography}
\end{document}